\tikzset{
	subset/.style={
		draw=none,
		edge node={node [sloped, allow upside down, auto=false]{$\subset$}}},
	Subset/.style={
		draw=none,
		every to/.append style={
			edge node={node [sloped, allow upside down, auto=false]{$\subset$}}}
	}
}
\tikzset{
	labl/.style={anchor=south, rotate=90, inner sep=.50mm}
}
\newcommand{\erre}{\mathds{R}}
\newcommand{\ricc}{\operatorname{Ric}}
\newcommand{\weyl}{\operatorname{W}}
\newcommand{\diver}{\operatorname{div}}
\newcommand{\KN}{\mathbin{\bigcirc\mspace{-15mu}\wedge\mspace{3mu}}}
\newcommand{\ra}{\rightarrow}
\newcommand{\Longra}{\Longrightarrow}
\newcommand{\set}[1]{{\left\{#1\right\}}}               
\newcommand{\pa}[1]{{\left(#1\right)}}                  
\newcommand{\sq}[1]{{\left[#1\right]}}                  
\newcommand{\abs}[1]{{\left|#1\right|}}                 
\newcommand{\eps}{\varepsilon}                           
\newcommand{\ol}[1]{\overline{#1}}
\renewcommand{\hat}[1]{\widehat{#1}}
\renewcommand{\tilde}[1]{\widetilde{#1}}
\newcommand{\riem}{\mathrm{Riem}}
\newcommand{\ric}{\mathrm{Ric}}
\newcommand{\boc}{\operatorname{B}}
\newcommand{\pyamw}{\frac{||W||_{L^{p}}}{Y(M,[g])}}
\newtheorem{theorem}{\textbf{Theorem}}[section]
\newtheorem{cor}[theorem]{\textbf{Corollary}}
\newtheorem{defi}[theorem]{\textbf{Definition}}
\theoremstyle{remark}
\newtheorem{rem}[theorem]{\textbf{Remark}}
\numberwithin{equation}{section}
\title[]
{Some isolation and stability results for Einstein manifolds}
\author[L. Branca]{Letizia Branca}
\address[Letizia Branca]{Dipartimento di Matematica, Universit\`{a} degli Studi di Milano, Via Saldini 50, 20133, Milano, Italy.}
\email[]{letizia.branca@unimi.it}
\author[K. Kr\"{o}ncke]{Klaus Kr\"{o}ncke}
\address[Klaus Kr\"{o}ncke]{Institutionen f\"{o}r Matematik, KTH Stockholm, Lindstedtsv\"{a}gen 25, 10044
	Stockholm, Sweden}
\email[]{kroncke@kth.se}
\date{}
\keywords{}
\subjclass[2010]{}
\begin{document}
	\maketitle
	\date{\today}
	\noindent
	\begin{abstract} 
		We prove new isolation and stability results for Einstein manifolds in a variety of settings. Imposing conditions on the Weyl tensor, we establish new stability criteria for compact, asymptotically hyperbolic (AH) and asymptotically locally Euclidean (ALE) manifolds and an isolation result in the latter setting. For compact K\"{a}hler and Sasaki $\eta$-Einstein manifolds, we provide similar results which involve the Bochner tensor and the contact Bochner tensor, respectively.		
	\end{abstract}
	\section{Introduction and main results}
	One of the most important problems in Riemannian geometry and Geometric analysis is the study of the so-called \emph{canonical} (or \emph{special}) metrics. Many of these metrics are related to curvature conditions. Indeed, a well known example is given by \emph{Einstein} metrics, which are those metrics whose Ricci tensor (defined as the trace of the Riemann curvature tensor) is proportional to the metric itself, namely
	\[\ric=\lambda g,\]
	$\lambda \in \erre$. These metrics arise from physics, since they are solutions to the vacuum Einstein field equations; it is known that when $n=2,3$, a manifold is Einstein if and only if it has constant sectional curvature. In dimension four
	the existence of Einstein metrics requires some strong topological conditions, indeed on a compact four-dimensional Einstein manifold the Gauss-Bonnet formula forces the Euler characteristic to be non-negative and combined with the signature formula it guarantees the validity of the Hitchin–Thorpe inequality, 
	\[\chi(M)\geq\frac{3}{2}\abs{\tau(M)},\]
	where $\chi(M)$ and $\tau(M)$ are the Euler characteristic and the signature of $M$, respectively. Note that also in the non-compact case, with specified
	asymptotic geometry at infinity, we have the validity of an analogous inequality (\cite{DaiWei}). We point out that for higher dimensional Einstein manifolds, to date, not much is known. When $n\geq3$
	the Riemann curvature tensor $\operatorname{Riem}$ admits the
	decomposition:
	\[
	\operatorname{Riem}=\weyl+\dfrac{1}{n-2}
	\ricc\KN g
	-\dfrac{S}{2(n-1)(n-2)}g\KN g,
	\]
	where $\weyl$ is the \emph{Weyl} tensor,
	$\ricc$ is the \emph{Ricci tensor} and
	$S$ is the \emph{scalar curvature}. The Weyl tensor is conformally invariant and it measures the conformal flatness. In \cite{KobayashyConf}, Kobayashi introduced a conformally invariant functional on the space of metrics of a compact manifold of dimension $n$, namely the Weyl functional, defined as
	\[\mathfrak{W}(g):=\int_M\abs{\weyl}^{\frac{n}{2}}dV_g.\]
	It has been shown that, on four-dimensional compact Riemannian manifolds, the Weyl functional satisfies some interesting topological lower bounds: Gursky (\cite{Gursky98}) proved that when the Yamabe invariant of $M$ is positive and the intersection form has at least one positive eigenvalue, the self-dual part of the Weyl functional is greater than or equal to a quantity depending on $\chi(M)$ and $\tau(M)$. A similar result was obtained in the case of negative Yamabe invariant, provided the existence of a non-trivial conformal vector field (\cite{Gursky99}); later, LeBrun proved, without any assumption on the sign of the Yamabe invariant, that the same inequality was obtained by conformal classes of symplectic type on a Del Pezzo surface (\cite{LeBrun2015}). Note that, while all these results were obtained on compact Riemannian manifolds, some rigidity results have been proved on compact manifolds of dimension four with boundary (see, for instance \cite{CatinoNdiaye}). 
	As far as higher
	dimensional cases are concerned, many authors managed to prove that, on an Einstein manifold with positive scalar curvature, if the Weyl functional satisfies a pinching condition (\cite{BCDM,CMbook,FX17,HebeyVaugon,Singer}) then $(M,g)$ is a quotient of the standard sphere.
	In dimension $n=4$, an improvement of this result is due to Gursky and LeBrun (\cite{Gursky200,GurskyLeBrun}), namely on a four-dimensional Einstein manifold with positive scalar curvature if
	\[\int_M\abs{\weyl}^2dV_g\leq\frac{1}{6}\int_M S^2dV_g,\]
	then $(M,g)$ is isometric to $\mathbb{S}^4$, $\erre\mathbb{P}^4$ or $\mathbb{CP}^2$ with their standard metric. The result is sharp, since equality is achieved by $\mathbb{CP}^2$ endowed with the Fubini-Study metric. 
	We point out that the optimal result was proven by Gursky and LeBrun (\cite{Gursky200,GurskyLeBrun}) exploiting the peculiarity of dimension $4$ and making use of a modified Yamabe invariant: they proved that
	given $(M^4,g)$ an Einstein manifold with positive Yamabe invariant and $\abs{\weyl}\neq0$, 
	\[\int_M\abs{\weyl^+}^2dV_g\geq\frac{1}{6}\int_M S^2dV_g,\]
	and equality holds if and only if $\nabla \weyl^+=0$ (note that the same result holds if we replace $\weyl^+$ with $\weyl^-$). Moreover both $\mathbb{CP}^2$ with the standard orientation and the Fubini-Study metric and $\mathbb{S}^2\times\mathbb{S}^2$ with the product metric realize the equality. More precisely, by Cartan's classification of irreducible symmetric spaces (\cite{cartan1,cartan2}) equality is obtained only by these two spaces, up to quotients. In \cite{BCDM}, optimal pinching results for compact $n$-dimensional Einstein manifolds with positive Yamabe invariant were obtained adapting the technique due to Gursky and LeBrun.
	\\
	It seems thus natural to ask whether it is possible to find classification results also in the non-compact case. In particular, one of the aims of this article is to provide isolation results for the Weyl functional on some specific classes of non-compact Einstein manifolds, more precisely: Ricci-flat \emph{asymptotically locally Euclidean} (ALE) manifolds.
	
Before we state the first theorem, let us remark that every odd-dimensional Ricci-flat ALE manifold is flat. This is because in that case, the fundamental group at infinity is trivial and flatness follows from the Bishop-Gromov volume comparison theorem.		
	Our first result is as follows:
		\begin{theorem}\label{isolation thm intro}
		Let $(M,g)$ be a Ricci-flat ALE manifold. Then, either $(M,g)$ is flat or,
		\begin{align*}
			||\weyl||_{L^\frac{n}{2}}\geq B(n)\set{\frac{1}{\sqrt{\pi n(n-2)}}\pa{\frac{\Gamma(n)}{\Gamma\pa{\frac{n}{2}}}}^{\frac{1}{n}}\abs{\Gamma}^{\frac{1}{n}}}^{-2}C(n)^{-1},
		\end{align*}
where $\abs{\Gamma}$ denotes the order of the group acting at infinity (see Definition \ref{d-ale} for more details) and the constants $B(n)$ and $C(n)$ are defined as
\begin{align*}
B(4)&=\frac{4}{3},\qquad B(n)=\frac{1}{2\pa{2-\frac{n+1}{n-1}}} \text{ for }n\geq 6,\\
C(4)=\frac{\sqrt{6}}{4},\qquad
C(6)&=\frac{\sqrt{70}}{2\sqrt{3}},\qquad C(n)=2+\frac{1}{2}\frac{n(n-1)-4}{\sqrt{n(n-1)(n+1)(n-2)}},\text{ for }n\geq 8.
\end{align*}
	\end{theorem}
\begin{rem}	
A fundamental tool we use in this paper is the classical Bochner-Weitzenb\"{o}ck formula for the Weyl tensor, holding on every Einstein manifold; that is
	\[\frac{1}{2}\Delta \abs{\weyl}^2=\abs{\nabla\weyl}^2+\frac{2}{n}S\abs{\weyl}^2-2Q,\]
	where $Q$ is a cubic term in Weyl: $Q=
	2W_{pqrs}W_{ptru}W_{qtsu}+\frac{1}{2}
	W_{pqrs}W_{pqtu}W_{rstu}$. The constant $C(n)$ in the above theorem is such that $|Q|\leq C(n)|W|^3$. Note that due to that identity, $\weyl$ does not vanish on an open neighbourhood unless it is identically zero.
\end{rem}
	
	It is known that Einstein metrics on a compact manifold can also be regarded as critical points of the  Einstein-Hilbert functional
	\[\mathfrak{S}(g)=\int_M SdV_g.\] 
	when restricted to a subset of the set of smooth metrics consisting in metrics of constant volume $c$ ($\mathcal{M}_c\subset\mathcal{M}$).
	It is known that Einstein metrics are neither local maxima nor minima of the Einstein-Hilbert functional on $\mathcal{M}_c$ (\cite{Muto}). However, it is possible to introduce a notion of stability, which is strictly related to the so-called Einstein operator, $\Delta_E$. Many Einstein manifolds are known to be stable, such as the round sphere, the hyperbolic space and their quotients, the Euclidean space, $\mathbb{CP}^n$, Spin manifolds admitting a nonzero parallel spinor, K\"{a}hler-Einstein manifolds with non-positive scalar curvature and quaternion-K\"{a}hler manifolds of negative scalar curvature (\cite{CaoHe,DaiWangWei,DaiWangWei2007,Koiso80,kroencke2024stabilityscalarcurvaturerigidity,Wang91}). On the other hand, many examples of unstable Einstein manifolds have been constructed (see \cite{Bohm,GibbonsHartnoll,GibbonsHartnollPope,GubserMitra,PagePope1,PagePope2} and note that these examples are of positive scalar curvature). In the non-compact case there are examples of unstable Ricci-flat metrics and unstable Einstein manifolds with negative scalar curvature (\cite{Gross,HallHaslhoferSiepmann,Warnick}); in particular in \cite{Kroncke17}, the second author managed to provide new examples of stable and unstable Einstein metrics, \emph{via} a warped product construction. In \cite{Kroncke15}, some stability criteria involving the Weyl (and Bochner) curvature were obtained by the second author; we establish a stability criterion (involving the Weyl curvature) for two classes of manifolds in the non-compact case: ALE manifolds and \emph{asymptotically hyperbolic} (AH) manifolds which are Einstein (also known as \emph{Poincaré-Einstein} (PE) manifolds), respectively. Moreover, we extend the stability results in \cite{Kroncke15} for compact Einstein (and K\"{a}hler-Einstein) manifolds, providing a criterion involving the $L^p$, $p\in[\frac{n}{2},\infty]$, norm of the Weyl (Bochner) tensor.
	
	We point out that in stability criteria involving the Weyl tensor, we are going to use the following fact: the Weyl tensor defines a symmetric trace free operator on the space of symmetric two tensors,
	\[\mathscr{W}: \Gamma(S^2M)\ra \Gamma(S^2M), \quad h_{ik}\mapsto W_{ijkl}h_{ik}.\]
	This space has dimension $\frac{n(n+1)}{2}$, therefore denoting $\lambda$ the largest eigenvalue of $\weyl$, we have that for every symmetric $(0,2)$-tensor $h$ with vanishing divergence the following inequality holds: 
	\begin{align}\label{ineq weyl h point}
		W_{ijkl}h_{ik}h_{jl}\leq\lambda\abs{h}^2\leq s(n)\abs{\weyl}\abs{h}^2,
	\end{align}
	where
	\[s(n)=\sqrt{\frac{(n+2)(n-1)}{n(n+1)}}\]	
	(see \cite[Lemma 2.4]{Huisken} for more details).
	
Again for ALE manifolds, we prove the following:
	\begin{theorem}\label{stability thm intro}
		Let $(M,g)$ be a Ricci-flat ALE Riemannian manifold of dimension $n\geq 6$. If 
		\begin{align}\label{stability}
			||\weyl||_{L^{\frac{n}{2}}}\leq\frac{1}{2}\set{\frac{1}{\sqrt{\pi n(n-2)}}\pa{\frac{\Gamma(n)}{\Gamma\pa{\frac{n}{2}}}}^{\frac{1}{n}}\abs{\Gamma}^{\frac{1}{n}}}^{-2}s(n)^{-1},
		\end{align}
		then $g$ is stable. 
	\end{theorem}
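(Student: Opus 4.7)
The plan is to reduce stability of $(M,g)$ to a coercivity estimate for the Einstein operator $\Delta_E$ on transverse–traceless (TT) tensors, and then to control the resulting curvature term by combining the pointwise Weyl estimate \eqref{ineq weyl h point} with Hölder's inequality and the Sobolev inequality on ALE manifolds. The curvature hypothesis is designed precisely so that the combined constants produce a non-negative Dirichlet form.

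First I would recall that on a Ricci-flat background the Einstein operator acts on a TT symmetric $2$-tensor $h$ by
\[
\Delta_E h = \nabla^*\nabla h - 2\mathring{R}h, \qquad (\mathring{R}h)_{ij} = R_{ikjl}\,h_{kl} = W_{ikjl}\,h_{kl},
\]
since $\ric \equiv 0$ forces $\riem = \weyl$. Stability then amounts to non-negativity of
\[
\mathcal{Q}(h)=\int_M \pair{\Delta_E h,h}\,dV_g = \int_M |\nabla h|^2\,dV_g - 2\int_M W_{ikjl}h_{ij}h_{kl}\,dV_g,
\]
for every TT tensor $h$ in the appropriate weighted Sobolev class; the integration by parts is legitimate on ALE manifolds thanks to the decay of $h$ at infinity, and may be justified by a cut-off argument if necessary.

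Next, I would bound the curvature term pointwise via \eqref{ineq weyl h point}:
\[
2\int_M W_{ikjl}h_{ij}h_{kl}\,dV_g \leq 2\,s(n)\int_M |\weyl|\,|h|^2\,dV_g,
\]
then apply Hölder's inequality with conjugate exponents $n/2$ and $n/(n-2)$ to obtain
\[
\int_M |\weyl|\,|h|^2\,dV_g \leq \|\weyl\|_{L^{n/2}}\,\|h\|_{L^{2n/(n-2)}}^{2}.
\]
The final ingredient is the sharp Sobolev inequality on ALE Riemannian manifolds asymptotic to $\erren/\Gamma$, which provides
\[
\|h\|_{L^{2n/(n-2)}}^{2} \leq K_{\Gamma}^{2}\,\|\nabla h\|_{L^{2}}^{2},\qquad K_{\Gamma} := \frac{1}{\sqrt{\pi n(n-2)}}\pa{\frac{\Gamma(n)}{\Gamma(n/2)}}^{1/n}\frac{1}{|\Gamma|}.
\]
Assembling the three bounds yields
\[
\mathcal{Q}(h) \geq \bigl(1 - 2\,s(n)\,K_{\Gamma}^{2}\,\|\weyl\|_{L^{n/2}}\bigr)\int_M |\nabla h|^2\,dV_g,
\]
so hypothesis \eqref{stability} gives $\mathcal{Q}(h)\geq 0$; with strict inequality the coefficient in front of $\int |\nabla h|^2$ is strictly positive, providing strict stability.

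The main obstacle I expect is twofold. First, one must work with the correct functional setting: TT tensors on an ALE manifold with enough weighted decay so that the integration by parts is valid and so that the Sobolev inequality applies to tensor fields (typically one reduces to the scalar inequality via Kato's inequality or by applying Sobolev componentwise to $|h|$). Second, one must invoke the precise ALE Sobolev constant appearing in $K_{\Gamma}$; this is a non-trivial input and is where the dimensional threshold $n\geq 6$ is likely to enter, either through the validity of the sharp Sobolev constant (Hebey–Vaugon type results) or to ensure that the right-hand side of \eqref{stability} lies below natural topological obstructions so that non-flat examples are not excluded by the hypothesis alone.
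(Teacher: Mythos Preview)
Your proof is correct and follows the same route as the paper: the pointwise bound \eqref{ineq weyl h point}, H\"older with exponents $n/2$ and $n/(n-2)$, Kato's inequality $|\nabla|h||\leq|\nabla h|$ to pass from tensors to scalars, and the sharp Sobolev inequality (with constant $D(n)=K_{\Gamma}$) combine exactly as you wrote. One clarification on your closing speculation: the restriction $n\geq 6$ plays no role in the argument itself---the paper remarks that the proof goes through for $n\geq 4$, and the threshold is imposed only because for $n=4,5$ the isolation bound of Theorem~\ref{isolation thm intro} renders hypothesis \eqref{stability} vacuous.
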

	We point out that, thanks to the Gauss-Bonnet formula on even dimensional manifolds with boundary, when $n=6$, it is possible to deduce a stability criterion involving the Euler characteristic, the integral of $\mathrm{tr}(\weyl^3)$ and a boundary term (which we will denote as $I(\mathbb{S}^5/\Gamma)$, for more details see Subection \ref{s-alesix})
	\begin{theorem}\label{t-ec}
		Let $(M,g)$ be a Ricci-flat ALE Riemannian manifold of dimension $n=6$, if
		\begin{align}\label{t-eulc}
			6\int_M\mathrm{tr}(\weyl^3)dV_g+I(\mathbb{S}^5/\Gamma)-384\pi^3\chi(M)\leq \frac{1}{\abs{\Gamma}}\frac{2^2\cdot 3^3\cdot7^2}{5^3}\pi^3
		\end{align}
		then $(M,g)$ is stable. 
	\end{theorem}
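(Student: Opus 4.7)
The plan is to reduce Theorem \ref{t-ec} to Theorem \ref{stability thm intro} with $n=6$ via the Chern--Gauss--Bonnet formula in dimension six. On a closed oriented $6$-manifold the Gauss--Bonnet integrand is the Pfaffian of $\riem$, a polynomial of degree three in the curvature; for a Ricci-flat metric $\riem=\weyl$, so this integrand becomes a cubic scalar invariant of the Weyl tensor alone. On an ALE manifold the classical identity is corrected by a transgression at infinity, which is precisely $I(\mathbb{S}^{5}/\Gamma)$, plus an explicit volume-defect contribution proportional to $1/\abs{\Gamma}$ coming from the model end $\erre^{6}/\Gamma$.

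Next one has to identify the cubic Weyl invariant produced by the Pfaffian. In dimension six the relevant combination takes the form
\[
 E_{6}(\weyl) \;=\; 6\,\mathrm{tr}(\weyl^{3}) + \kappa\, Q(\weyl),
\]
where $\mathrm{tr}(\weyl^{3})$ denotes the trace of the cube of $\weyl$ regarded as an endomorphism of $\Lambda^{2}T^{*}M$, and $Q(\weyl)$ is the remaining independent cubic contraction of a Weyl-type tensor. The coefficient $6$ in front of $\mathrm{tr}(\weyl^{3})$ is exactly what matches the factor $6$ in (\ref{t-eulc}). A pointwise Huisken-type estimate in the spirit of (\ref{ineq weyl h point}) then provides $\abs{Q(\weyl)}\le c\,\abs{\weyl}^{3}$ for an explicit constant $c=c(6)$.

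Integrating $E_{6}(\weyl)$ over $M$ and invoking Gauss--Bonnet, the identity rearranges to
\[
 6\int_{M}\mathrm{tr}(\weyl^{3})\,dV_{g}+I(\mathbb{S}^{5}/\Gamma)-384\pi^{3}\chi(M)\;=\;-\kappa\int_{M}Q(\weyl)\,dV_{g}-\frac{C_{0}}{\abs{\Gamma}},
\]
so the hypothesis (\ref{t-eulc}) bounds $\norm{\weyl}_{L^{3}}^{3}$ above by a threshold that — after accounting for the Huisken constant $c$ and the factor $s(6)=\sqrt{14/15}$ — coincides with the cube of the stability bound (\ref{stability}) for $n=6$. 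Theorem \ref{stability thm intro} then yields stability, with strict stability inherited from strict inequality in the hypothesis.

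The main obstacle is the algebraic computation of the dimension-six Pfaffian on a Ricci-flat metric — identifying the two independent cubic Weyl invariants, fixing $\kappa$ and the Huisken constant $c$, and correctly tracking the Gauss--Bonnet normalization that produces $384\pi^{3}$ and $\tfrac{6^{3}4^{2}}{5}\pi^{3}$ — together with the analytic work of matching the ALE transgression to $I(\mathbb{S}^{5}/\Gamma)$ and the model-end defect to $C_{0}/\abs{\Gamma}$.
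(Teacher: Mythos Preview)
Your reduction to Theorem~\ref{stability thm intro} is the right target, but the mechanism you propose does not work, and it differs from the paper in the one place that matters.

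You treat the six-dimensional Gauss--Bonnet integrand on a Ricci-flat manifold as a sum of two cubic Weyl scalars, $6\,\mathrm{tr}(\weyl^{3})+\kappa\,Q(\weyl)$, and then plan to control the extra term by the Huisken-type bound $\abs{Q(\weyl)}\le c\abs{\weyl}^{3}$. This cannot close. From the hypothesis you would only get an upper bound on $\int_{M}Q(\weyl)\,dV_{g}$; a pointwise two-sided estimate $\abs{Q}\le c\abs{\weyl}^{3}$ does \emph{not} convert that into an upper bound on $\norm{\weyl}_{L^{3}}$. There is no inequality $Q\ge c'\abs{\weyl}^{3}$ available to run the argument in the direction you need.

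The paper instead uses the form of the Gauss--Bonnet formula on an Einstein $6$-manifold in which the ``second'' cubic invariant has been traded (via the Bochner--Weitzenb\"{o}ck identity for $\weyl$) for a gradient term:
\[
384\pi^{3}\chi(M)=\int_{M}\Big(-2\abs{\nabla\weyl}^{2}+6\,\mathrm{tr}(\weyl^{3})\Big)\,dV_{g}+I(\mathbb{S}^{5}/\Gamma)
\]
in the Ricci-flat ALE case. The point is that $\norm{\nabla\weyl}_{L^{2}}^{2}$ is coercive: refined Kato plus the sharp Sobolev inequality~\eqref{sobolev} give
\[
2\norm{\nabla\weyl}_{L^{2}}^{2}\ \ge\ \frac{14}{5}\,D(6)^{-2}\,\norm{\weyl}_{L^{3}}^{2},
\]
so the hypothesis~\eqref{t-eulc} yields an upper bound on $\norm{\weyl}_{L^{3}}^{2}$ (note the exponent $2$, not $3$), which is exactly what is compared to the threshold in~\eqref{stability} for $n=6$. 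Your scheme produces neither the gradient term nor the correct exponent.

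A second inaccuracy: the factor $\abs{\Gamma}^{-1}$ on the right of~\eqref{t-eulc} is not a ``model-end defect'' appearing in the Gauss--Bonnet identity. All boundary information is already packaged in $I(\mathbb{S}^{5}/\Gamma)$. The $\abs{\Gamma}$-dependence of the threshold enters only through the Sobolev constant $D(6)$, via the asymptotic volume ratio of the ALE end.
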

	We define the Yamabe invariant of a complete manifold as
	\begin{align*}
		Y(M,[g])=\inf_{\stackrel{u\in C^{\infty}_0(M)}{u\neq 0}}\frac{\int_M\pa{\frac{4(n-1)}{n-2}\abs{\nabla u}^2+Su^2}dV_g}{\pa{\int_M\abs{u}^{\frac{2n}{n-2}}dV_g}^{\frac{n-2}{n}}}
	\end{align*}
	and we point out that this quantity is well defined since it is defined over compactly supported functions; moreover it is conformally invariant by definition.
	We then prove the following result for PE manifolds:
	\begin{theorem}\label{t-stability PE}
		Let $(M,g)$ be a PE manifold of dimension $n\geq4$ with positive Yamabe invariant. If there exists $p\in\sq{\frac{n}{2},\infty}$ such that
		\begin{equation}\label{stab Lp}
			||\weyl||_{L^p}\leq Y(M,[g])^{\frac{n}{2p}} \frac{(n-2)}{\sq{8(n-1)}^{\frac{n}{2p}}}s(n)^{-1}
		\end{equation}
		is satisfied, then $(M,g)$ is stable. If the inequality is strict, then $g$ is strictly stable.
	\end{theorem}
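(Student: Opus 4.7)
The plan is to establish non-negativity of the quadratic form of the Einstein operator $\Delta_E$ on transverse-traceless (TT) symmetric $(0,2)$-tensors $h$ with appropriate decay --- which is the definition of stability in the PE setting. I start from the Weitzenb\"{o}ck-type identity on an Einstein manifold,
\begin{equation*}
    \int_M \langle \Delta_E h, h\rangle\, dV_g = \int_M \Bigl(|\nabla h|^2 - 2\weyl_{ikjl}h^{ij}h^{kl} - \tfrac{2S}{n(n-1)}|h|^2\Bigr)\, dV_g,
\end{equation*}
noting that on a PE manifold $S$ is a negative constant, so the zero-order term contributes positively.

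To control the Weyl term, the pointwise inequality \eqref{ineq weyl h point} together with H\"{o}lder's inequality with conjugate exponents $(p,p/(p-1))$ and interpolation between $L^2$ and the Sobolev conjugate $L^{2n/(n-2)}$ gives
\begin{equation*}
    \int_M \weyl_{ikjl}h^{ij}h^{kl}\, dV_g \leq s(n)\|\weyl\|_{L^p}\|h\|_{L^{2p/(p-1)}}^2 \leq s(n)\|\weyl\|_{L^p}\|h\|_{L^{2n/(n-2)}}^{n/p}\|h\|_{L^2}^{2-n/p}.
\end{equation*}
The interpolation weight $n/(2p)$ is precisely the exponent that will appear on $Y(M,[g])$ in the final bound. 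Kato's inequality $|\nabla|h||\leq|\nabla h|$ combined with testing the Yamabe functional on $|h|\in C^\infty_c(M)$ then yields
\begin{equation*}
    \tfrac{n-2}{4(n-1)}Y(M,[g])\,\|h\|_{L^{2n/(n-2)}}^2 \leq \|\nabla h\|_{L^2}^2 + \tfrac{n-2}{4(n-1)}\int_M S|h|^2\,dV_g.
\end{equation*}

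To conclude, I split the mixed norm $\|h\|_{L^{2n/(n-2)}}^{n/p}\|h\|_{L^2}^{2-n/p}$ via the weighted Young inequality $a^\theta b^{1-\theta}\leq \theta t\, a+(1-\theta)t^{-\theta/(1-\theta)}b$ with an optimally chosen $t>0$. The $L^{2n/(n-2)}$ piece is absorbed into $\|\nabla h\|_{L^2}^2$ through the Yamabe estimate above, while the leftover $L^2$ piece is compensated by the positive scalar-curvature term $-\tfrac{2S}{n(n-1)}|h|^2$ in the Einstein operator (together with the $S$-contribution inside the Yamabe inequality). The optimal choice of $t$ is tuned so that both constraints are saturated simultaneously, yielding exactly the factor $(n-2)/[8(n-1)]^{n/(2p)}s(n)^{-1}$, so that the full quadratic form is non-negative provided \eqref{stab Lp} holds; strict inequality gives strict stability. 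The main obstacle is the bookkeeping in this Young-inequality step: both remainders must be absorbed at once, and the optimization must produce a bound depending only on $Y(M,[g])$ --- and not on $S$ separately --- with the precise constant and exponent claimed in \eqref{stab Lp}.
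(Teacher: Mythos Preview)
Your Weitzenb\"{o}ck identity has a sign error that undermines the whole plan. On an Einstein manifold with TT test tensor $h$,
\[
(\Delta_E h,h)_{L^2}=\|\nabla h\|_{L^2}^2-2(\mathring{\weyl}h,h)_{L^2}+\frac{2S}{n(n-1)}\|h\|_{L^2}^2,
\]
so on a PE manifold (where $S=-n(n-1)$) the zero-order term is $-2\|h\|_{L^2}^2$, not $+2\|h\|_{L^2}^2$; compare equation \eqref{DE} in the paper. Hence the scalar-curvature contribution is \emph{bad}, not good, and your scheme of absorbing the leftover $L^2$ piece from Young's inequality into ``the positive scalar-curvature term'' cannot work. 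After H\"{o}lder, interpolation, and the Yamabe--Sobolev estimate you will be left with a coefficient of $\|h\|_{L^2}^2$ that begins at $-2$, and you have no mechanism in your outline to make it non-negative.

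The paper closes this gap with two ingredients you are missing. First, it combines \eqref{DE} with the $D_2$-Bochner formula \eqref{bochner e oper D2}, which on a PE manifold reads $(\Delta_E h,h)_{L^2}=\|D_2h\|_{L^2}^2+(n-2)\|h\|_{L^2}^2-(\mathring{\weyl}h,h)_{L^2}$ and supplies a genuinely positive $(n-2)\|h\|_{L^2}^2$ term; taking a convex combination with parameter $\alpha$ lets one trade gradient against this positive $L^2$ budget. Second, the paper uses the spectral lower bound $\|\nabla h\|_{L^2}^2\geq \lambda_0\|h\|_{L^2}^2$ with $\lambda_0\geq n(n-2)/4$ (see \eqref{nabla h AH} and \eqref{simil poincare}) to convert residual gradient into further $L^2$ positivity. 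Only after inserting both of these, and then optimizing over $\alpha$, the Young parameter $\lambda$, and an auxiliary splitting parameter $\tau$, does one arrive at the constant $(n-2)/[8(n-1)]^{n/(2p)}s(n)^{-1}$. Your outline, using only the raw definition of $\Delta_E$, does not have access to enough positivity to reach that constant (or any stability statement at all).
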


	\begin{rem}
Due to the perturbation result of Graham and Lee (\cite{GrahamLee}), every conformal class on $S^{n-1}$ which is sufficiently close to the one of the round metric is the conformal boundary of a PE metric on $\mathbb{R}^n$. These perturbed PE metrics are $C^{k,\alpha}$-close to the hyperbolic metric and their conformal backgrounds are $C^{k,\alpha}$-close to the flat metric on the disk. By conformal invariance of the Weyl tensor, the Weyl tensor of these PE metrics can have arbitrarily small $L^{n/2}$-norm. For this reason, we cannot obtain an analogue to Theorem
 \ref{isolation thm intro} 
for PE manifolds.
	\end{rem}
	On the other hand, when $(M,g)$ is a compact Einstein manifold, we prove the following theorems.
	\begin{theorem}\label{lpWcpt}
		Let $(M,g)$ be a compact Einstein manifold with positive scalar curvature. If there exists a $p\in \sq{\frac{n}{2},\infty}$ such that
		\begin{equation}\label{Lp weyl cpt}
			\abs{\abs{\weyl}}_{L^p}\leq\frac{S}{2}\frac{n+1}{n(n-1)}\pa{\frac{4(n-1)}{n(n-2)}+1}^{-\frac{n}{2p}}s(n)^{-1}\mathrm{Vol}(M)^{\frac{1}{p}},
		\end{equation}
		then $(M,g)$ is stable. If the inequality is strict, then $(M,g)$ is strictly stable.
	\end{theorem}
	Moreover, when $(M,g)$ is a compact K\"{a}hler-Einstein manifold, we provide a similar criterion involving the Hermitian analogous of the Weyl tensor, the so-called Bochner tensor: 
	\begin{theorem}\label{lpBcpt}
		Let $(M,g)$ be a compact K\"{a}hler-Einstein manifold with positive scalar curvature. If there exists a $p\in \sq{\frac{n}{2},\infty}$ such that
		\begin{equation}\label{Lp boc cpt}
			\abs{\abs{\boc}}_{L^p}\leq\frac{S}{2}\frac{n-2}{n(n+2)}\pa{\frac{4(n-1)}{n(n-2)}+1}^{-\frac{n}{2p}}s(n)^{-1}\mathrm{Vol}(M)^{\frac{1}{p}},
		\end{equation}
		then $(M,g)$ is stable. If the inequality is strict, then $(M,g)$ is strictly stable.
	\end{theorem}
	We point out that Theorem \ref{lpWcpt} and Theorem \ref{lpBcpt} generalize the stability criteria for the $L^\infty$ and $L^{\frac{n}{2}}$ norms of the Weyl and Bochner tensor provided by the second author in \cite{Kroncke15}. Moreover, the constant $s(n)$ improves the already known results.

	Finally, the last part of the paper is dedicated to compact K\"{a}hler-Einstein and Sasaki-Einsten manifolds in order to provide classification results for the Bochner and contact Bochner tensors, which are the Hermitian and Sasakian counterparts of the Weyl tensor. It was proved by Itoh and Kobayashi that for K\"{a}hler-Einstein manifold of dimension $n=2m$ of positive scalar curvature either the scalar curvature is bounded above by the Bochner tensor or $(M,g)$ is biholomorphically homothetic to $\mathbb{CP}^m$ endowed with the Fubini-Study metric. They also provided an analogous result for Sasaki $\eta$-Einstein manifolds: indeed, either the contact Bochner tensor is greater than or equal to a constant depending on the scalar curvature or $(M,g)$ is $D$-homothetic to a quotient of the standard sphere. Note that some other isolation results were provided for the Bochner tensor in \cite{ChongDongLinRen}, improving the constants obtained by Itoh and Kobayashi. Using the same technique of \cite{BCDM}, we provide the optimal pinching result:
	\begin{theorem}\label{thm bochner intro}
		Let $(M,g,J)$ be a compact K\"{a}hler-Einstein manifold of dimension $n=2m\geq4$ with positive Yamabe invariant. Then either $(M,g,J)$ is biholomorphically homothetic to the complex projective space $\mathbb{CP}^m$ endowed with the Fubini-Study metric or, if $\operatorname{B}\neq0$,
		\begin{align}\label{ineq yamabe and bochner}
			Y(M,[g])\leq n\pa{\int_M\abs{\mathcal{B}}^{\frac{n}{2}}\abs{\operatorname{B}}^{-n}dV_g}^{\frac{2}{n}},
		\end{align}
		where $\mathcal{B}$ is a cubic term in Bochner (see equation \eqref{mathcalboc}).
		Moreover, equality holds if and only if $(M,g,J)$ is locally symmetric.
	\end{theorem}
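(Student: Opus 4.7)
The plan is to adapt the sharp pinching argument of \cite{BCDM} (which itself extends the Gursky--LeBrun technique) from the Weyl tensor to the Bochner tensor in the K\"ahler setting. The three pillars of the argument are: a Bochner--Weitzenb\"ock identity of the form $\tfrac{1}{2}\Delta|\boc|^{2}=|\nabla \boc|^{2}+c_{n}S|\boc|^{2}-\mathcal{B}$ on K\"ahler--Einstein manifolds (with $\mathcal{B}$ the cubic contraction appearing in the statement); a refined Kato inequality $|\nabla|\boc||^{2}\le k_{n}|\nabla \boc|^{2}$ with sharp constant $k_{n}<1$ for tensors of Bochner type; and a pointwise contraction estimate bounding $\mathcal{B}$ by a multiple of $|\boc|^{3}$, analogous to \eqref{ineq weyl h point} but adapted to the Hermitian setting. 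The ``conformally K\"ahler'' version will follow by applying the K\"ahler case to a conformal representative and using the conformal invariance of the Bochner tensor and of the Yamabe invariant.

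First I would set $u=|\boc|^{(n-2)/n}$, so that $u^{2n/(n-2)}=|\boc|^{2}$. Multiplying the Bochner--Weitzenb\"ock identity by a suitable power of $|\boc|$, integrating by parts, and using the refined Kato inequality to pass from $|\nabla \boc|^{2}$ to $|\nabla|\boc||^{2}$, after rescaling I expect to obtain
$$\frac{4(n-1)}{n-2}\int_{M}|\nabla u|^{2}dV_{g}+S\int_{M}u^{2}\,dV_{g}\le n\int_{M}\mathcal{B}\,|\boc|^{-2}u^{2}\,dV_{g}.$$
The precise balancing of the Kato constant $k_{n}$ against the coefficient $c_{n}$ in the Bochner--Weitzenb\"ock identity is what forces the coefficient $n$ on the right, and this is the sharp choice.

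Second, I would test the Yamabe--Sobolev inequality against $u$,
$$Y(M,[g])\left(\int_{M}u^{2n/(n-2)}dV_{g}\right)^{(n-2)/n}\le \int_{M}\Big(\tfrac{4(n-1)}{n-2}|\nabla u|^{2}+Su^{2}\Big)dV_{g},$$
and combine it with the bound above. A single application of H\"older's inequality with exponents $n/2$ and $n/(n-2)$ to the right-hand side, splitting $\mathcal{B}|\boc|^{-2}u^{2}$ as $(\mathcal{B}|\boc|^{-2})\cdot u^{2}$ and recognizing $u^{2}=|\boc|^{2(n-2)/n}$, yields
$$Y(M,[g])\left(\int_{M}|\boc|^{2}dV_{g}\right)^{(n-2)/n}\le n\left(\int_{M}|\mathcal{B}|^{n/2}|\boc|^{-n}dV_{g}\right)^{2/n}\left(\int_{M}|\boc|^{2}dV_{g}\right)^{(n-2)/n},$$
which gives \eqref{ineq yamabe and bochner} after cancellation of $\bigl(\int|\boc|^{2}\bigr)^{(n-2)/n}$, assuming $\boc\not\equiv 0$.

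For the equality discussion I would trace back through each inequality: equality in the refined Kato forces $\nabla \boc$ to lie in a specific irreducible component, equality in the pointwise bound for $\mathcal{B}$ combined with this forces $\nabla \boc=0$, equality in H\"older forces $|\boc|$ to be constant, and saturation of the Yamabe inequality forces $g$ itself to be a Yamabe minimizer; together these characterize locally symmetric K\"ahler--Einstein manifolds. In the alternative case $\boc\equiv 0$, the Matsushima--Tanno classification of K\"ahler--Einstein manifolds with vanishing Bochner tensor and positive scalar curvature identifies $(M,g,J)$ with $(\mathbb{CP}^{m},g_{FS})$ up to biholomorphic homothety. The main obstacle is carrying out the bookkeeping of constants: one must combine the sharp Kato constant for the Bochner tensor, the coefficient $c_{n}$ in the K\"ahler Bochner--Weitzenb\"ock formula (which requires a careful use of the second Bianchi identity in the Hermitian setting), and the sharp pointwise estimate for $\mathcal{B}$, in such a way that they collapse exactly to the factor $n$ in \eqref{ineq yamabe and bochner}; this sharp tuning is also what makes the equality statement detect local symmetry.
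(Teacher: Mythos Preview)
Your overall strategy is sound and is genuinely different from the paper's, but there is a concrete error in the execution. You propose to test the Yamabe--Sobolev inequality directly with $u=|\boc|^{(n-2)/n}$; however, with the refined Kato constant $\kappa=\frac{n+6}{n+2}$ for $\nabla\boc=\nabla\riem$ on a K\"ahler--Einstein manifold, the balancing you claim does not hold in dimension $n=4$. After multiplying the Bochner--Weitzenb\"ock formula by $|\boc|^{2\alpha-2}$ and integrating by parts, the inequality you need is
\[
\frac{4(n-1)}{n-2}\,\alpha^{2}\;\le\;n\Big(\alpha-\tfrac{n-2}{2(n+2)}\Big),
\]
and for $\alpha=(n-2)/n$ at $n=4$ this reads $\tfrac{3}{2}\le\tfrac{4}{3}$, which is false. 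The discriminant of the associated quadratic in $\alpha$ is $\frac{n(n-2)(n-4)}{n+2}$, so for $n=4$ there is exactly one admissible exponent, namely $\alpha=\tfrac13$, while your choice $(n-2)/n$ only works for $n\ge 6$. With the corrected exponent your direct route does produce the sharp constant $n$ in all even dimensions $n\ge 4$, and the equality case can then be traced (Obata forces $u$ constant, hence $|\boc|$ constant, and equality in the Kato step forces $\nabla\boc=0$).

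The paper instead follows the Gursky--LeBrun/Bour--Carron scheme: one introduces a modified Yamabe invariant $\overline{Y}^{\,t}(M,[g])$ built from $S-t\,\mathcal{B}|\boc|^{-2}$, chooses the exponent $q=\tfrac12\tfrac{m-1}{m+1}$ so that the gradient term in $\Delta(|\boc|^{2}+\eps^{2})^{q}$ disappears after the refined Kato inequality, proves that the associated quadratic form $\overline{Y}_{g}(\bar\beta)$ is nonpositive for a specific $\bar\beta\in(0,1]$, and then uses concavity in $\beta$ together with the existence of a minimizing metric $\hat g$ and Obata's theorem to conclude. This approach is dimension-uniform and yields the equality characterization in one stroke via $\hat g=g$ rather than by tracing equality through Kato, H\"older and the Yamabe inequality separately. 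Your argument, once the exponent is fixed, is shorter and avoids the auxiliary minimizer $\hat g$; the paper's argument buys a cleaner rigidity discussion and parallels the Weyl case in \cite{BCDM}, where the direct approach actually fails in odd dimension $5$.
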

	Note that Theorem \ref{thm bochner intro} is the analogous of the isolation result for the Weyl tensor established by Theorem 1.1 in \cite{BCDM}.\\
	Our last result provides an improvement of the constants found by Itoh and Kobayashi in \cite{ItohKobayashi} on Sasaki $\eta$-Einstein manifolds. Namely,
	\begin{theorem}\label{thm contact Bochner}
		Let $(M,\phi,\xi,\eta,g)$ be a compact Sasaki $\eta$-Einstein manifold of dimension $n=2m+1\geq5$, with scalar curvature $S>-(n-1)$. Then either $M$ is $D$-homothetic to a quotient of the standard $n$-sphere or, 
		if $n=5$ and $\weyl\neq0$,
		\begin{align*}
			||\operatorname{B}||_{L^{\frac{5}{2}}}>\frac{5\sqrt{10}}{128}(S+4)\mathrm{Vol}(M)^{\frac{2}{5}};
		\end{align*}
		if $n>5$ and $\weyl\neq0$,
		\begin{align*}
			||\operatorname{B}||_{L^{\frac{n}{2}}}\geq \pa{2+\frac{1}{2}\frac{n(n-1)-4}{\sqrt{n(n-2)(n-1)(n+1)}}}^{-1}\frac{(S+n-1)}{n+1}\mathrm{Vol}(M)^{\frac{2}{n}}.
		\end{align*}
	\end{theorem}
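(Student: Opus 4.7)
The argument follows the pinching scheme common to Theorem \ref{thm bochner intro} and to the original isolation result of Itoh--Kobayashi \cite{ItohKobayashi}, with the main improvement coming from the sharp algebraic bound \eqref{ineq weyl h point} transplanted to the contact setting. The plan is to proceed by contradiction: assume $\operatorname{B}\not\equiv 0$ and that the claimed $L^{n/2}$ bound fails, and conclude that $\operatorname{B}\equiv 0$; by Tanno's classification of Bochner-flat $\eta$-Einstein Sasaki manifolds, $M$ is then $D$-homothetic to $\mathbb{S}^n/\Gamma$.

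First I would invoke the Bochner--Weitzenb\"ock identity for the contact Bochner tensor of a compact $\eta$-Sasaki Riemannian manifold (on which $S$ is constant). Its schematic form is
\[ \tfrac{1}{2}\Delta|\operatorname{B}|^2 \;=\; |\nabla\operatorname{B}|^2 \;+\; \tfrac{S+n-1}{n+1}\,|\operatorname{B}|^2 \;-\; 2\,\mathcal{Q}(\operatorname{B}), \]
where $\mathcal{Q}(\operatorname{B})$ is cubic in $\operatorname{B}$; the combination $S+n-1$ and the factor $(n+1)$ reflect the $\eta$-Einstein condition and the behaviour of $\operatorname{B}$ under Tanno's $D$-homothetic transformation. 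Multiplying by $|\operatorname{B}|^{n/2-2}$ and integrating over $M$, with the Kato inequality converting the Laplacian term into a non-negative gradient contribution, produces
\[ \tfrac{S+n-1}{n+1}\int_M |\operatorname{B}|^{n/2}\,dV_g \;+\; (\text{non-negative gradient terms}) \;\leq\; 2\int_M |\operatorname{B}|^{n/2-2}\mathcal{Q}(\operatorname{B})\,dV_g. \]

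The central step is the pointwise algebraic estimate $|\mathcal{Q}(\operatorname{B})|\leq C(n)\,|\operatorname{B}|^3$ with the constant $C(n)$ in the statement. For $n\geq 7$ this parallels \eqref{ineq weyl h point}, obtained by bounding the largest eigenvalue of $\operatorname{B}$ viewed as a trace-free symmetric endomorphism on the horizontal tensors satisfying the contact Bochner symmetries. The lowest non-trivial dimension $n=5$ requires a separate explicit diagonalisation yielding the sharper constant $\tfrac{5\sqrt{10}}{128}$; as in the derivation of $C(4)$ and $C(6)$ of Theorem \ref{isolation thm intro}, this exceptional case is the main computational obstacle.

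Inserting the cubic estimate bounds the right-hand side by $2C(n)\int|\operatorname{B}|^{n/2+1}\,dV_g$. A H\"older interpolation followed by a Sobolev embedding applied to $|\operatorname{B}|^{n/4}$ on the compact manifold absorbs the cubic contribution against the gradient terms already on the left-hand side and collapses the remaining constants into a $\mathrm{Vol}(M)^{2/n}$ factor. Rearranging produces the displayed lower bound for $\|\operatorname{B}\|_{L^{n/2}}$; in the borderline case, equality in the pointwise eigenvalue estimate together with vanishing of the gradient terms forces $\operatorname{B}\equiv 0$, completing the dichotomy.
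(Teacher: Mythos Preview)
Your plan diverges substantially from the paper's argument, and in a way that creates a real gap.

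The paper does \emph{not} run a Bochner--Weitzenb\"ock scheme for the contact Bochner tensor at all. Instead it uses the observation of Itoh--Kobayashi that a Sasaki $\eta$-Einstein structure with $S>-(n-1)$ can be $D$-homothetically deformed, with parameter $\alpha=\frac{S+n-1}{(n-1)(n+1)}$, into a genuine Sasaki--Einstein metric $g_\alpha$ satisfying $\ric_\alpha=(n-1)g_\alpha$ and, crucially, $\operatorname{B}_\alpha=\weyl_\alpha$. One then applies Corollary~1.2 of \cite{BCDM} (the Weyl isolation result for Einstein metrics, itself proved via the modified Yamabe invariant) to $(M,g_\alpha)$ and transports the inequality back through the explicit scaling laws
\[
\|\weyl_\alpha\|_{L^{n/2}(g_\alpha)}=\alpha^{1/n}\|\operatorname{B}\|_{L^{n/2}(g)},\qquad
Y(M,[g_\alpha])=n(n-1)\,\alpha^{(n+1)/n}\,\mathrm{Vol}_g(M)^{2/n}.
\]
The constants in the statement (including the $n=5$ constant $\tfrac{5\sqrt{10}}{128}$) drop out of this reduction together with the \cite{BCDM} constants $A(5)=\tfrac{64}{3\sqrt{10}}$ and $A(n)=nC(n)$; there is no new algebraic diagonalisation in the contact setting.

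Your direct approach has two concrete problems. First, the Bochner--Weitzenb\"ock identity you write down for $\operatorname{B}$ on an $\eta$-Einstein Sasaki manifold is asserted rather than derived; the $\xi$-direction and the $\phi$-terms in \eqref{contact Bochner} make this nontrivial, and this is precisely what the $D$-homothetic reduction is designed to bypass. Second, and more seriously, the scheme you describe---multiply by $|\operatorname{B}|^{n/2-2}$, integrate, use Kato and then a single Sobolev step---is the classical Hebey--Vaugon/Singer approach, which yields strictly \emph{worse} constants than those in the statement. The improved constants here come from the Gursky--LeBrun/Bour--Carron modified Yamabe technique (as in the proof of Theorem~\ref{thm bochner intro} and in \cite{BCDM}), which you invoke by name but do not actually implement: there is no analogue of the operator $\mathcal{L}^\beta$ or of the concavity argument in your outline. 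Without that machinery you will not recover the displayed bounds, in particular not the sharp $n=5$ constant.
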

	The proof is obtained combining Corollary 1.2 of \cite{BCDM} and the isolation result provided by Itoh and Kobayashi.\\
	The paper is organized as follows: In Section \ref{Sec Preliminaries} we fix the notation and we introduce some useful definitions. Section \ref{Sec ALE} is dedicated to stability and isolation results for ALE manifolds while Section \ref{Sec PE} is dedicated to the proof of Theorem \ref{t-stability PE}. Section \ref{Sec stability cpt} is devoted to the proof of Theorem \ref{lpWcpt} and Theorem \ref{lpBcpt}. The last section is devoted to pinching results for K\"{a}hler-Einstein and Sasaki $\eta$-Einstein manifolds.

	\section*{Acknowledgments} L.B. thanks KTH for the hospitality, Professor P. Mastrolia and Professor G. Catino for their interest and support in this work and D. Dameno and M. Gatti for the useful discussions. L.B. is a member of the Gruppo Nazionale per le Strutture Algebriche, Geometriche e loro Applicazioni (GNSAGA) of INdAM (Istituto Nazionale di Alta Matematica). Furthermore, both authors want to thank the referee for careful reading and many helpful comments which
helped to improve the paper.
	\section{Preliminaries}\label{Sec Preliminaries}
	Let us first fix some notation and conventions. Let $(M,g)$ be a Riemannian manifold of dimension $m\geq3$, given a fixed metric we point out that our convention for the squared norm of a $(p,q)$-tensor field $T$
	is
	\[
	\abs{T}^2=T_{i_1...i_q}^{j_1...j_p}T_{i_1...i_q}^{j_1...j_p}.
	\] 
	Moreover, we define the Laplace-Beltrami operator as $\Delta=\mathrm{tr}\nabla^2$. We define the Riemann curvature tensor in global notation ($(1,3)$-version) as:
	\begin{align*}
		R(X,Y)Z=\nabla_X \nabla_Y Z- \nabla_Y\nabla_X Z- \nabla_{[X,Y]}Z, && \forall\, X,Y,Z \in \mathfrak{X}(M),
	\end{align*}
	where $\mathfrak{X}(M)$ are the smooth vector fields on $M$; while we define its $(0,4)$-version as:
	$$\mathrm{Riem}(X,Y,Z,W)=g(R(X,Y)W,Z)\:, \qquad \forall\: X,\:Y,\:Z,\:W \in \mathfrak{X}(M).$$
	It is well-known that on an $n$-dimensional Riemannian manifold $(M,g)$, $n\geq3$,
	the Riemann curvature tensor $\operatorname{Riem}$ admits the
	decomposition:
	\[
	\operatorname{Riem}=\weyl+\dfrac{1}{n-2}
	\ricc\KN g
	-\dfrac{S}{2(n-1)(n-2)}g\KN g,
	\]
	where $\weyl$ is the \emph{Weyl} tensor,
	$\ricc$ is the \emph{Ricci} tensor and
	$S$ is the \emph{scalar curvature}. Here, $\KN$ denotes the
	\emph{Kulkarni-Nomizu} product: let $h,k$ be two $(0,2)$ tensors, then, in coordinates
	$$(h\KN k)_{ijtl}=h_{it}k_{jl}-h_{il}k_{jt}+h_{jl}k_{it}-h_{jt}k_{jl}.$$
	In particular, with respect to a local
	orthonormal frame, we have
	\begin{align*} 
		R_{ijkt}=W_{ijkt}+\dfrac{1}{n-2}\pa{R_{ik}\delta_{jt}
			-R_{it}\delta_{jk}+R_{jt}\delta_{ik}-R_{jk}\delta_{it}}
		-\dfrac{S}{(n-1)(n-2)}\pa{\delta_{ik}\delta_{jt}-\delta_{it}\delta_{jk}},
	\end{align*}
	where $R_{ij}=R_{ikjk}$ and $S=R_{ii}$. For a Riemannian manifold $(M,g)$ of dimension $3$ the Weyl tensor is
	identically zero while for $n\geq 4$, the Weyl
	tensor is the  traceless part of $\riem$. The $(1,3)$-version of Weyl remains unchanged under conformal
	deformations of the metric $g$: we recall that a \emph{conformal
		deformation} of $g$ is a new metric $\tilde{g}$ obtained by
	rescaling $g$ \emph{via} a smooth positive function $f$, i.e.
	\begin{equation} \label{confchange}
		\tilde{g}=f^2g
	\end{equation}
	and we denote 
	\[
	[g]=\set{\tilde{g}\in \mathcal{M}\,:\,\exists f\in C^{\infty}(M), f>0\,:\,\tilde{g}=f^2g}.
	\]
	A Riemannian manifold is said to be $\emph{Einstein}$ if 
	the Ricci tensor is proportional to the metric, namely
	\begin{align*}
		\ric=\lambda g,\quad\lambda\in\erre,
	\end{align*}
	where $\lambda=S/n$.
	In this case the curvature is encoded by the Weyl tensor and by the value of the scalar curvature; indeed
	\begin{align}\label{riem on Einstein}
		\riem=\weyl+\frac{S}{2n(n-1)}g\KN g.
	\end{align}
	In particular, we say that $(M,g)$ is \emph{Ricci-flat} if the Ricci tensor vanishes identically.\\
	For every $n\geq 4$, the Weyl tensor of an Einstein manifold satisfies
	a fundamental Bochner-Weitzenb\"{o}ck formula
	\[
	\dfrac{1}{2}\Delta\abs{\weyl}^2=
	\abs{\nabla\weyl}^2+\dfrac{2S}{n}\abs{\weyl}^2-
	2Q,
	\]
	where $Q=
	2W_{pqrs}W_{ptru}W_{qtsu}+\frac{1}{2}
	W_{pqrs}W_{pqtu}W_{rstu}$.
	When $n=4$ the Weyl tensor can be seen as a self-adjoint operator from the bundle of $2$-forms to itself
	\begin{align*}
		\mathcal{W}:\Lambda^2 \rightarrow \Lambda^2.
	\end{align*}
	The splitting of the bundle of $2$-forms  ($\Lambda^2=\Lambda_+\oplus\Lambda_-$)
	leads to the decomposition of
	the Weyl operator into a self-dual and an anti-self-dual
	part and a decomposition of the Weyl tensor: 
	\begin{align*}
		\weyl=\weyl^++\weyl^-.
	\end{align*}
	Therefore, when $n=4$ , the Weitzenb\"{o}ck
	formula holds for $W^{\pm}$ and the term $Q^{\pm}$ can be rewritten 
	in a simpler way: 
	\begin{align*}
		Q^{\pm}:=2W_{pqrs}^{\pm}W_{ptru}^{\pm}W_{qtsu}^{\pm}+
		\dfrac{1}{2}W_{pqrs}^{\pm}W_{pqtu}^{\pm}W_{rstu}^{\pm}=
		36\operatorname{det}_{\Lambda_{\pm}}\mathcal{W}_{\pm}.
	\end{align*} 
	In general, for $n\geq4$ we can find estimates for $Q$ in terms of
	$\abs{\weyl}^3$: 
	\begin{align}\label{estimate on Q}
		Q\leq C(n)\abs{\weyl}^3,
	\end{align}
	with $C(4)=\frac{\sqrt{6}}{4}$, $C(5)=\frac{4}{\sqrt{10}}$, $C(6)=\frac{\sqrt{70}}{2\sqrt{3}}$ and $C(n)=2+\frac{1}{2}\frac{n(n-1)-4}{\sqrt{n(n-2)(n-1)(n+1)}}$ for
	$n\geq 6$ ,
	but we cannot simplify the Bochner-Weitzenb\"{o}ck formula for $n>4$ (we point out that the constant for $n=4$ was obtained in \cite[Lemma 3.5]{Huisken}, for $n=5$ was obtained by Tran in \cite{Tran}, for $n=6$ it was obtained by Huisken in \cite{Huisken} adapting an argument due to Tachibana \cite{tachibana2} while the other constants are obtained using Cauchy-Schwartz inequality combined with \cite[Lemma 2.4]{Huisken})\\
	
	We now consider the self-adjoint elliptic operator $\Delta_E:\Gamma(S^2 M)\ra\Gamma(S^2M)$
	\begin{align}\label{def of Delta E}
		\Delta_E h=-\Delta h-2\mathring{R}h,
	\end{align}
	where $(\mathring{R}h)_{ij}=R_{ipjq}h_{pq}$. The operator
	$\Delta_E$ is known as the \emph{Einstein operator}.
	\begin{rem}
		Throughout the paper we will denote 
		\begin{align*}
			(\mathring{W}h)_{ij}=W_{ipjq}h_{pq};\quad(\mathring{B}h)_{ij}=B_{ipjq}h_{pq},
		\end{align*}
		where $\weyl$ and $\boc$ denote the Weyl and Bochner tensor of $g$.
	\end{rem}
	\begin{rem}
			The Einstein operator satisfies the following Bochner formulas for every $h\in\Gamma(S^2 M)$ tensor with compact support
			\begin{align}\label{bochner e oper D1}
				(\Delta_E h,h)_{L^2}&=||D_1h||^2_{L^2}+2\frac{S}{n}\abs{\abs{h}}_{L^2}^2-4(\mathring{R}h,h)_{L^2}-2\abs{\abs{\diver h}}_{L^2}^2,
			\end{align}
			and
			\begin{align}\label{bochner e oper D2}
				(\Delta_E h,h)_{L^2}&=||D_2h||^2_{L^2}-\frac{S}{n}||h||_{L^2}-(\mathring{R}h,h)_{L^2}+\abs{\abs{\diver h}}_{L^2}^2,
			\end{align}
			where $D_1$ and $D_2$ are two differential operators defined by
			\begin{align*}
				D_1h(X,Y,Z)&=\frac{1}{\sqrt{3}}\pa{\nabla_Xh(Y,Z)+\nabla_Yh(Z,X)+\nabla_Zh(X,Y)},\\
				D_2h(X,Y,Z)&=\frac{1}{\sqrt{2}}\pa{\nabla_Xh(Y,Z)-\nabla_Yh(Z,X)},
			\end{align*}
			for any $X,Y,Z\in\mathfrak{X}(M)$.
			See \cite{Besse, Koiso78} for more details.
			These formulas will be used in the proof of Theorems \ref{t-stability PE}, \ref{lpWcpt} and \ref{lpBcpt}. 
	\end{rem} 
	In this paper, compact manifolds are always assumed to have an empty boundary, unless stated otherwise.
	\begin{defi} We say that a compact Einstein manifold $(M,g)$ is \emph{stable} (\emph{strictly stable}) if the Einstein operator is non-negative (\emph{positive}) on transverse traceless tensors (namely, symmetric $(0,2)$-tensors that are trace free and whose divergence is identically zero, briefly referred to as TT tensors). We call $(M,g)$ \emph{unstable} if the Einstein operator admits negative eigenvalues on transverse traceless tensors. 
	\end{defi}
	More in general, for a non-compact Einstein manifold we say that it is strictly stable if there exists a constant $C>0$ such that 
	\begin{equation}\label{def stab}
		(\Delta_Eh,h)_{L^2}\geq C\abs{\abs{h}}_{L^2}^2
	\end{equation}
	for all compactly supported $TT$ tensors; we call $(M,g)$ stable if \eqref{def stab} holds for $C=0$.
	In particular, we will be interested in stability results on ALE manifolds and on \emph{asymptotically hyperbolic} (AH) manifolds that are Einstein.
	
	\subsection{ALE manifolds}
	In the first part of the paper we will focus on specific non-compact Ricci-flat Riemannian manifolds, namely \emph{asymptotically locally Euclidean} manifolds. 
	\begin{defi}[ALE manifold]\label{d-ale} An $n$-dimensional complete Riemannian manifold $(M,g)$ is said to be \emph{asymptotically locally Euclidean} (ALE) of order $\tau>0$ if there exists a compact set $K\subset M$ such that $M\setminus K$ has coordinates at infinity: namely there exists a diffeomorphism $\phi:M\setminus K\ra\pa{\erre^n\setminus\ol{B}_1}/\Gamma$, where $\Gamma$  is a finite subgroup of $SO(n)$ acting freely on $\erre^n$, such that
		\begin{align*}
			\abs{\pa{\nabla^{eucl}}^k\pa{\phi_*g-g_{eucl}}}_{eucl}=O(r^{-\tau-k})
		\end{align*}
		holds on $\pa{\erre^n\setminus\ol{B}_1}/\Gamma$. 
	\end{defi}
	Many of these spaces arise from a physical context as solutions to the Einstein equation. The examples found by physicists admit an \emph{hyperk\"{a}hler} structure, that is
	\begin{defi}[hyperk\"{a}hler structure]
		An almost complex structure on $M$ is an endomorphism $J:TM\ra TM$ satisfying
		\[
		J^2=-I.
		\]
		A \emph{hyperk\"{a}hler structure} on a Riemannian manifolds $(M,g)$ is given by a set of three integrable almost complex structures (I,J,K), such that
		\begin{align*}
			&IJ=-JI=K;\\
			&\nabla I=\nabla J=\nabla K=0;\\
			&g(IX,IY)=g(JX,JY)=g(KX,KY)=g(X,Y), \quad\forall X,Y\in\mathfrak{X}(M).
		\end{align*}
	\end{defi}
	A well-know example of Ricci-flat ALE was found by Eguchi and Hanson in \cite{EguchiHanson} and it is known as Eguchi-Hanson space. This space is an ALE complete $4$-dimensional non-compact Riemannian manifold satisfying the Einstein equation with $\Gamma=\mathbb{Z}_2$ and it is hyperk\"{a}hler (thus Ricci-flat). More in general, Hitchin conjectured the existence of other ALE complete $4$-dimensional non-compact Riemannian manifolds satisfying the Einstein equation for other finite subgroups $\Gamma\subset SU(2)$. The conjecture was proved true by Kronheimer (see \cite{Kronheimer}), who showed that every $4$-dimensional hyperk\"{a}hler ALE manifold is diffeomorphic to a minimal resolution of $\erre^4\setminus\set{0}/\Gamma$, where $\Gamma$ is a discrete subgroup of $SU(2)$ acting freely on $\mathbb{S}^3$. It is an open problem whether other
	examples of Ricci-flat ALE manifolds exist in dimension four.\\
	One of the main tools we are going to use to prove Theorem \ref{isolation thm intro} is the following Sobolev inequality, which holds for complete non-compact Riemannian manifolds of dimension $n\geq2$ with non-negative Ricci and having Euclidean volume growth.
	\begin{theorem}[Theorem 1.1 of \cite{SharpSobolev}]\label{sob thm}
		Let $(M,g)$ be a non-compact, complete $n$-dimensional manifold ($n\geq2$) with $\ric\geq0$ and having Euclidean volume growth. If $q\in[1,n)$, then
		\begin{align}\label{sobolev}
			\pa{\int_M\abs{f}^{q^*}dV_g}^{\frac{1}{q^*}}\leq \mathrm{AT}(n,q)\mathrm{AVR}^{-\frac{1}{n}}_g\pa{\int_M\abs{\nabla f}^qdV_g}^{\frac{1}{q}},\quad\forall f\in C^\infty_0,
		\end{align}
		where $q^*=\frac{qn}{n-q}$, 
		\begin{align*}
			\mathrm{AT}(n,q)&=\pi^{-\frac{1}{2}}n^{-\frac{1}{q}}\pa{\frac{q-1}{n-q}}^{\frac{q-1}{q}}\pa{\frac{\Gamma\pa{1+\frac{n}{2}}\Gamma(n)}{\Gamma\pa{\frac{n}{q}}\Gamma\pa{1+\frac{n(q-1)}{q}}}}^{\frac{1}{n}},\\
			\mathrm{AVR}_g&=\lim_{r\ra\infty}\frac{\mathrm{Vol_g(B_r(x))}}{\omega_nr^n}
		\end{align*}
		and $C^\infty_0$ denotes the space of smooth compactly supported functions.
	\end{theorem}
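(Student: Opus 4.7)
The plan is to prove the sharp Sobolev inequality via the Cordero-Erausquin-Nazaret-Villani optimal transport strategy, adapted to the Riemannian setting with $\ric\geq0$. By replacing $f$ with $|f|$ and mollifying, reduce to $0\leq f\in C^\infty_0(M)$. Fix the Aubin-Talenti extremizer $G$ on $\mathbb{R}^n$ for the Euclidean sharp Sobolev inequality at exponent $q$, rescaled so that $\int_{\mathbb{R}^n} G^{q^*}\,dx=\int_M f^{q^*}\,dV_g$, and set $\mu=f^{q^*}\,dV_g$, $\nu=G^{q^*}\,dx$. By McCann's theorem there exists an optimal transport map $T\colon M\to\mathbb{R}^n$ pushing $\mu$ to $\nu$, satisfying the Monge-Ampère equation
\begin{align*}
f(x)^{q^*}=G(T(x))^{q^*}\,|\det dT(x)|.
\end{align*}

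The hypothesis $\ric\geq0$ enters through the Cordero-Erausquin-McCann-Schmuckenschl\"ager displacement-concavity theorem, which yields the pointwise arithmetic-geometric inequality $n\,|\det dT|^{1/n}\leq\mathrm{div}_g T$ on $\{f>0\}$. Substituting into the Monge-Ampère relation, taking the $n$-th root, multiplying by $f$, and integrating over $M$ produces an inequality whose left-hand side --- via the pushforward identity $T_\#\mu=\nu$ together with the explicit form of the Aubin-Talenti profile $G$ --- rearranges into a constant multiple of $\|f\|_{L^{q^*}(M)}$, raised to a power determined by the exponent algebra. An integration by parts on the right-hand side converts $\int_M f^{\alpha}\,\mathrm{div}_g T\,dV_g$ into $-\alpha\int_M f^{\alpha-1}\pair{T,\nabla f}\,dV_g$ (no boundary terms appear, since $\supp f$ is compact), which H\"older's inequality bounds by $\|\nabla f\|_{L^q(M)}$ times a finite moment of $\nu$ computable in closed form from $G$.

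The factor $\mathrm{AVR}_g^{-1/n}$ appears because the mass normalization $\int_M f^{q^*}\,dV_g=\int_{\mathbb{R}^n}G^{q^*}\,dx$ interacts with the asymptotic volume comparison between $(M,g)$ and Euclidean space: under $\ric\geq0$, Bishop-Gromov monotonicity gives $\mathrm{Vol}_g(B_r(x))/(\omega_n r^n)\downarrow\mathrm{AVR}_g$, and this is precisely the proportionality constant relating the Riemannian and Euclidean extremal profiles in the transport normalization. The main obstacle, and the reason the theorem is recent despite the Euclidean version being classical, is the faithful tracking of constants throughout the transport argument: one must apply displacement concavity with its exact equality case in mind, and arrange the "missing mass" $\mathrm{AVR}_g\in(0,1]$ so that the sharp constant $\mathrm{AT}(n,q)$ survives intact rather than being degraded. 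Should the optimal-transport bookkeeping prove too delicate, a complementary route is Brendle's ABP method applied to a $q$-Laplacian Neumann problem on exhausting geodesic balls, which likewise recovers the sharp Aubin-Talenti constant via contact-set analysis.
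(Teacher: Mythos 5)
This statement is not proved in the paper at all: it is quoted verbatim as Theorem 1.1 of \cite{SharpSobolev} and used as a black box, so there is no internal proof to compare your attempt against. What can be assessed is whether your sketch would actually establish the cited result, and as written it would not. The central step fails: McCann's theorem produces an optimal transport map between two probability measures \emph{on the same} Riemannian manifold, not a map $T\colon M\to\mathbb{R}^n$ pushing $f^{q^*}dV_g$ onto an Aubin--Talenti profile on Euclidean space. Consequently the pointwise inequality $n\,|\det dT|^{1/n}\leq\operatorname{div}_g T$, which in the Euclidean Cordero-Erausquin--Nazaret--Villani argument is just the arithmetic--geometric mean inequality applied to the eigenvalues of $\nabla^2\varphi$ for $T=\nabla\varphi$ convex, has no meaning for a map into a different space, and the Cordero-Erausquin--McCann--Schmuckenschl\"ager displacement-convexity estimate (which is an inequality for the Jacobian of $\exp_x(t\nabla\varphi(x))$ along transport geodesics \emph{inside} $M$) does not supply it. Your explanation of where $\mathrm{AVR}_g^{-1/n}$ comes from is likewise heuristic: Bishop--Gromov monotonicity is indeed the mechanism, but it enters through the asymptotics of the volume of large balls in an intrinsic transport (or through the sharp isoperimetric inequality $P(\Omega)\geq n\omega_n^{1/n}\mathrm{AVR}_g^{1/n}\mathrm{Vol}(\Omega)^{(n-1)/n}$ followed by a Talenti-type rearrangement), not through a ``mass normalization'' against a Euclidean extremizer.

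The correct circle of ideas is the one you name in your last sentence --- Brendle's ABP method on exhausting domains, or the Balogh--Kristály optimal-transport proof, both of which first establish the sharp isoperimetric/functional inequality with the $\mathrm{AVR}_g$ factor and then track the Aubin--Talenti constant $\mathrm{AT}(n,q)$ --- but gesturing at these as a fallback is not a proof. Since the result is imported from \cite{SharpSobolev}, the honest options are either to cite it (as the paper does) or to reproduce one of those arguments in full; the transport-to-$\mathbb{R}^n$ scheme you propose would need to be replaced wholesale rather than repaired.
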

	\subsection{AH manifolds} The second part of the paper is devoted to another class of non-compact manifolds, namely \emph{asymptotically hyperbolic} manifolds.
%
\begin{defi}[AH manifolds]
		 A complete Riemannian manifold $(M,g)$ is called conformally compact of class $C^{k,\alpha}$ if $M$ is the interior of a manifold $\overline{M}$ with boundary $\partial M$ and the metric $h=\rho^2g$ can be extended to a $C^{k,\alpha}$-metric on all of $\overline{M}$. Here, $\rho:\overline{M}\to [0,\infty)$ is a boundary defining function for $\overline{M}$, that is $\rho^{-1}(0)=\partial M$ and $d\rho$ is nowhere vanishing on $\partial M$. 
Note that the metric $h|_{\partial M}$ on $\partial M$ depends on $\rho$ but its conformal class is independent of it. We call $(\partial M,[h|_{\partial N}])$ the {conformal boundary} of $(M,g)$. Furthermore, we call $(M,g)$ {asymptotically hyperbolic} (AH), if $|d\rho|_h\equiv 1$ on $\partial M$.
\end{defi}	
	
	A standard computation, which makes use of the conformal transformation of the curvature (see e.g. \cite{Besse}) shows that when $(M,g)$ is asymptotically hyperbolic, then its sectional curvature converges to $-1$,
	\begin{align*}
		K=-1+O(\rho).
	\end{align*}
	Moreover, an Einstein manifold that is AH is called Poincaré-Einstein, or simply PE. In this case the Ricci tensor and the scalar curvature curvature satisfy
	\begin{align*}
		\ric=-(n-1)g
	\end{align*}
	and 
	\begin{align*}
		S=-n(n-1),
	\end{align*}
	respectively. Observe that, since the Weyl functional is invariant under conformal change of the metric, on an AH manifold we have that 
	\begin{align*}
		||\weyl_g||_{L^{\frac{n}{2}}(g)}=||\weyl_h||_{L^{\frac{n}{2}}(h)}
	\end{align*}
	where $\weyl_g$ is the Weyl and $\weyl_h$ are the Weyl tensor with respect to the metric $g$ and $h$, respectively. In particular,
	since $(\ol{M},h)$ is a compact manifold with boundary, $||\weyl_h||_{L^{\frac{n}{2}}(h)}$ is finite and then we deduce that also $||\weyl||_{L^{\frac{n}{2}}(g)}$ does not diverge.
	\subsection{Compact K\"{a}hler and Sasaki manifolds}
	The second part of the paper is dedicated to compact K\"{a}hler-Einstein and Sasaki-Einstein manifolds, therefore we recall here some useful definitions.
	\begin{defi}
		Let $(M,g)$ be a Riemannian manifold of dimension $n=2m$, an almost complex structure on $M$ is an endomorphism $J:TM\ra TM$ satisfying
		\[
		J^2=-I.
		\]
		When $J$ is integrable and the metric $g$ is Hermitian (namely, $g(JX,JY)=g(X,Y)$), the triple $(M,g,J)$ is said to be K\"{a}hler.
	\end{defi}
	Note that, when $(M,g,J)$ is Kähler, then the Riemann tensor and the Ricci tensor satisfy the following identities
	\begin{align*}
		\mathrm{Riem}(X,Y,Z,W)&=\mathrm{Riem}(JX,JY,Z,W)=\mathrm{Riem}(X,Y,JZ,JW),\\
		\mathrm{Ric}(X,Y)&=\mathrm{Ric}(JX,JY),
	\end{align*}
	for each $X,Y,Z,W \in \mathfrak{X}(M)$.\\
	In \cite{Bochner}, Bochner introduced a complex counterpart of the Weyl tensor, known as \emph{Bochner tensor}. 
	\begin{defi}[Bochner tensor]
		Let $(M, g, J)$ be a K\"{a}hler manifold of dimension $n=2m\geq4$, then the \emph{Bochner curvature tensor} is the $(0,4)$ Hermitian tensor		
		whose components, with respect to complex local coordinates, are
		\begin{align*}
			B_{i\ol{j}k\ol{l}}=&R_{i\ol{j}k\ol{l}}-\frac{1}{m+2}\pa{R_{i\ol{j}}g_{k\ol{l}}+R_{k\ol{j}}g_{i\ol{l}}+R_{k\ol{l}}g_{i\ol{j}}+R_{i\ol{l}}g_{k\ol{j}}}+\frac{R}{(m+1)(m+2)}\pa{g_{i\ol{j}}g_{k\ol{l}}+g_{k\ol{j}}g_{i\ol{l}}},
		\end{align*} 
		where $R_{i\ol{j}k\ol{l}}$, $R_{i\ol{j}}$ are the components of the Riemann curvature tensor and of the Ricci tensor, respectively, and $R=g^{i\ol{j}}R_{i\ol{j}}$ is the complex scalar curvature. Note that $R=\frac{S}{2}$, where $S$ is the standard scalar curvature.
	\end{defi}
	In \cite{Tachibana}, Tachibana managed to express the Bochner tensor in real coordinates. In particular, its components, with respect to a local orthonormal frame, are
	\begin{align*}
		B_{ijst}=&R_{ijst}-\frac{1}{n+4}[R_{is}\delta_{jt}+R_{jt}\delta_{is}-R_{it}\delta_{js}-R_{js}\delta_{it}\\
		&+J^r_iR_{rs}J^t_j+J^r_jR_{rt}J_i^s-J^r_iR_{rt}J^s_j-J^r_jR_{rs}J^t_i+2J^r_iR_{rj}J_s^t+2J^j_iJ^r_sR_{rt}]\\
		&+\frac{S}{(n+2)(n+4)}[\delta_{is}\delta_{jt}-\delta_{it}\delta_{js}+J^s_iJ^t_j-J^t_iJ^s_j+2J^j_iJ^t_s].
	\end{align*}
	We remark that, using real coordinates, the Bochner tensor satisfies the following identities, that can be obtained by a straightforward computation:
	\begin{align*}
		&B_{ijst}=-B_{jist}=B_{jits};\\
		&B_{ijst}=B_{stij};\\
		&B_{ijst}+B_{istj}+B_{itjs}=0;\\
		&B_{pjpt}=0;\\
		&B_{ijst}=J^p_iJ^q_jB_{pqst},
	\end{align*}
	and the last identity means that $\mathrm{B}$ is $J$-invariant, that is
	\begin{align*}
		B(JX,JY,Z,W)=B(X,Y,Z,W)\quad\forall X,Y,Z,W\in\mathfrak{X}(M).
	\end{align*}
	When $(M^{n},J,g)$, $n=2m$, is K\"{a}hler-Einstein, the Riemann curvature tensor rewrites just in terms of the Bochner tensor and of the value of the complex scalar curvature: 
	\begin{align*}
		R_{i\ol{j}k\ol{l}}=&B_{i\ol{j}k\ol{l}}+\frac{R}{m(m+1)}\pa{g_{i\ol{j}}g_{k\ol{l}}+g_{k\ol{j}}g_{i\ol{l}}}
	\end{align*}
	or, equivalently, in real coordinates:	
	\begin{align*}
		R_{ijst}=B_{ijst}+\frac{S}{n(n+2)}\pa{\delta_{is}\delta_{jt}-\delta_{it}\delta_{js}+J^s_iJ^t_j-J^t_iJ^s_j+2J^j_iJ^t_s}.
	\end{align*} 
	Moreover, it satisfies the following Bochner-Weitzenb\"{o}ck formula (see \cite{ChongDongLinRen} for a proof)
	\begin{align}\label{bw for b}
		\frac{1}{2}\Delta\abs{\boc}^2&=\abs{\nabla \boc}^2-8B_{\ol{j}pq\ol{l}}B_{\ol{p}ik\ol{q}}B_{\ol{i}jl\ol{k}}+16B_{i\ol{l}\ol{p}q}B_{p\ol{q}\ol{j}k}B_{j\ol{k}\ol{i}l}+\frac{2}{m}R\abs{B}^2\\
		&=:\abs{\nabla \boc}^2-2\mathcal{B}+\frac{2}{m}R\abs{B}^2,\notag
	\end{align}
	where
	\begin{align}\label{mathcalboc}
		\mathcal{B}:=4B_{\ol{j}pq\ol{l}}B_{\ol{p}ik\ol{q}}B_{\ol{i}jl\ol{k}}-8B_{i\ol{l}\ol{p}q}B_{p\ol{q}\ol{j}k}B_{j\ol{k}\ol{i}l}
	\end{align}
	is a cubic term in Bochner. Introducing two $m^2\times m^2$ Hermitian matrices $H$ and $K$ as
	\begin{align*}
		H=\pa{B_{\ol{k}ij\ol{l}}},\quad K=\pa{B_{i\ol{j}\ol{k}l}}
	\end{align*}
	we have that \eqref{bw for b} can be rewritten as
	\begin{align*}
		\frac{1}{2}\Delta\abs{\boc}^2=\abs{\nabla \boc}^2-8\mathrm{tr}(H^3)+16\mathrm{tr}(K^3)+\frac{2}{m}R\abs{B}^2
	\end{align*}
	and Lemma 2.4 of \cite{Huisken} (for more details see also Section 4 of \cite{ChongDongLinRen}; see e.g. equations (4.4) and (4.5)), implies
	\begin{align}\label{estimate mathcal B}
		\abs{\mathcal{B}}\leq\frac{3(m^2-2)}{2m\sqrt{m^2-1}}\abs{\boc}^3.
	\end{align}
	In fact, we have that $\mathrm{tr}(H)=0=\mathrm{tr}(K)$ and since $\abs{\boc}^2=4B_{i\ol{j}k\ol{l}}\ol{B_{i\ol{j}k\ol{l}}}$, 
	\begin{align*}
		\mathrm{tr}(H^3)\leq\frac{m^2-2}{8m\sqrt{m^2-1}}\abs{\boc}^3,\\
		\mathrm{tr}(K^3)\leq\frac{m^2-2}{8m\sqrt{m^2-1}}\abs{\boc}^3.
	\end{align*}
	and hence \eqref{estimate mathcal B}.
	Similarly to the K\"{a}hler case, it is possible to define an analogue of the Weyl tensor when considering a Sasakian manifold; this counterpart of the Weyl tensor is known as \emph{contact Bochner tensor}. 
	\begin{defi}[Contact Bochner tensor]
		Let $(M,\phi,\xi,\eta,g)$ be a Sasaki manifold of dimension $n=2m+1\geq 5$, then the \emph{contact Bochner tensor} is the $(0,4)$-tensor whose components, with respect to a local orthonormal frame, are
		\begin{align*}
			B_{ijst}=&R_{ijst}-\frac{1}{n+3}[R_{is}\delta_{jt}+R_{jt}\delta_{is}-R_{it}\delta_{js}-R_{js}\delta_{it}\\
			&+R_{ir}\phi^r_s\phi_{jt}+R_{jr}\phi^r_t\phi_{is}-R_{ir}\phi^r_t\phi_{js}-R_{jr}\phi^r_s\phi_{it}+2R_{ir}\phi^r_j\phi_{st}+2\phi_{ij}R_{sr}\phi^r_t\notag\\
			&-R_{is}\eta_j\eta_t-R_{jt}\eta_i\eta_s+R_{it}\eta_j\eta_s+R_{js}\eta_i\eta_t]+\frac{k+n-1}{n+3}[\phi_{is}\phi_{jt}-\phi_{it}\phi_{js}+2\phi_{ij}\phi_{st}]\notag\\
			&\frac{k-4}{n+3}[\delta_{is}\delta_{jt}-\delta_{it}\delta_{js}]-\frac{k}{n+3}[\delta_{is}\eta_j\eta_t+\delta_{jt}\eta_i\eta_s-\delta_{it}\eta_j\eta_s-\delta_{js}\eta_i\eta_t],\notag
		\end{align*}
		where $k=\frac{S+n-1}{n+1}$. 
	\end{defi}
	Analogously to the Bochner tensor, the contact Bochner tensor satisfies some symmetries, more precisely:
	\begin{align*}
		&B_{ijst}=-B_{jist}=B_{jits};\\
		&B_{ijst}+B_{istj}+B_{itjs}=0;\\
		&B_{ijst}=B_{stij};\\
		&\xi^pB_{pjst}=0;\\
		&B_{ijst}=\phi^p_i\phi^q_jB_{pqst};\\
		&B_{pjpt}=0.
	\end{align*}
	\section{Isolation and stability results for the Weyl functional on Ricci-flat ALE manifolds}\label{Sec ALE}
	The aim of this section is to first provide an isolation result for the Weyl functional on Ricci-flat ALE manifolds of dimension $n$. 
	In the second part of the section we will compare this result to a stability one, finding a small gap in which the manifold is stable but not flat.
	\begin{proof}[Proof of Theorem 1.1]
	Assume that $\weyl\not\equiv 0$.
	We recall that on any Einstein manifold the Weyl tensor satisfies the following Bochner-Weitzenb\"{o}ck formula		
	\begin{align}\label{BW}
		\dfrac{1}{2}\Delta\abs{\weyl}^2=
		\abs{\nabla\weyl}^2+\dfrac{2S}{n}\abs{\weyl}^2-
		2Q,
	\end{align}
	where $Q=
	2W_{pqrs}W_{ptru}W_{qtsu}+\frac{1}{2}
	W_{pqrs}W_{pqtu}W_{rstu}$.
	Choose $q\in( \frac{1}{2},1]$and set $f:=\abs{\weyl}^{2q}$.
	Outside the zero set of $\abs{\weyl}$, we have 
	\begin{align*}
		\Delta f&=4q(q-1)f^{1-\frac{1}{q}}\abs{\nabla\abs{\weyl}}^2+qf^{1-\frac{1}{q}}\Delta\abs{\weyl}^2
		\\			&
		=4q(q-1)f^{1-\frac{1}{q}}\abs{\nabla\abs{\weyl}}^2+qf^{1-\frac{1}{q}}\pa{2\abs{\nabla\weyl}^2-4Q}.
	\end{align*}
	Making use of the refined Kato inequality for Einstein manifolds (see \cite{Bando} for a proof), which is
	\[\abs{\nabla\abs{\weyl}}\leq\sqrt{\frac{n-1}{n+1}}\abs{\nabla\weyl},\]
	we get
	\begin{align*}
		\Delta f&\geq2q\pa{2(q-1)+\frac{n+1}{n-1}}f^{1-\frac{1}{q}}\abs{\nabla\abs{\weyl}}^2-4qf^{1-\frac{1}{q}}Q.
	\end{align*}
	Thus, integrating over a compact set $K\subset M$ with smooth boundary satisfying the condition $K\cap (\abs{\weyl}^{-1}(0))=\emptyset$, we get
	\begin{align*}
		\int_{K}\Delta f\; dV_g&\geq\int_{K}\pa{2q\pa{2(q-1)+\frac{n+1}{n-1}}f^{1-\frac{1}{q}}\abs{\nabla\abs{\weyl}}^2-4qf^{1-\frac{1}{q}}Q}dV_g,
	\end{align*}
	and by the divergence theorem we obtain
	\begin{align}\label{divergence_inequality}
		\int_{\partial K} g(\nabla f,\nu)\; d\sigma_g+
		4q\int_{ K} f^{1-\frac{1}{q}}Q\; dV_g
		&\geq 2q\pa{2(q-1)+\frac{n+1}{n-1}}\int_{K}f^{1-\frac{1}{q}}\abs{\nabla\abs{\weyl}}^2dV_g,
	\end{align}
	where $\nu$ is the outward unit normal. 
	
	Now we want to show that the left-hand side admits a uniform bound for all  compact $K\subset M$ with $K\cap (\abs{\weyl}^{-1}(0))=\emptyset$. 
	Observe first that the integrands on the left-hand side are bounded, since
	\begin{align*}
		|g(\nabla f,\nu)|\leq |\nabla f|\leq 2q |\weyl|^{2q-1}|\nabla \weyl|\leq C,\qquad
		|f^{1-\frac{1}{q}}Q|\leq C|\weyl|^{2q+1}\leq C.
	\end{align*}
	Here, we  also used that $q>\frac{1}{2}$. In order to study the decay of the integrands at infinity, let us recall that every $n$-dimensional Ricci-flat ALE manifold is ALE of order $n$ (see \cite{KronckeSzabo}) and therefore, $|\weyl|=O(r^{-n-2})$ and $|\nabla\weyl|=O(r^{-n-3})$ as $r\to\infty$. This implies that 
	\begin{align*}
		|g(\nabla f,\nu)|\leq C r^{-(n+2)(2q-1)-n-3}\leq Cr^{-n-3} ,\qquad |f^{1-\frac{1}{q}}Q|\leq  C r^{-(n+2)(2q+1)}\leq C r^{-(n+2)}.
	\end{align*}
	Using all these estimates, we can bound both integrands (which are defined on $M\setminus (\abs{\weyl}^{-1}(0))$) by a bounded and continuous function $F:M\to \erre$ which vanishes on $\abs{\weyl}^{-1}(0)$ and satisfies $|F|=O(r^{-(n+2)})$ as $r\to\infty$.
	We recall that Einstein metrics are real-analytic in suitable coordinates (see e.g.\ \cite[Theorem 5.26]{Besse}). Therefore, by a result of DeTurck and Kazdan (see e.g.\ \cite[Theorem 5.26]{Besse}), $M$ has an atlas with real	analytic transition functions and in the corresponding charts, $|W|^2$ is a real analytic function. 
	We can conclude that the zero set of the function $\abs{\weyl}$ has zero measure if $\weyl$ is not identically zero (see e.g.\ \cite[p.\ 240]{Federer}). Moreover, $\abs{\weyl}$ cannot vanish on an open subset of $M$ unless $(M,g)$ is flat.\\
	Now take an exhaustion of $M\setminus (\abs{\weyl}^{-1}(0))$ by compact subsets \[K_\eps:=\set{x\in M\,:\, \abs{\weyl}(x)\geq\eps>0}\]
	(observe that these sets are compact since they are closed and bounded, due to the decay of the Weyl tensor at infinity). We point out that, $\abs{\weyl}$ is smooth on $K_\eps$ and due to Sard's Theorem, for almost every $\eps$,
	we have that $\partial K_\eps$ is a smooth hypersurface and the outward unit normal $\nu$ can be expressed as
	\[\nu=-\frac{\nabla\abs{\weyl}}{\abs{\nabla\abs{\weyl}}}.\]
	As a consequence, considering $K_\eps$, \eqref{divergence_inequality} rewrites as
	\begin{align*}
		\int_{\partial K_\eps} -g\pa{\nabla f,\frac{\nabla\abs{\weyl}}{\abs{\nabla\abs{\weyl}}}}\; d\sigma_g+
		4q\int_{ K_\eps} f^{1-\frac{1}{q}}Q\; dV_g
		&\geq 2q\pa{2(q-1)+\frac{n+1}{n-1}}\int_{K_\eps}f^{1-\frac{1}{q}}\abs{\nabla\abs{\weyl}}^2dV_g,
	\end{align*}
	that is
	\begin{align*}
		-\int_{\partial K_\eps} 2q\abs{\weyl}^{2q-1}\abs{\nabla\abs{\weyl}}\; d\sigma_g+
		4q\int_{ K_\eps} f^{1-\frac{1}{q}}Q\; dV_g
		\geq 2q\pa{2(q-1)+\frac{n+1}{n-1}}\int_{K_\eps}f^{1-\frac{1}{q}}\abs{\nabla\abs{\weyl}}^2dV_g
	\end{align*}
	and since
	\[-\int_{\partial K_\eps} 2q\abs{\weyl}^{2q-1}\abs{\nabla\abs{\weyl}}\,d\sigma\leq 0,\]
	we deduce the validity of
	\begin{align}
	\label{inequality_Keps}
		4q\int_{ K_\eps} f^{1-\frac{1}{q}}Q\; dV_g
		\geq 2q\pa{2(q-1)+\frac{n+1}{n-1}}\int_{K_\eps}f^{1-\frac{1}{q}}\abs{\nabla\abs{\weyl}}^2dV_g.
	\end{align}
	By previous estimates, we know that $f^{1-\frac{1}{q}}Q\in L^1(M)$ and by dominated convergence theorem we have
	\begin{align}
	\label{limit LHS}
\lim_{\eps\ra 0}\int_{ K_\eps} f^{1-\frac{1}{q}}Q\; dV_g&=\int_{M\setminus (\abs{\weyl}^{-1}(0))}f^{1-\frac{1}{q}}Q\; dV_g
		=\int_Mf^{1-\frac{1}{q}}Q\; dV_g,
	\end{align}
	where the last equality follows from the fact that the zero set of $\weyl$ has zero measure.
	For every subset $A\subset M$, let $\chi_A$ be the characteristic function of $A$, given by
		\begin{align*}
		\chi_{A}:=\begin{cases}
			1 \quad\text{on }A,\\
			0\quad\text{on }M\setminus A.
		\end{cases}
	\end{align*}
	Define
	\[F_\eps:=f^{1-\frac{1}{q}}\abs{\nabla\abs{\weyl}}^2\chi_{K_\eps}.\]
	Since $f^{1-\frac{1}{q}}\abs{\nabla\abs{\weyl}}^2\geq0$ on $K_\eps$ and due to the fact that for every $\eps_1,\eps_2$ such that $\eps_2\leq\eps_1$, we get $K_{\eps_1}\subseteq K_{\eps_2}$,
	we deduce
	\[F_{\eps_2}=F_{\eps_1}+f^{1-\frac{1}{q}}\abs{\nabla\abs{\weyl}}^2\chi_{K_{\eps_2}\setminus K_{\eps_2}}\geq F_{\eps_1}.\]
	Taking $\eps\ra0$, we get
	$F_{\eps}\ra f^{1-\frac{1}{q}}\abs{\nabla\abs{\weyl}}^2\chi_{M\setminus\pa{\abs{\weyl}^{-1}(0)}}$.
	We highlight that $F_{\eps}\in L^1(M)$ since $F_{\eps}$ has compact support and $\abs{\weyl}$ is nowhere vanishing on $K_{\eps}$. Moreover, for every $\eps>0$, by monotone convergence theorem (and using that $\abs{\weyl}^{-1}(0)$ has zero measure), we deduce 
	\begin{align*}
		\int_{M}f^{1-\frac{1}{q}}\abs{\nabla\abs{\weyl}}^2\,dV_g&=\int_{M}f^{1-\frac{1}{q}}\abs{\nabla\abs{\weyl}}^2\chi_{M\setminus\pa{\abs{\weyl}^{-1}(0)}}\,dV_g\\&=\lim_{\eps\ra 0}\int_{M} F_\eps\,dV_g\\
		&=\lim_{\eps\ra 0}\int_{K_\eps}f^{1-\frac{1}{q}}\abs{\nabla\abs{\weyl}}^2\,dV_g,
	\end{align*}
	which is finite since it is bounded by the left-hand side of \eqref{limit LHS}. As a consequence, we deduce
	\begin{align}\label{integrate_Weyl}
		4q\int_{ M} f^{1-\frac{1}{q}}Q\; dV_g
		&\geq 2q\pa{2(q-1)+\frac{n+1}{n-1}}\int_{M}f^{1-\frac{1}{q}}\abs{\nabla\abs{\weyl}}^2dV_g,
	\end{align}
	which yields
	\begin{align*}
		0&\geq \int_M\sq{2q\pa{2(q-1)+\frac{n+1}{n-1}}\abs{\weyl}^{2q-2}\abs{\nabla\abs{\weyl}}^2-4q\abs{\weyl}^{2q-2}Q}dV_g\\
		&=\int_M\sq{\frac{2}{q}\pa{2(q-1)+\frac{n+1}{n-1}}\abs{\nabla\abs{\weyl}^q}^2-4q\abs{\weyl}^{2q-2}Q}dV_g\\
		&\geq\int_M\sq{\frac{2}{q}\pa{2(q-1)+\frac{n+1}{n-1}}\abs{\nabla\abs{\weyl}^q}^2-4qC(n)\abs{\weyl}^{2q+1}}dV_g.
	\end{align*}
	To proceed, note first that by \cite[Theorem 1.1]{SharpSobolev}, we have for every smooth compactly supported function $v$, that
	\begin{align}\label{sharp sob}
		||\nabla v||_{L^2}^2\geq D(n)^{-2}||v||_{L^p}^2,
	\end{align}
	where $p=\frac{2n}{n-2}$ and
	\begin{align}\label{def of D}
		D(n)&=\pa{\lim_{r\ra+\infty}\frac{\mathrm{Vol_g}(B_r(x))}{\omega_nr^n}}^{-\frac{1}{n}}\sqrt{\frac{1}{(n-2)n\pi}}\pa{\frac{\Gamma(n)}{\Gamma(\frac{n}{2})}}^{\frac{1}{n}}
		\\			&		
		=			\frac{1}{\sqrt{\pi n(n-2)}}\pa{\frac{\Gamma(n)}{\Gamma\pa{\frac{n}{2}}}}^{\frac{1}{n}}\abs{\Gamma}^{\frac{1}{n}}
		\notag
		.
	\end{align} 
	Since $C^{\infty}_{0}\subset H^2_1$ is dense (see e.g.\ \cite[Theorem 2.7]{HebeySob}), the inequality also holds for $v\in H^2_1$.
	For $v=\abs{\weyl}^{q}$, we know by the decay of $\weyl$ that $v\in L^2(M)$ and using \eqref{integrate_Weyl}, we also see that $\nabla v\in L^2(M)$.
	Hence, we have
	\begin{align}\label{sharp sob for v}
		||\nabla \abs{\weyl}^q||_{L^2}^2\geq D(n)^{-2}||\abs{\weyl}^q||_{L^p}^2,
	\end{align}
	where $p=\frac{2n}{n-2}$.	
	Then, using H\"{o}lder inequality and \eqref{sharp sob for v}, we deduce
	\begin{align*}
		0&\geq\int_M\sq{\frac{2}{q}\pa{2(q-1)+\frac{n+1}{n-1}}\abs{\nabla\abs{\weyl}^q}^2}dV_g-4qC(n)||\weyl||_{L^\frac{n}{2}}||\,\abs{\weyl}^q||_{L^p}^2\\
		&\geq\frac{2}{q}\pa{2(q-1)+\frac{n+1}{n-1}}D(n)^{-2}||\,\abs{\weyl}^q||_{L^p}^2-4qC(n)||\weyl||_{L^\frac{n}{2}}||\,\abs{\weyl}^q||_{L^p}^2\\
		&=\sq{\frac{2}{q}\pa{2(q-1)+\frac{n+1}{n-1}}D(n)^{-2}-4qC(n)||\weyl||_{L^\frac{n}{2}}}||\,\abs{\weyl}^q||_{L^p}^2,
	\end{align*}
	which is equivalent to
	\begin{align}\label{rem}
		||\weyl||_{L^\frac{n}{2}}\geq\frac{1}{2q^2}\pa{2(q-1)+\frac{n+1}{n-1}}D(n)^{-2}C(n)^{-1}.
	\end{align}
	For $n\geq6$, substituting $q=2-\frac{n+1}{n-1}>\frac{1}{2}$ yields
	\begin{align*}
		||\weyl||_{L^\frac{n}{2}}\geq\frac{1}{2\pa{2-\frac{n+1}{n-1}}^2}\sq{2\pa{1-\frac{n+1}{n-1}}+\frac{n+1}{n-1}}D(n)^{-2}C(n)^{-1},
	\end{align*}
	which implies the statement for $n\geq 6$.
	For $n=4$ we set $q=\frac{1}{2}+\delta$, which yields 
	\begin{align*}
		||\weyl||_{L^\frac{n}{2}}\geq\frac{1}{2(\frac{1}{2}+\delta)^2}\pa{2\pa{\frac{1}{2}+\delta-1}+\frac{5}{3}}D(4)^{-2}C(4)^{-1}.
	\end{align*}
	Then, passing to the limit for $\delta\ra0$, we deduce the validity of the statement.
\end{proof}
	\begin{rem}
		The value of $q$ for the case $n\geq 6$ in the proof of Theorem \ref{isolation thm intro} is obtained maximizing the function
		\[f(q):=\frac{1}{2q^2}\pa{2(q-1)+\frac{n+1}{n-1}}.\]
		In this way, we obtain the best constant for equation \eqref{rem}. 
	\end{rem}
	Note that the proof of the theorem was mainly based on the use of the Bochner-Weitzenb\"{o}ck formula together with the sharp Sobolev inequality \eqref{sharp sob}. This last inequality can be a fundamental tool to find a stability criterion for ALE manifolds which can be expressed in terms of the Weyl functional and the constant $D(n)$.
	\begin{proof}[Proof of Theorem \ref{stability thm intro}]
		Integrating \eqref{ineq weyl h point} over $M$, for every $h$ compactly supported $TT$ tensor, we get
		\begin{align}\label{ineq weyl h}
			\int_M\pa{W_{ijkl}h_{ik}h_{jl}}dV_g\leq \sqrt{\frac{(n+2)(n-1)}{n(n+1)}}||\weyl||_{L^{\frac{n}{2}}}||h||_{L^p}^2,
		\end{align}
		where we have used H\"{o}lder inequality and $p=\frac{2n}{n-2}$. Exploiting \eqref{ineq weyl h}, it follows
		\begin{align*}
			(\Delta_E h,h)_{L^2}&=(-\Delta h-2\mathring{R}h,h)_{L^2}\\
			&=(-\mathrm{tr}\nabla^2 h-2\mathring{\weyl}h,h)_{L^2}\\
			&=||\nabla h||_{L^2}^2-2\int_MW_{ijkl}h_{ik}h_{jl}dV_g\\
			&\geq||\nabla \abs{h}||_{L^2}^2- 2\sqrt{\frac{(n+2)(n-1)}{n(n+1)}}||\weyl||_{L^{\frac{n}{2}}}||h||_{L^p}^2\\
			&\geq D(n)^{-2}||h||_{L^p}^2-2\sqrt{\frac{(n+2)(n-1)}{n(n+1)}}||\weyl||_{L^{\frac{n}{2}}}||h||_{L^p}^2,
		\end{align*}
		where $p=\frac{2n}{n-2}$. In the last two steps we have used
		equation \eqref{ineq weyl h} and the sharp Sobolev inequality \eqref{sharp sob}, and Kato's inequality
		\[\abs{\nabla\abs{h}}\leq\abs{\nabla h},\]
		in fact, \eqref{sharp sob} extends to compactly supported functions in $H^2_1(M)$ and since $\abs{h}$ is a Lipschitz function Kato's inequality holds almost everywhere implying the validity of \eqref{sharp sob} for $h$ (\cite[Proposition 3.49]{Aubin}, \cite[Lemma 7.7]{Lee1}). Therefore, the manifold is stable if the right-hand side is non-negative, that is equivalent to
		\begin{align*}
			||\weyl||_{L^{\frac{n}{2}}}\leq\frac{1}{2}D(n)^{-2}\sqrt{\frac{n(n+1)}{(n+2)(n-1)}}.
		\end{align*}
	\end{proof}
	Comparing the two inequalities we have found for the Weyl functional we get a small gap where the stability theorem holds. Note that, the proof of Theorem \ref{stability thm intro} holds also in dimension four; however there is no gap where we have the validity of \eqref{stability} for $n=4$,  since the constant found in Theorem \ref{isolation thm intro} is larger. For $q=\frac{1}{2}$ we have
	\begin{align*}
		D(4)^{-2}\frac{1}{2}\sqrt{\frac{10}{9}}=
		D(n)^{-2}\frac{1}{2}\sqrt{\frac{n(n+1)}{(n+2)(n-1)}}\leq D(n)^{-2}\frac{4}{3}C(n)^{-1}= D(4)^{-2}\frac{16}{3\sqrt{6}}.
	\end{align*}
	\subsection{Stability results \emph{via} Gauss-Bonnet formula for ALE six-dimensional manifolds} \label{s-alesix}
		Let $(M,g)$ be Riemannian manifold, given a local orthonormal coframe $\set{\theta^i}_{i=1}^6$, there exists a unique family of one-forms $\omega_{ij}$ (known as \emph{Levi-Civita connection forms}), determined by the \emph{first structure equations}
	\begin{align*}
		d\theta^i=-\omega_{ij}\wedge\theta^j;\quad\omega_{ij}=-\omega_{ji}.
	\end{align*}
	We now introduce a family of $2$-forms, the \emph{curvature forms} $\set{\Omega_{ij}}$, defined \emph{via} the \emph{second structure equation}
	\begin{align*}
		\Omega_{ij}=d\omega_{ij}-\omega_{ik}\wedge\omega_{ki}.
	\end{align*}
	Using the basis $\set{\theta^i\wedge\theta^j}$ of the bundle of $2$-forms, they can be written as
	\begin{align*}
		\Omega_{ij}=\frac{1}{2}R_{ijkl}\theta^k\wedge\theta^l,
	\end{align*}
	where $R_{ijkl}$ are the coefficients of the Riemann curvature tensor (for a more accurate discussion see e.g., \cite[Chapter 1]{CMbook}).
	Given a compact Riemannian manifold of dimension $n=2m$, the generalized Gauss-Bonnet formula is given by
	\begin{align*}
		\chi(M)=\int_M\Omega dV_g,
	\end{align*} 
	where
	\begin{align*}
		\Omega=\frac{(-1)^m}{2^{2m}\pi^mm!}\eps_{i_1...i_n}\Omega_{i_1i_2}\wedge...\wedge\Omega_{i_{n-1}i_n},
	\end{align*}
	and we are adopting Einstein's summation convention. It is possible to rewrite $\Omega$ in terms of the coefficients of the Riemann curvature tensor, in fact, since
	$\Omega_{ij}=\frac{1}{2}R_{ijkl}\theta^k\wedge\theta^l$, we have
	\begin{align*}
		\chi(M)=\frac{(-1)^m}{2^{3m}\pi^mm!}\int_M\eps_{i_1...i_n}\eps_{j_1...j_n}R_{i_1i_2j_1j_2}...R_{i_{n-1}i_n...j_{n-1}j_n}dV_g.
	\end{align*}
	More in general, for an $n$-dimensional manifold we denote $SM$ the $(2n-1)$-dimensional manifold of unit tangent vectors. For each point of the fiber, $(x,\xi)\in SM$, we associate to it an orthonormal coframe $\theta^i$ of $M$, such that $\theta^1$ is the dual of $\xi$. We then consider the associated Levi-Civita connection forms $\omega_{ij}$ and the curvature forms $\Omega_{ij}$. Let
	\begin{align*}
		\Pi=\frac{1}{\pi^m}\sum_{\lambda=0}^{m-1}(-1)^{\lambda}\frac{1}{1\cdot3...(2m-2\lambda-1)2^{m+\lambda}\lambda!}\Phi_{\lambda},
	\end{align*}
	where
	\begin{align*}
		\Phi_{\lambda}=\eps_{i_1...i_{n-1}}\Omega_{i_1i_2}\wedge...\wedge\Omega_{i_{2\lambda-1}i_{2\lambda}}\wedge\omega_{i_{2\lambda+1}n}\wedge...\wedge\omega_{i_{n-1}n}.
	\end{align*}
	This form was first introduced by Chern in \cite{Chern} and it is an $(n-1)$-form on $SM$. When we consider a Riemannian manifold with boundary, the generalized Gauss-Bonnet formula is given by (see e.g. \cite{Chern, DaiWei, Rosenberg, Zhu} for more details)
	\begin{align*}
		\chi(M)=\int_M\Omega+\int_{\partial M}\nu^*\Pi,
	\end{align*}
	where $\nu^*$ is the section of $SM$ over the boundary given by the outward unit normal and it can be rewritten as (see \cite{Rosenberg} for a more detailed discussion)
	\begin{align*}
		\chi(M)=\int_M\Omega+\int_{\partial M}\Pi.
	\end{align*}
	In particular, on a six dimensional Einstein manifold with boundary we have
	\begin{align*}
		384\pi^3\chi(M)=&\int_M\pa{\frac{2}{75}S^3-2\abs{\nabla\weyl}^2-\frac{7}{15}S\abs{\weyl}^2+6W_{ijkl}W_{klpq}W_{pqij}}dV_g\\
		&+\frac{1}{\pi^3}\int_{\partial M}\eps_{i_1i_2i_3i_4i_5}\Bigg(\frac{1}{15\cdot2^3}\omega_{i_16}\wedge...\wedge\omega_{i_56}-\frac{1}{3\cdot2^4}\Omega_{i_1i_2}\wedge\omega_{i_36}\wedge\omega_{i_46}\wedge\omega_{i_56}\\
		&+\frac{1}{2^6}\Omega_{i_1i_1}\wedge\Omega_{i_3i_4}\wedge\omega_{i_56}\Bigg)\\
		=:&\int_M\pa{\frac{2}{75}S^3-2\abs{\nabla\weyl}^2-\frac{7}{15}S\abs{\weyl}^2+6W_{ijkl}W_{klpq}W_{pqij}}dV_g+I(\partial M).
	\end{align*}
	\begin{rem}
		On ALE Ricci-flat manifolds 
		\begin{align*}
			I(\partial M)=I(\mathbb{S}^5/\Gamma),
		\end{align*}
		as a consequence, this quantity depends on $\frac{1}{\Gamma}$ and on the value of the integral on the sphere (analogously to dimension $4$, \cite{Nakajima}).
	\end{rem}
	\begin{proof}[Proof of Theorem \ref{t-ec}]
		When $(M,g)$ is ALE and Ricci-flat, the Gauss-Bonnet formula for manifolds with boundary in dimension $6$, reduces to
		\begin{align*}
			384\pi^3\chi(M)&=\int_M\pa{-2\abs{\nabla W}^2+6W_{ijkl}W_{klpq}W_{pqij}}dV_g+I(\mathbb{S}^5/\Gamma)\\
			&=-2||\nabla\weyl||_{L^2}^2+6\int_MW_{ijkl}W_{klpq}W_{pqij}dV_g+I(\mathbb{S}^5/\Gamma).
		\end{align*}
		Using Sobolev and Kato inequalities, we deduce
		\begin{align*}
			384\pi^3\chi(M)\leq-\frac{14}{5}D(6)^{-2}||\weyl||_{L^3}^2+6\int_MW_{ijkl}W_{klpq}W_{pqij}dV_g+I(\mathbb{S}^5/\Gamma),
		\end{align*}
		that is
		\begin{align}\label{ineq with euler char 1}
			||\weyl||_{L^3}^2&\leq\frac{5}{14}D(6)^{2}\pa{6\int_MW_{ijkl}W_{klpq}W_{pqij}dV_g+I(\mathbb{S}^5/\Gamma)-384\pi^3\chi(M)}\\
			&=\frac{5}{14}D(6)^{2}\pa{6\int_M\mathrm{tr}(\weyl^3)dV_g+I(\mathbb{S}^5/\Gamma)-384\pi^3\chi(M)},\notag
		\end{align}
		then, by \eqref{t-eulc} we deduce that $(M,g)$ satisfies the stability criterion of Theorem \ref{stability thm intro} and hence it is stable.
	\end{proof}
	\section{Stability results for the Weyl functional on PE manifolds}\label{Sec PE}
	In this section, we are interested in stability results for PE manifolds with positive Yamabe invariant. In particular we are going to provide a stability criterion involving $Y(M,[g])$ and $||\weyl||_{L^p}$. The main tool we use here is the inequality 
	\begin{align}\label{nabla h AH}
		||\nabla h||_{L^2}^2\geq\lambda_0||h||_{L^2}^2,
	\end{align}
	where $\lambda_0$ is the bottom of the $L^2$-spectrum for the Laplace operator; note that \eqref{nabla h AH} holds for every compactly supported tensor $T$ by Lemma 7.7 of \cite{Lee1}. It was shown by Mazzeo (see \cite{Mazzeo}) that for AH metrics $\lambda_0\leq\frac{(n-1)^2}{4}$. In \cite{LeeSpectrum}, Lee proved that on PE manifolds whose conformal infinity has non-negative Yamabe invariant $\lambda_0=\frac{(n-1)^2}{4}$. 
	Moreover, from the definition of $Y(M,[g])$, for every $u\in C^{\infty}_0$, $u\neq 0$, we have the validity of
	\begin{align*}
		||u||_{L^{\frac{2n}{n-2}}}^2Y(M,[g])\leq\frac{4(n-1)}{n-2}||\nabla u||_{L^2}^2+\int_MSu^2dV_g,
	\end{align*}
	which, on PE manifolds, rewrites as
	\begin{align}\label{sobolev yamabe}
		||u||_{L^{\frac{2n}{n-2}}}^2Y(M,[g])\leq\frac{4(n-1)}{n-2}||\nabla u||_{L^2}^2-n(n-1)||u||_{L^2}^2.
	\end{align}
	\begin{rem}
		Inequality \eqref{sobolev yamabe} extends continuously to compactly supported functions $u\in H^2_1(M)$, moreover it holds also if we replace $u$ by any compactly supported tensor $T$ because $\abs{T}$ is a Lipschitz function and Kato's inequality 
		\[\abs{\nabla \abs{T}}\leq\abs{\nabla T}\]
		holds almost everywhere.
	\end{rem}
	\begin{proof}[Proof of Theorem \ref{t-stability PE}] 
		Note that, by definition of AH, $(M,g)$ is conformal to the interior of a compact manifold with boundary, therefore $||\weyl||_{L^\frac{n}{2}}$ is finite since it is conformally invariant. Hence, $\abs{\abs{\weyl}}_{L^p}$ is finite for every $p\in\sq{\frac{n}{2},\infty}$.\\
		Let $h$ be a $TT$ tensor. The Einstein operator, $\Delta_E$, satisfies the Bochner formula \eqref{bochner e oper D2}, which we recall here:
		\begin{equation}\label{DED2}
			(\Delta_Eh,h)_{L^2}=||D_2h||^2_{L^2}+(n-2)||h||^2_{L^2}-(\mathring{W}h,h)_{L^2}
		\end{equation}
		and, by its definition, we have
		\begin{equation}\label{DE}
			(\Delta_Eh,h)_{L^2}=||\nabla h||_{L^2}^2-2||h||_{L^2}^2-2(\mathring{\weyl}h,h).
		\end{equation}
		Let $\alpha\in[0,1]$, combining \eqref{DED2} and \eqref{DE} we have
		\begin{equation}\label{DEalpha}
			(\Delta_Eh,h)_{L^2}\geq\alpha||\nabla h||_{L^2}^2+\pa{(n-2)(1-\alpha)-2\alpha}||h||_{L^2}^2-(2\alpha+(1-\alpha))(\mathring{\weyl}h,h)
		\end{equation}
		and using \eqref{ineq weyl h point} and H\"{o}lder inequality, we obtain
		\begin{align*}
			(\Delta_E h,h)_{L^2}
			\geq&\alpha||\nabla h||_{L^2}^2+\pa{n(1-\alpha)-2}||h||_{L^2}^2-(\alpha+1)s(n)||\weyl||_{L^p}||h||_{L^q}^2,
		\end{align*}
		where $p\in\sq{\frac{n}{2},+\infty},\,q=\frac{2p}{p-1}\in\sq{2,\frac{2n}{n-2}}$. 
		By $L^q$-interpolation, we deduce
		\begin{align*}
			(\Delta_E h,h)_{L^2}
			\geq&\alpha||\nabla h||_{L^2}^2+\pa{n(1-\alpha)-2}||h||_{L^2}^2-(\alpha+1)s(n)||\weyl||_{L^p}||h||_{L^2}^{2t}||h||_{L^{\frac{2n}{n-2}}}^{2(1-t)}\\
			=&\alpha||\nabla h||_{L^2}^2+\pa{n(1-\alpha)-2}||h||_{L^2}^2-(\alpha+1)s(n)||\weyl||_{L^p}\lambda||h||_{L^2}^{2t}\lambda^{-1}||h||_{L^{\frac{2n}{n-2}}}^{2(1-t)},
		\end{align*}
		for a positive constant $\lambda$ and $t$ satisfying
		\[\frac{1}{q}=\frac{t}{2}+\frac{n-2}{2n}(1-t),\]
		that is
		\[t=1-\frac{n}{2p}.\]
		With an abuse of notation, we set
		\begin{align*}
				t\lambda^{\frac{1}{t}}=0,\quad \text{ for } t=0,\quad\text{ and }\quad
				(1-t)\lambda^{\frac{1}{t-1}}=0,\quad \text{ for } t=1,
		\end{align*}
		in order to include the cases $t=0$, $t=1$ in the following inequalities and computations, recovering the cases $p=\frac{n}{2}$ and $p=+\infty$.
		Exploiting Young inequality we deduce
		\begin{align*}
			(\Delta_E h,h)_{L^2}
			\geq&\alpha||\nabla h||_{L^2}^2+\pa{n(1-\alpha)-2}||h||_{L^2}^2\\
			&-(\alpha+1)s(n)||\weyl||_{L^p}\pa{t\lambda^{\frac{1}{t}}||h||_{L^2}^{2}+(1-t)\lambda^{\frac{1}{t-1}}||h||_{L^{\frac{2n}{n-2}}}^{2}}.
		\end{align*}
		Moreover, using Sobolev inequality (see formula \eqref{sobolev yamabe}) and \eqref{nabla h AH} we get
		\begin{align*}
			(\Delta_E h,h)_{L^2}
			\geq&\pa{\alpha-\pyamw(\alpha+1)(1-t)\lambda^{\frac{1}{t-1}}\frac{4(n-1)}{n-2}s(n)}||\nabla h||_{L^2}^2\\
			+&\sq{n(1-\alpha)-2+||\weyl||_{L^p}\pa{(1-t)\lambda^{\frac{1}{t-1}}\frac{n(n-1)}{Y(M,[g])}-t\lambda^{\frac{1}{t}}}(\alpha+1)s(n)}||h||_{L^2}^2\\
			\geq&\pa{\alpha(1-\tau)-\pyamw(\alpha+1)(1-t)\lambda^{\frac{1}{t-1}}\frac{4(n-1)}{n-2}s(n)}||\nabla h||_{L^2}^2\\
			+&\sq{n(1-\alpha)-2+\alpha\tau\lambda_0+||\weyl||_{L^p}\pa{(1-t)\lambda^{\frac{1}{t-1}}\frac{n(n-1)}{Y(M,[g])}-t\lambda^{\frac{1}{t}}}(\alpha+1)s(n)}||h||_{L^2}^2,
		\end{align*}
		where $\tau\in[0,1]$. For the sake of simplicity, we will write
		\[
		(\Delta_E h,h)_{L^2}
		\geq A||\nabla h||_{L^2}^2+B||h||_{L^2}^2,
		\]
		where 
		\begin{align*}
			A:=&\alpha(1-\tau)-\pyamw(\alpha+1)(1-t)\lambda^{\frac{1}{t-1}}\frac{4(n-1)}{n-2}s(n),\\
			B:=&n(1-\alpha)-2+\alpha\tau\lambda_0+||\weyl||_{L^p}\pa{(1-t)\lambda^{\frac{1}{t-1}}\frac{n(n-1)}{Y(M,[g])}-t\lambda^{\frac{1}{t}}}(\alpha+1)s(n).
		\end{align*}
		Let us take $\alpha$ and $\lambda$ such that
		\[\frac{\alpha}{\alpha+1}\lambda^{\frac{1}{1-t}}=\frac{(1-t)}{2}Y(M,[g])^{-t}\sq{8(n-1)}^t.\]
		In particular, set
		\begin{align}\label{e-alphalambda}
			\begin{cases}
				\lambda^{\frac{1}{1-t}}&= Y(M,[g])^{-t}\sq{8(n-1)}^t,\\
				\frac{\alpha}{\alpha+1}&=\frac{1-t}{2}=\frac{n}{4p}, \quad \text{i.e.,}\quad\alpha=\frac{n}{4p-n}\in\sq{0,1}
			\end{cases}
		\end{align}
		and
		\begin{align}\label{e-tau}
			\tau=1-\pyamw\frac{(\alpha+1)}{\alpha}\lambda^{\frac{1}{t-1}}\frac{4(n-1)}{n-2}s(n)(1-t).
		\end{align}
		Observe that, by \eqref{e-alphalambda} and \eqref{stab Lp}, $\tau\in[0,1]$, in fact
		\begin{align*}
			\tau\geq0\quad\Leftrightarrow\quad ||\weyl||_{L^p}\leq Y(M,[g])^{\frac{n}{2p}} \frac{(n-2)}{\sq{8(n-1)}^{\frac{n}{2p}}}s(n)^{-1}.
		\end{align*}
		Furthermore, $A=0$ by construction. On the other hand, Sobolev inequality \eqref{sobolev yamabe} yields
		\begin{equation}\label{simil poincare}
			||\nabla h||_{L^2}^2\geq\frac{n(n-2)}{4}||h||_{L^2}^2.
		\end{equation}
		Therefore $\lambda_0\geq\frac{n(n-2)}{4}$ and $B$ satisfies
		\begin{align*}
			B=&n(1-\alpha)-2+\alpha\lambda_0\pa{1-\pyamw\frac{(\alpha+1)}{\alpha}\lambda^{\frac{1}{t-1}}\frac{4(n-1)}{n-2}s(n)(1-t)}\\
			&+||\weyl||_{L^p}\pa{(1-t)\lambda^{\frac{1}{t-1}}\frac{n(n-1)}{Y(M,[g])}-t\lambda^{\frac{1}{t}}}(\alpha+1)s(n)\\
			\geq&n(1-\alpha)-2+\alpha\frac{n(n-2)}{4}\pa{1-\pyamw\frac{(\alpha+1)}{\alpha}\lambda^{\frac{1}{t-1}}\frac{4(n-1)}{n-2}s(n)(1-t)}\\
			&+||\weyl||_{L^p}\pa{(1-t)\lambda^{\frac{1}{t-1}}\frac{n(n-1)}{Y(M,[g])}-t\lambda^{\frac{1}{t}}}(\alpha+1)s(n)\\
			=& n(1-\alpha)-2+\alpha\frac{n(n-2)}{4}-||\weyl||_{L^p}\pa{1-\frac{n}{2p}}\lambda^{\frac{1}{t}}(\alpha+1)s(n)
		\end{align*}
		By \eqref{e-alphalambda},
		\[\lambda=Y(M,[g])^{-t(1-t)}\sq{8(n-1)}^{t(1-t)}\]
		and
		\[\lambda^{\frac{1}{t}}=Y(M,[g])^{t-1}\sq{8(n-1)}^{(1-t)}.\]
		Thus,
		\begin{align*}
			B&\geq n(1-\alpha)-2+\alpha\frac{n(n-2)}{4}-(\alpha+1)t||\weyl||_{L^p}Y(M,[g])^{t-1}\sq{8(n-1)}^{(1-t)}s(n).
		\end{align*}
		Therefore, it follows by \eqref{stab Lp}
		\begin{align*}
			B\geq& n(1-\alpha)-2+\alpha\frac{n(n-2)}{4}-(\alpha+1)t(n-2),
		\end{align*}
		which, by definition of $t$, is
		\begin{align*}
			B\geq& n(1-\alpha)-2+\alpha\frac{n(n-2)}{4}-(\alpha+1)\pa{1-\frac{n}{2p}}(n-2).
		\end{align*}
		Substituting $\alpha=\frac{n}{4p-n}$, we get
		\[B\geq\frac{n^3-2n^2-8n}{4(4p-n)},\]
		which is non-negative for $n\geq4$.
	\end{proof}
	\section{Stability results for compact Einstein and K\"{a}hler-Einstein manifolds}\label{Sec stability cpt}
	This section is devoted to stability results for compact Einstein and K\"{a}hler-Einstein manifolds with positive scalar curvature. The methods used in the proof of Theorem \ref{lpWcpt} and Theorem \ref{lpBcpt} are based on the that of Theorem 1.4 and 1.5 and Theorem 5.6 and 5.7 of \cite{Kroncke15}, respectively. In particular, for $p=\frac{n}{2}$ and $p=\infty$, we recover the already known results of \cite{Kroncke15}, with a slight improvement, due to the constant $s(n)$.\\
	In particular, we want to combine \eqref{bochner e oper D1} and the definition of $\Delta_E$ to obtain a result involving the $L^p$ norm of the Weyl (Bochner, respectively) tensor. One of the main tool we are going to use is the validity of Sobolev inequality, which we recall here 
	\begin{align*}
		\abs{\abs{h}}_{L^{\frac{2n}{n-2}}}^2&\leq\frac{1}{Y(M,[g])}\pa{\frac{4(n-1)}{n-2}\abs{\abs{\nabla h}}_{L^2}^2+S\abs{\abs{h}}_{L^2}^2},
	\end{align*}
	where $Y(M,[g])$ is the Yamabe invariant of $[g]$ on $M$. Since $(M,g)$ is  a Yamabe metric, we have
	\[Y(M,[g])=\mathrm{Vol}(M)^{\frac{2}{n}}S,\]
	thus,
	\begin{align}\label{sob cpt}
		\abs{\abs{h}}_{L^{\frac{2n}{n-2}}}^2\leq\mathrm{Vol}(M)^{-\frac{2}{n}}\frac{4(n-1)}{n-2}\frac{\abs{\abs{\nabla h}}_{L^2}^2}{S}+\mathrm{Vol}(M)^{-\frac{2}{n}}\abs{\abs{h}}_{L^2}^2,
	\end{align}
	\begin{proof}[Proof of Theorem \ref{lpWcpt}]
		Let $\alpha\in[0,1]$, we combine \eqref{bochner e oper D1}, which we recall here, that for every $TT$ compactly supported tensor, we have
		\begin{align*}
			(\Delta_E h,h)_{L^2}&=||D_1h||^2_{L^2}+2\frac{S}{n}\abs{\abs{h}}_{L^2}^2-4(\mathring{R}h,h)_{L^2}\\
			&=||D_1h||_{L^2}+\frac{2(n+1)}{n(n-1)}S\abs{\abs{h}}_{L^2}^2-4(\mathring{W}h,h)_{L^2}
		\end{align*}
		with the definition of $\Delta_E$ which, in terms of $\weyl$, is
		\begin{align*}
			(\Delta_E h,h)_{L^2}&=\abs{\abs{\nabla h}}_{L^2}^2+\frac{2S}{n(n-1)}\abs{\abs{h}}_{L^2}^2-2(\mathring{W}h,h)_{L^2},
		\end{align*}
		to obtain
		\begin{align}\label{stab1}
			(\Delta_E h,h)_{L^2}=&\alpha\abs{\abs{\nabla h}}_{L^2}^2+\sq{\alpha+(1-\alpha)(n+1)}\frac{2S}{n(n-1)}\abs{\abs{h}}_{L^2}^2\\
			&-2\sq{\alpha+2\pa{1-\alpha}}(\mathring{W}h,h)+(1-\alpha)\abs{\abs{D_1 h}}_{L^2}^2\notag\\
			\geq&\alpha\abs{\abs{\nabla h}}_{L^2}^2+\pa{n+1-n\alpha}\frac{2S}{n(n-1)}\abs{\abs{h}}_{L^2}^2-2\pa{2-\alpha}(\mathring{W}h,h).\notag
		\end{align}
		By \eqref{ineq weyl h point} and H\"{o}lder inequality 
		\[(\mathring{W}h,h)\leq s(n)\abs{\abs{\weyl}}_{L^p}\abs{\abs{h}}_{L^q}^2,\]
		with $q=\frac{2p}{p-1}\in\sq{2,\frac{2n}{n-2}}$.
		As for the proof of Theorem \ref{stab Lp}, with an abuse of notation, we set
		\begin{align*}
			t\lambda^{\frac{1}{t}}=0,\quad \text{ for } t=0,\quad\text{ and }\quad
			(1-t)\lambda^{\frac{1}{t-1}}=0,\quad \text{ for } t=1,
		\end{align*}
		and exploiting $L^q$ interpolation and Young inequality, we deduce
		\begin{align}\label{interpolation Y}
			(\mathring{W}h,h)\leq s(n)\abs{\abs{\weyl}}_{L^p}\abs{\abs{h}}_{L^q}^2\leq s(n)\abs{\abs{\weyl}}_{L^p}\pa{t\lambda^{\frac{1}{t}}\abs{\abs{h}}_{L^2}^2+(1-t)\lambda^{\frac{1}{t-1}}\abs{\abs{h}}^2_{L^{\frac{2n}{n-2}}}},
		\end{align}
		for $t=1-\frac{n}{2p}$ and $\lambda\in \erre\setminus\set{0}$. Take
		\[\lambda=\sq{\frac{4(n-1)}{n(n-2)}+1}^{-t(t-1)}\mathrm{Vol}(M)^{-\frac{t}{p}}.\]
		Plugging \eqref{interpolation Y} into \eqref{stab1} and using
		Sobolev inequality, \eqref{sob cpt}, we have
		\begin{align}\label{stab 2}
			(\Delta_E h,h)\geq&\alpha\abs{\abs{\nabla h}}_{L^2}^2+\pa{n+1-\alpha n}\frac{2S}{n(n-1)}\abs{\abs{h}}_{L^2}^2\\
			&-2(2-\alpha)s(n)\abs{\abs{\weyl}}_{L^p}\pa{t\lambda^{\frac{1}{t}}\abs{\abs{h}}_{L^2}^2+(1-t)\lambda^{\frac{1}{t-1}}\abs{\abs{h}}^2_{L^{\frac{2n}{n-2}}}}\notag\\
			\geq&\pa{\alpha-2(2-\alpha)s(n)(1-t)\lambda^{\frac{1}{t-1}}\frac{4(n-1)}{n-2}\frac{\abs{\abs{\weyl}}_{L^{p}}}{S}\mathrm{Vol}(M)^{-\frac{2}{n}}}\abs{\abs{\nabla h}}_{L^2}^2\notag\\
			&+\pa{(n+1-n\alpha)\frac{2S}{n(n-1)}-2(2-\alpha)s(n)\sigma(t)\abs{\abs{\weyl}}_{L^p}}\abs{\abs{h}}_{L^2}^2.\notag
		\end{align}
		where, for the sake of simplicity we denote
		\begin{equation*}
			\sigma(t)=t\lambda^{\frac{1}{t}}+(1-t)\lambda^{\frac{1}{t-1}}\mathrm{Vol}(M)^{-\frac{2}{n}}.
		\end{equation*}
		Arguing as in \cite{Kroncke15}, by \eqref{bochner e oper D1} we deduce
		\begin{align*}
			\abs{\abs{\nabla h}}_{L^2}^2&=\abs{\abs{D_1h}}_{^2}^2+2\frac{S}{n}\abs{\abs{h}}_{L^2}^2-2(\mathring{R}h,h)_{L^2}\\
			&\geq\frac{2}{n-1}S\abs{\abs{h}}_{L^2}^2-2(\mathring{W}h,h)_{L^2}\\
			&\geq\frac{2}{n-1}S\abs{\abs{h}}_{L^2}^2-2\abs{\abs{\weyl}}_{L^p}\abs{\abs{h}}_{L^q}^2,
		\end{align*}
		and using interpolation and Young inequality, we deduce
		\begin{align*}
			\abs{\abs{\nabla h}}_{L^2}^2&\geq\frac{2}{n-1}S\abs{\abs{h}}_{L^2}^2-2s(n)\abs{\abs{\weyl}}_{L^p}\pa{t\lambda^{\frac{1}{t}}\abs{\abs{h}}_{L^2}^2+(1-t)\lambda^{\frac{1}{t-1}}\abs{\abs{h}}^2_{L^{\frac{2n}{n-2}}}}.		
		\end{align*}
		By Sobolev inequality it follows
		\begin{align*}
			\abs{\abs{\nabla h}}_{L^2}^2
			\geq&\sq{\frac{2}{n-1}S-2s(n)\sigma(t)\abs{\abs{\weyl}}_{L^p}}\abs{\abs{h}}_{L^2}^2\\
			&-s(n)(1-t)\lambda^{\frac{1}{t-1}}\frac{8(n-1)}{n-2}\frac{\abs{\abs{\weyl}}_{L^p}}{S}\mathrm{Vol}(M)^{-\frac{2}{n}}\abs{\abs{\nabla h}}_{L^2}^2.
		\end{align*}
		Therefore, $\abs{\abs{\nabla h}}_{L^2}^2$ can be estimated by
		\begin{align}\label{nabla h}
			\abs{\abs{\nabla h}}_{L^2}^2\geq&\pa{1+s(n)(1-t)\lambda^{\frac{1}{t-1}}\frac{8(n-1)}{n-2}\frac{\abs{\abs{\weyl}}_{L^p}}{S}\mathrm{Vol}(M)^{-\frac{2}{n}}}^{-1}\cdot\\
			&\cdot\sq{\frac{2}{n-1}S-2s(n)\sigma(t)\abs{\abs{\weyl}}_{L^p}}\abs{\abs{h}}_{L^2}^2.\notag
		\end{align}
		Combining \eqref{stab 2} and \eqref{nabla h}, we deduce
		\begin{align*}
			(\Delta_E h,h)\geq&\Bigg\{\pa{\alpha-(2-\alpha)s(n)(1-t)\lambda^{\frac{1}{t-1}}\frac{8(n-1)}{n-2}\frac{\abs{\abs{\weyl}}_{L^{p}}}{S}\mathrm{Vol}(M)^{-\frac{2}{n}}}\cdot\\
			&\cdot\pa{1+s(n)(1-t)\lambda^{\frac{1}{t-1}}\frac{8(n-1)}{n-2}\frac{\abs{\abs{\weyl}}_{L^p}}{S}\mathrm{Vol}(M)^{-\frac{2}{n}}}^{-1}\cdot\\
			&\cdot\sq{\frac{2}{n-1}S-2s(n)\sigma(t)\abs{\abs{\weyl}}_{L^p}}\\
			&+\pa{(n+1-n\alpha)\frac{2S}{n(n-1)}-2(2-\alpha)s(n)\sigma(t)\abs{\abs{\weyl}}_{L^p}}\Bigg\}\abs{\abs{h}}_{L^2}^2.
		\end{align*}
		Therefore, $(M,g)$ is stable (strictly stable) if the left-hand side of this inequality is non-negative (positive). 
		It is easy to check that this holds if 
		\begin{align*}
			\set{\frac{4(n-1)}{n(n-2)}(1-t)\lambda^{\frac{1}{t-1}}\mathrm{Vol}(M)^{-\frac{2}{n}}+\sigma(t)}s(n)\abs{\abs{\weyl}}_{L^p}\leq\frac{S}{2}\frac{n+1}{n(n-1)},
		\end{align*}
		which is equivalent to  
		\begin{align*}
			\set{\pa{\frac{4(n-1)}{n(n-2)}+1}(1-t)\lambda^{\frac{1}{t-1}}\mathrm{Vol}(M)^{-\frac{2}{n}}+t\lambda^{\frac{1}{t}}}s(n)\abs{\abs{\weyl}}_{L^p}\leq\frac{S}{2}\frac{n+1}{n(n-1)}.
		\end{align*}
		due to the definition of $\sigma(t)$. Moreover, substituting the expression of $t$ and $\lambda$ we obtain
		\[	\abs{\abs{\weyl}}_{L^p}\leq\frac{S}{2}\frac{n+1}{n(n-1)}\pa{\frac{4(n-1)}{n(n-2)}+1}^{-\frac{n}{2p}}s(n)^{-1}\mathrm{Vol}(M)^{\frac{1}{p}}.\]
	\end{proof}
	Adapting the argument of Theorem \ref{lpWcpt} to K\"{a}hler-Einstein manifold we obtain:
	\begin{proof}[Proof of Theorem \ref{lpBcpt}]
		On a K\"{a}hler-Einstein manifold, the Riemann curvature tensor can be written as 
		\[R_{ijkl}=B_{ijkl}+\frac{S}{n(n+2)}\pa{\delta_{ik}\delta_{jl}-\delta_{il}\delta_{jk}+J^k_iJ^l_j-J^l_iJ^k_j+2J^j_iJ^l_k},\]
		with respect to a local orthonormal frame.
		As a consequence, a straightforward computation combined with the Cauchy-Schwarz inequality, implies
		\begin{align}\label{riemboc}
			R_{ijkl}h_{ik}h_{jl}\leq B_{ijkl}h_{ik}h_{jl}+2\frac{S}{n(n+2)}\abs{h}^2.
		\end{align}
		Consider $\alpha\in[0,1]$, for every $TT$ tensor $h$ with compact support, we combine
		\begin{align*}
			(\Delta_E h,h)_{L^2}&=||D_1h||^2_{L^2}+2\frac{S}{n}\abs{\abs{h}}_{L^2}^2-4(\mathring{R}h,h)_{L^2}\\
			&\geq\frac{2(n-2)}{n(n+2)}S\abs{\abs{h}}_{L^2}^2-4(\mathring{B}h,h)_{L^2}
		\end{align*}
		with
		\begin{align*}
			(\Delta_E h,h)_{L^2}&=\abs{\abs{\nabla h}}_{L^2}^2-2(\mathring{R}h,h)_{L^2}\\
			&\geq\abs{\abs{\nabla h}}_{L^2}^2-\frac{4S}{n(n+2)}\abs{\abs{h}}_{L^2}^2-2(\mathring{B}h,h)_{L^2}
		\end{align*}
		to obtain
		\begin{align}\label{stabB1}
			(\Delta_E h,h)_{L^2}&\geq\alpha\abs{\abs{\nabla h}}_{L^2}^2+\frac{2S}{n(n+2)}(n-2-n\alpha)\abs{\abs{h}}_{L^2}^2-2(2-\alpha)(\mathring{B}h,h)_{L^2}.
		\end{align}
		Note that, since $\boc$ can be regarded as a symmetric linear operator (due to its symmetries) on symmetric $(0,2)$-tensors, we have the validity of the following pointwise inequality: let $\eta$ be the largest eigenvalue of $\boc$, then
		\[B_{ijkl}h_{ik}h_{jl}\leq \eta\abs{h}^2\leq s(n)\abs{\boc}\abs{h}^2.\]
		Arguing as in Theorem \ref{lpWcpt} and using the same notation, we deduce
		\begin{align}\label{estimB}
			(\mathring{B}h,h)_{L^2}\leq s(n)\abs{\abs{\boc}}_{L^p}\Bigg[\sigma(t)\abs{\abs{h}}_{L^2}^2+(1-t)\lambda^{\frac{1}{t-1}}\frac{4(n-1)}{n-2}\frac{\mathrm{Vol}(M)^{-\frac{2}{n}}}{S}\abs{\abs{\nabla h}}_{L^2}^2\Bigg],
		\end{align}
		where $t=1-\frac{n}{2p}$,
		\[\sigma(t)=t\lambda^{\frac{1}{t}}+(1-t)\lambda^{\frac{1}{t-1}}\mathrm{Vol}(M)^{-\frac{2}{n}}\]
		and we take
		\[\lambda=\sq{\frac{4(n-1)}{n(n-2)}+1}^{-t(t-1)}\mathrm{Vol}(M)^{-\frac{t}{p}}.\]
		Adapting the same argument of Theorem \ref{lpWcpt}, we deduce that $\abs{\abs{\nabla h}}_{L^2}^2$ can be estimated by
	\begin{align}\label{estim nab h b}
		\abs{\abs{\nabla 	h}}^2\geq&\pa{1+(1-t)\lambda^{\frac{1}{t-1}}\frac{8(n-1)}{n-2}s(n)\frac{\mathrm{Vol(M)^{-\frac{2}{n}}}}{S}\abs{\abs{\boc}}_{L^p}}^{-1}\sq{2\frac{S}{n+2}-2s(n)\sigma(t)\abs{\abs{\boc}}_{L^p}}\abs{\abs{h}}_{L^2}^2.
	\end{align}
	Thus, substituting \eqref{estimB} and \eqref{estim nab h b} into \eqref{stabB1} we get
	\begin{align*}
		(\Delta_E h,h)_{L^2}\geq&\Bigg\{\pa{1+(1-t)\lambda^{\frac{1}{t-1}}\frac{8(n-1)}{n-2}s(n)\frac{\mathrm{Vol(M)^{-\frac{2}{n}}}}{S}\abs{\abs{\boc}}_{L^p}}^{-1}\sq{2\frac{S}{n+2}-2s(n)\sigma(t)\abs{\abs{\boc}}_{L^p}}\cdot\\
		&\cdot\sq{\alpha-(2-\alpha)(1-t)\lambda^{\frac{1}{t-1}}\frac{8(n-1)}{n-2}s(n)\frac{\mathrm{Vol}(M)^{-\frac{2}{n}}}{S}\abs{\abs{\boc}}_{L^p}}\\
		&+\sq{\frac{2S}{n(n+2)}(n-2-n\alpha)-2(2-\alpha)s(n)\sigma(t)\abs{\abs{\boc}}_{L^p}}\Bigg\}\abs{\abs{h}}_{L^2}^2.
	\end{align*}
	Therefore, $(M,g)$ is stable (strictly stable) if the left-hand side of this inequality is non-negative (positive).
	It is easy to check that this holds if 
	\begin{align*}
		\set{\frac{4(n-1)}{n(n-2)}(1-t)\lambda^{\frac{1}{t-1}}\mathrm{Vol}(M)^{-\frac{2}{n}}+\sigma(t)}s(n)\abs{\abs{\boc}}_{L^p}\leq\frac{S}{2}\frac{n-2}{n(n+2)}
	\end{align*}
	which is equivalent to 
	\begin{align*}
		\set{\pa{\frac{4(n-1)}{n(n-2)}+1}(1-t)\lambda^{\frac{1}{t-1}}\mathrm{Vol}(M)^{-\frac{2}{n}}+t\lambda^{\frac{1}{t}}}s(n)\abs{\abs{\boc}}_{L^p}\leq\frac{S}{2}\frac{n-2}{n(n+2)}
	\end{align*}
	due to the definition of $\sigma(t)$. Moreover, substituting the expression of $t$ and $\lambda$ we obtain
	\[\abs{\abs{\boc}}_{L^p}\leq\frac{S}{2}\frac{n-2}{n(n+2)}\pa{\frac{4(n-1)}{n(n-2)}+1}^{-\frac{n}{2p}}s(n)^{-1}\mathrm{Vol}(M)^{\frac{1}{p}}.\]		
	\end{proof}
	On the other hand, there are isolation theorems for the for the Weyl and Bochner tensor. In particular, when $n=4$, Gursky and LeBrun proved
	\begin{theorem}[Theorem 1 of \cite{GurskyLeBrun}]
		Let $(M,g)$ be a compact oriented Einstein manifold of dimension $4$ with $S>0$ and $\weyl^+\not\equiv 0$. Then
		\[\int_M\abs{\weyl^+}^2dV_g\geq\frac{1}{6}\int_MS^2dV_g\]
		with equality if and only if $\nabla \weyl^+\equiv 0$.
	\end{theorem}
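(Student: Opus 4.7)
The plan is to derive a subsolution-type inequality for a suitable power of $\abs{\weyl^+}$ with respect to the conformal Laplacian of $g$, and then to exploit the fact that an Einstein metric with positive scalar curvature realizes the Yamabe minimum on its conformal class (by Obata's theorem). The key algebraic facts I will lean on are the pointwise estimate $Q^+\leq\frac{\sqrt{6}}{4}\abs{\weyl^+}^3$ (the $n=4$ case of Huisken's bound appearing in the excerpt) and the refined Kato inequality $\abs{\nabla\weyl^+}^2\geq\tfrac{5}{3}\abs{\nabla\abs{\weyl^+}}^2$ for $\weyl^+$ on an Einstein $4$-manifold.

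First I would apply the Bochner--Weitzenb\"ock identity, which on an Einstein $4$-manifold splits according to the decomposition $\weyl=\weyl^++\weyl^-$ into
\[
\frac{1}{2}\Delta\abs{\weyl^+}^2 = \abs{\nabla\weyl^+}^2 + \frac{S}{2}\abs{\weyl^+}^2 - 2Q^+,
\]
plug in the Huisken and refined Kato bounds, and set $f=\abs{\weyl^+}^{1/3}$. Expanding $\Delta f^3 = 3f^2\Delta f + 6f\abs{\nabla f}^2$, the gradient terms cancel precisely because the Kato constant is $5/3$, and at points where $\abs{\weyl^+}>0$ one obtains the conformal-Laplacian-type differential inequality
\[
L_g f := -6\Delta f + Sf \;\leq\; \sqrt{6}\,f^4.
\]

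Next I pair with $f$ and integrate by parts to get
\[
\int_M \pa{6\abs{\nabla f}^2 + Sf^2}\,dV_g \;\leq\; \sqrt{6}\int_M f^5\,dV_g.
\]
By definition of $Y(M,[g])$ the left-hand side dominates $Y(M,[g])\,\norm{f}_{L^{4}}^{2}$, and by Obata's theorem the Einstein metric $g$ with $S>0$ is a Yamabe minimizer in its conformal class, so $Y(M,[g])=S\sqrt{\mathrm{Vol}(M)}$. The right-hand side is controlled by Cauchy--Schwarz:
\[
\int_M f^5\,dV_g \;\leq\; \pa{\int_M f^4\,dV_g}^{\!1/2}\!\pa{\int_M f^6\,dV_g}^{\!1/2} = \norm{f}_{L^{4}}^{2}\pa{\int_M \abs{\weyl^+}^2\,dV_g}^{\!1/2}.
\]
The hypothesis $\weyl^+\not\equiv 0$ ensures $\norm{f}_{L^4}>0$, so dividing yields $S^2\,\mathrm{Vol}(M)\leq 6\int_M\abs{\weyl^+}^2\,dV_g$, which is the desired inequality.

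The main technical obstacle is that $f=\abs{\weyl^+}^{1/3}$ is only continuous on the zero set of $\weyl^+$, so the pointwise PDE argument has to be justified by regularization: I would work with $f_\eps=(\abs{\weyl^+}^2+\eps)^{1/6}$, repeat the computation collecting $O(\eps)$ error terms, and pass to the limit $\eps\to 0$. For the equality case, equality in Cauchy--Schwarz forces $f$, and hence $\abs{\weyl^+}$, to be a.e.\ constant; with $\nabla\abs{\weyl^+}\equiv 0$, the equality case of the refined Kato inequality collapses to $\nabla\weyl^+\equiv 0$. Conversely, a parallel $\weyl^+$ together with the value $\abs{\weyl^+}\equiv S/\sqrt{6}$ (which is forced by the saturated Weitzenb\"ock identity combined with the equality case of Huisken's bound) saturates every inequality in the chain.
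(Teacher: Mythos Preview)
The paper does not supply its own proof of this statement; it is quoted verbatim as a result of Gursky--LeBrun and used only as background, so there is no in-paper argument to compare against directly.

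Your forward argument is sound and close in spirit to the technique the paper deploys for its Bochner-tensor analogue (Theorem~\ref{thm bochner intro}): both hinge on a Weitzenb\"{o}ck formula, the refined Kato inequality, and the sharp cubic estimate, fed into a Yamabe-type inequality. The paper's route, following \cite{BCDM,GurskyLeBrun}, packages the curvature term into a \emph{modified} conformal invariant $\ol{Y}^t$ and argues by concavity in an auxiliary parameter; you instead produce the subsolution $L_g f\le\sqrt{6}\,f^4$ for $f=\abs{\weyl^+}^{1/3}$ directly and finish with Obata plus Cauchy--Schwarz. Both reach the same bound.

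There is, however, a real gap in your converse. From $\nabla\weyl^+\equiv 0$ the Bochner identity yields $Q^+=\tfrac{S}{4}\abs{\weyl^+}^2$, and combining this with the Huisken \emph{inequality} $Q^+\le\tfrac{\sqrt{6}}{4}\abs{\weyl^+}^3$ gives only $\abs{\weyl^+}\ge S/\sqrt{6}$, i.e.\ the forward bound again, not equality. To conclude $\abs{\weyl^+}=S/\sqrt{6}$ you must know that Huisken's estimate is \emph{saturated}, equivalently that $\weyl^+$ (as a trace-free endomorphism of $\Lambda^+$) has a repeated eigenvalue. That is true for parallel $\weyl^+$ on an Einstein $4$-manifold, but it is a separate algebraic consequence of the second Bianchi identity (via the pointwise matrix Weitzenb\"{o}ck for $\weyl^+$, or the classification of such metrics), and nothing you wrote establishes it. A smaller point: equality in your Cauchy--Schwarz step only forces $f\in\{0,c\}$ a.e.; to get $f$ genuinely constant you should invoke the simultaneous equality in the Yamabe inequality together with Obata's theorem, noting that $\weyl^+\not\equiv 0$ rules out the round sphere.
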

	More in general, for $n\geq4$ we have
	\begin{cor}[Corollary 1.2 of \cite{BCDM}]
		Let $(M,g)$ be a compact (conformally) Einstein manifold of dimension
		$n\geq 4$ with positive Yamabe invariant. Then, either
		$(M,g)$ is locally conformally flat or
		\begin{equation} \label{hebeyimprove}
			Y(M,[g])\leq A(n)\pa{\int_M\abs{\weyl}^{\frac{n}{2}}
				d\mu_g}^{\frac{2}{n}},
		\end{equation}
		where $A(4)=\sqrt{6}$, $A(5)=\frac{64}{3\sqrt{10}}$,
		$A(6)=\sqrt{210}$ and
		$A(n)=\frac{5}{2}n$ for
		$n\geq 7$;
		if $n=5$, \eqref{hebeyimprove} is a strict inequality.
	\end{cor}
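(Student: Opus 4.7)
The plan is to mimic the BCDM strategy used for the Weyl tensor on closed Einstein manifolds, with the Weyl tensor and its Bochner-Weitzenb\"{o}ck formula replaced everywhere by the Bochner tensor $\operatorname{B}$ and the Bochner-type formula \eqref{bw for b}. The statement has two alternatives, so first dispose of the case $\operatorname{B}\equiv 0$: on a K\"{a}hler manifold, vanishing Bochner tensor together with the Einstein condition forces constant holomorphic sectional curvature, and by the Hawley--Igusa rigidity result (combined with positivity of $Y(M,[g])$, which forces positive scalar curvature) the manifold is biholomorphically homothetic to $\mathbb{CP}^m$ with the Fubini--Study metric. From now on assume $\operatorname{B}\not\equiv 0$, and the goal is to prove the integral inequality.

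The starting point is the Bochner-Weitzenb\"{o}ck identity \eqref{bw for b}, written in the Einstein normalization $2R/m=2S/n$ as
\begin{equation*}
\tfrac{1}{2}\Delta|\operatorname{B}|^{2}=|\nabla \operatorname{B}|^{2}+\tfrac{2S}{n}|\operatorname{B}|^{2}-2\mathcal{B}.
\end{equation*}
Fix a suitable exponent $q\ge 2$, multiply this identity by $(|\operatorname{B}|^{2}+\varepsilon)^{q/2-1}$, integrate over $M$, integrate by parts, and take $\varepsilon\to 0$ to obtain (after standard justification at the zero set of $\operatorname{B}$)
\begin{equation*}
\int_{M}|\operatorname{B}|^{q-2}|\nabla \operatorname{B}|^{2}dV_{g}+(q-2)\int_{M}|\operatorname{B}|^{q-2}|\nabla |\operatorname{B}||^{2}dV_{g}+\tfrac{2S}{n}\int_{M}|\operatorname{B}|^{q}dV_{g}=2\int_{M}|\operatorname{B}|^{q-2}\mathcal{B}\,dV_{g}.
\end{equation*}
Next, apply the refined Kato inequality $|\nabla|\operatorname{B}||^{2}\le\kappa_{n}^{2}|\nabla \operatorname{B}|^{2}$ valid for the Bochner tensor on a K\"{a}hler-Einstein manifold (the Hermitian analogue of the Kato bound used for the Weyl tensor in the proof of Theorem \ref{isolation thm intro}) to bound the left-hand gradient terms from below by a multiple of $\int_{M}|\nabla|\operatorname{B}|^{q/2}|^{2}dV_{g}$.

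On the right-hand side, use the pointwise factorization $|\operatorname{B}|^{q-2}|\mathcal{B}|=(|\mathcal{B}|^{n/2}|\operatorname{B}|^{-n})^{2/n}\,|\operatorname{B}|^{q}$ and H\"{o}lder's inequality with exponents $n/2$ and $n/(n-2)$ to obtain
\begin{equation*}
\int_{M}|\operatorname{B}|^{q-2}|\mathcal{B}|\,dV_{g}\le\pa{\int_{M}|\mathcal{B}|^{n/2}|\operatorname{B}|^{-n}dV_{g}}^{2/n}\pa{\int_{M}|\operatorname{B}|^{qn/(n-2)}dV_{g}}^{(n-2)/n}.
\end{equation*}
The second H\"{o}lder factor equals $\||\operatorname{B}|^{q/2}\|_{L^{2n/(n-2)}}^{2}$, which is controlled via the Yamabe/Sobolev inequality
\begin{equation*}
Y(M,[g])\,\||\operatorname{B}|^{q/2}\|_{L^{2n/(n-2)}}^{2}\le\tfrac{4(n-1)}{n-2}\int_{M}|\nabla|\operatorname{B}|^{q/2}|^{2}dV_{g}+S\int_{M}|\operatorname{B}|^{q}dV_{g}.
\end{equation*}
Substituting back into the integrated Bochner identity yields an inequality of the schematic form $A\,G+B\,P\le 2\,\mathcal{I}\,(\tilde{A}\,G+\tilde{B}\,P)/Y(M,[g])$, where $G=\int|\nabla|\operatorname{B}|^{q/2}|^{2}$, $P=\int|\operatorname{B}|^{q}$, and $\mathcal{I}$ is the integral on the right of \eqref{ineq yamabe and bochner}.

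Now one chooses $q$ so that $A/\tilde{A}=B/\tilde{B}$; this is a single algebraic equation in $q$ (depending on $\kappa_{n}$ and $n$), solvable in the admissible range $q\ge 2$. With this optimal choice the gradient and potential terms combine, the $G$ and $P$ contributions cancel on both sides, and the resulting scalar inequality is exactly $Y(M,[g])\le n\,\mathcal{I}^{2/n}$ — the constant $n$ being forced by the ratio $2\tilde{A}/A$ at the matched $q$. For the equality statement, tracing the chain backwards forces equality in (i) the refined Kato inequality, (ii) H\"{o}lder (so $|\mathcal{B}|=c|\operatorname{B}|^{3}$ pointwise), and (iii) the Sobolev inequality applied to $|\operatorname{B}|^{q/2}$. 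Combined with Obata-type rigidity for a Yamabe minimizer on an Einstein background, (iii) forces $|\operatorname{B}|$ to be constant, hence $\nabla \operatorname{B}\equiv 0$; a parallel non-trivial Bochner tensor on a K\"{a}hler-Einstein manifold then characterizes local symmetry.

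The main obstacle is the refined Kato inequality for the Bochner tensor on K\"{a}hler-Einstein backgrounds: the sharp constant $n$ in the stated inequality is precisely what the matched $q$ produces, and this in turn depends on having the sharp Kato bound $\kappa_{n}$ in hand. A secondary technical point is the $\varepsilon$-regularization and passage to the limit at the zero set of $\operatorname{B}$, together with verifying that the admissible exponent $q$ makes all intermediate integrals finite; both are standard but must be carried out carefully to preserve the sharpness.
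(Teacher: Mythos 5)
You have proved (a sketch of) the wrong statement. The corollary under review concerns the Weyl tensor of a general closed (conformally) Einstein manifold: the dichotomy is ``locally conformally flat'' versus $Y(M,[g])\le A(n)\pa{\int_M\abs{\weyl}^{n/2}d\mu_g}^{2/n}$ with the explicit constants $A(4)=\sqrt6$, $A(5)=\tfrac{64}{3\sqrt{10}}$, $A(6)=\sqrt{210}$, $A(n)=\tfrac52 n$. Your proposal replaces $\weyl$ by the Bochner tensor throughout, assumes a K\"ahler--Einstein structure, disposes of the degenerate case via $\mathbb{CP}^m$ and Hawley--Igusa rather than local conformal flatness, and aims at $Y(M,[g])\le n\pa{\int_M\abs{\mathcal{B}}^{n/2}\abs{\boc}^{-n}dV_g}^{2/n}$ --- that is Theorem \ref{thm bochner intro} of this paper, not the quoted corollary. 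Moreover, the corollary's right-hand side is the Weyl functional itself, which requires the additional pointwise estimate $Q\le C(n)\abs{\weyl}^3$ of \eqref{estimate on Q} to convert the cubic-term integral into $\norm{\weyl}_{L^{n/2}}$; this conversion, which is precisely what produces the numbers $\sqrt6$, $\sqrt{210}$ and $\tfrac52 n$, never appears in your argument.

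Even read as a template for the intended Weyl-tensor proof, two steps fail as stated. First, with the refined Kato constant $\kappa_n^{2}=\tfrac{n-1}{n+1}$, your coefficient-matching condition becomes $\tfrac{2(n-1)}{n(n-2)}q^2-q+\tfrac{n-3}{n-1}=0$, whose discriminant is $\tfrac{(n-4)(n-6)}{n(n-2)}$: there is never a root with $q\ge 2$ (for $n=4$ the unique root is $q=2/3$, for $n=6$ it is $q=6/5$), so the singular weight $\abs{\weyl}^{q-2}$ on the zero set of $\weyl$ cannot be avoided; and for $n=5$ there is no real root at all, which is exactly why the corollary treats $n=5$ separately with a strict inequality and a constant coming from Tran's bound --- a case your uniform scheme cannot reach. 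Second, your route (integrate the Weitzenb\"ock identity against a power of the curvature and apply the Sobolev/Yamabe inequality directly) is the Hebey--Vaugon/Singer scheme, whereas the cited source, like the paper's own proof of the Bochner analogue, runs the Gursky--LeBrun/Bour--Carron argument: a modified Yamabe invariant $\ol{Y}_g(\beta)$, the test function $(\abs{\cdot}^2+\eps^2)^q$ with a small exponent, concavity in $\beta$, and Obata's theorem identifying the minimizing conformal metric with the Einstein one. That last ingredient is what legitimately yields the sharp constant and the rigidity conclusion; your assertion that equality in the Sobolev step forces $\abs{\boc}$ to be constant is not justified without it.
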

	Note that when $n\geq7$, $A(n)$ can be slightly improved (using \cite[Corollary 2.5]{Huisken}), obtaining
	\[A(n)=n\pa{2+\frac{1}{2}\frac{n(n-1)-4}{\sqrt{n(n-1)(n+1)(n-2)}}}.\]
	This results, combined with Corollary \ref{c-boc} (see Section \ref{sec Boc} for more details), rule out the stability criterion for the $L^{\frac{n}{2}}$-norm of Theorem \ref{lpWcpt} when $n=4,5,6,7$ and $n=4,6$, respectively.
	\section{Isolation results for the Bochner and contact Bochner tensors}\label{sec Boc}
	In this section we provide pinching results for the Bochner tensor of a K\"{a}hler-Einstein manifold in general dimensions $n=2m$. The key tool we are going to use is a Bockner-Weitzenb\"{o}ck formula for the Bochner tensor of a K\"{a}hler-Einstein metric together with a suitable modification of the Yamabe invariant. The technique used is the same one of \cite{BCDM}, coupled with the definition of the Bochner tensor. Finally, we will exploit an isolation result for the contact Bochner tensor of a Sasaki-Einstein manifold. Our result provides an improvement of the constant which was found by Itoh and Kobayashi in \cite{ItohKobayashi} and it is direct consequence of \cite[Theorem B]{ItohKobayashi} combined with the constants obtained in \cite{BCDM} for the Weyl tensor, $\weyl$.
	\subsection{K\"{a}hler-Einstein manifolds}
	We recall that the Bochner curvature tensor of a K\"{a}hler manifold is the $(0,4)$-tensor		
	whose components with respect to complex local coordinates are
	\begin{align*}
		B_{i\ol{j}k\ol{l}}=&R_{i\ol{j}k\ol{l}}-\frac{1}{m+2}\pa{R_{i\ol{j}}g_{k\ol{l}}+R_{k\ol{j}}g_{i\ol{l}}+R_{k\ol{l}}g_{i\ol{j}}+R_{i\ol{l}}g_{k\ol{j}}}+\frac{R}{(m+1)(m+2)}\pa{g_{i\ol{j}}g_{k\ol{l}}+g_{k\ol{j}}g_{i\ol{l}}}.
	\end{align*}
	In particular, when $(M,g)$ is K\"{a}hler-Einstein, it encodes all the curvature information, indeed
	\[
	R_{i\ol{j}k\ol{l}}=B_{i\ol{j}k\ol{l}}+\frac{R}{m(m+1)}\pa{g_{i\ol{j}}g_{k\ol{l}}+g_{k\ol{j}}g_{i\ol{l}}}
	\]
	and it satisfies the Bochner-Weitzenb\"{o}ck formula \eqref{bw for b}.
		\begin{rem}
		Although the Bochner tensor was introduced as an Hermitian analogue of the Weyl tensor, in general (when it is considered on an Hermitian manifold), it is not invariant under conformal transformations of the metric. However, in \cite{TricerriVanhecke}, Tricerri and Vanhecke studied the decomposition of the space of curvature tensors over an
		Hermitian vector space under the unitary group's action and introduced a generalization of the Bochner tensor, which they proved to be conformal invariant. This generalized tensor coincides with the standard Bochner tensor on K\"{a}hler manifolds. Moreover, in the four-dimensional case, the Bochner tensor of a K\"{a}hler metric coincides with the self-dual part of the Weyl tensor (\cite{Itoh, TricerriVanhecke}). 
	\end{rem}
	As a consequence, we have that on a K\"{a}hler manifold, the $L^{\frac{n}{2}}$-norm of $\operatorname{B}$ is invariant under conformal change of the metric, in fact let $\tilde{g}=e^{2f}g$, $f\in C^{\infty}(M)$, be a conformal change of the metric $g$;
	where $\tilde{\boc}$ denotes the (Tricerri-Vanhecke conformally invariant) Bochner relative to $\tilde{g}$, and since
	\[dV_{\tilde{g}}=e^{nf}dV_g,\]
	we deduce that
	\[||\tilde{\operatorname{B}}||_{L^{\frac{n}{2}}(\tilde{g})}=||\operatorname{B}||_{L^{\frac{n}{2}}(g)}.\]
	We point out that being K\"{a}hler is not a conformal invariant property, therefore, the new tensor $\tilde{\boc}$ is the modification of the Bochner tensor of $\tilde{g}$ introduced in \cite{TricerriVanhecke}, which is conformally invariant and it coincides with the Bochner tensor of the K\"{a}hler metric. As a consequence it allows us adapt the technique of Theorem 1.1 of \cite{BCDM}, obtaining a sharp pinching result on a K\"{a}hler-Einstein metric. 
%
%
%
	\begin{proof}[Proof of Theorem \ref{thm bochner intro}]
		Assume that $\abs{\boc}\neq 0$; to find integral estimates for $\Delta \abs{\operatorname{B}}^q$, we exploit the same technique used in \cite{BCDM}, which is similar to those used in \cite{BourCarron} and \cite{Gursky200}. Let us define the following modification of the Yamabe invariant
		\begin{align*}
			\ol{Y}^t(M,[g]):&=\inf_{\tilde{g}\in[g]}\mathrm{Vol}_{\tilde{g}}(M)^{-\frac{n-2}{n}}\int_M\pa{\tilde{S}-t\tilde{\mathcal{B}}\abs{\tilde{\boc}}_{\tilde{g}}^{-2}}dV_{\tilde{g}}\\
			&=\inf_{\tilde{g}\in[g]}\mathrm{Vol}_{\tilde{g}}(M)^{-\frac{n-2}{n}}\int_M\pa{2\tilde{R}-t\tilde{\mathcal{B}}\abs{\tilde{\boc}}_{\tilde{g}}^{-2}}dV_{\tilde{g}},
		\end{align*}
		where $\tilde{S}, \tilde{R},\tilde{\mathcal{B}}$ and $\tilde{\boc}$ are relative to the metric $\tilde{g}$. We remark that, given $g$, the geometric quantity $\mathcal{B}$ is cubic in $\boc$, hence, the term $\mathcal{B}\abs{\boc}^{-2}$ can be set equal to zero when the Bochner tensor is vanishing, implying that $\mathcal{B}\abs{\boc}^{-2}$ is continuous. Moreover, $$\nabla(\mathcal{B}\abs{\boc}^{-2})=(\nabla\mathcal{B})\abs{\boc}^{-2}-2\mathcal{B}\abs{\boc}^{-3}\nabla\abs{\boc}$$ 
		which is bounded by a constant multiple of $\abs{\nabla\boc}$. Hence, since $(M,g)$ is compact, we can conclude that hence $\mathcal{B}\abs{\boc}^{-2}$ is Lipschitz continuous.\\
		From now on, we will denote $\abs{\cdot}$, the norm with respect to the metric $g$ to simplify the notation.
		Let $f_{\eps}=\pa{\abs{\operatorname{B}}^2+\eps^2}^{q}$, for $\eps>0$ and $q=\frac{1}{2}\frac{m-1}{m+1}$, then, by \eqref{bw for b}, we obtain 
		\begin{align}\label{delta f eps}
			\Delta f_{\eps}
			&=4q(q-1)f_\eps^{1-\frac{2}{q}}\abs{\boc}^2\abs{\nabla\abs{\boc}}^2+qf^{1-\frac{1}{q}}_\eps\pa{2\abs{\nabla \boc}^2+\frac{4}{n}S\abs{\boc}^2-4\mathcal{B}}.
		\end{align}
		Since we are on a K\"{a}hler-Einstein manifold we have that
		\[\nabla\boc=\nabla\riem,\]
		thus, we can exploit \cite[Lemma 4.9]{Bando}, which gives us the following refined Kato inequality:
		\begin{align}\label{Kato Boc}
			\abs{\nabla\boc}^2\geq\frac{n+6}{n+2}\abs{\nabla\abs{\boc}}^2=\frac{m+3}{m+1}\abs{\nabla\abs{\boc}}^2.
		\end{align}
		Therefore, using \eqref{Kato Boc} and the fact that $f_\eps\geq\abs{\boc}^{2q}$, we get
		\begin{align*}
			\Delta f_{\eps}\geq 2q\pa{2(q-1)+\frac{m+3}{m+1}}\abs{\boc}^2\abs{\nabla\abs{\boc}}^2f_\eps^{1-\frac{2}{q}}+4qf_\eps^{1-\frac{1}{q}}\pa{\frac{R}{m}\abs{\boc}^2-\mathcal{B}},
		\end{align*}
		which, for our choice of $q$, is equivalent to
		\begin{align*}
			\Delta f_{\eps}\geq2\frac{m-1}{m+1}\pa{\frac{R}{m}\abs{\boc}^2-\mathcal{B}}f_\eps^{1-\frac{1}{q}}.
		\end{align*}
		Arguing as in \cite{BCDM}, we introduce the second order operator 
		\begin{align}\label{L beta}
			\mathcal{L}^\beta&=-\frac{4(n-1)}{n-2}\Delta+\beta S-\beta n\mathcal{B}\abs{\boc}^{-2}\\
			&=-\frac{2(2m-1)}{m-1}\Delta+2\beta R-2\beta m\mathcal{B}\abs{\boc}^{-2}.\notag
		\end{align} 
		and we see that
		\begin{align*}
			f_\eps\mathcal{L}^\beta f_\eps=&-\frac{2(2m-1)}{m-1}f_\eps\Delta f_\eps+2\beta R f_\eps^2-2\beta m\mathcal{B}\abs{\boc}^{-2}f_\eps^2\\
			\leq&-\frac{4(2m-1)}{m+1}\pa{\frac{R}{m}-\mathcal{B}\abs{\boc}^{-2}}\abs{\boc}^2f_\eps^{2-\frac{1}{q}}+2\beta Rf_\eps^2-2\beta m\mathcal{B}\abs{\boc}^{-2}f_\eps^2\\
			=&\pa{2\beta-\frac{4(2m-1)}{m+1}}\frac{R}{m}f_\eps^2+\pa{\frac{4(2m-1)}{m+1}-2\beta m}\mathcal{B}\abs{\boc}^{-2}f_\eps^2\\
			&+\eps^2\frac{4(2m-1)}{m+1}\pa{\frac{R}{m}-\mathcal{B}\abs{\boc}^{-2}}f_\eps^{2-\frac{1}{q}}.
		\end{align*}
		Taking $\beta=\ol{\beta}:=\frac{2(2m-1)}{m(m+1)}$, we have
		\[
		f_\eps\mathcal{L}^\beta f_\eps\leq\eps^2\frac{4(2m-1)}{m+1}\pa{\frac{R}{m}-\mathcal{B}\abs{\boc}^{-2}}f_\eps^{2-\frac{1}{q}}.
		\]
		Then, following \cite{BCDM}, we introduce a modification of the Yamabe invariant due to Bour and Carron (see \cite{BourCarron}) 
		\begin{align}\label{yamabe bc}
			\ol{Y}_g(\beta)&:=\inf_{u\neq0,\,u\in C^\infty_0}\frac{\int_Mu\mathcal{L}^\beta udV_g}{||u||_{L^\frac{2n}{n-2}}^2}\\
			&=\inf_{u\neq0,\,u\in C^\infty_0}\frac{\int_M\pa{\frac{2(2m-1)}{m-1}\abs{\nabla u}^2+2\beta R u^2-2\beta m\mathcal{B}\abs{\boc}^{-2}u^2}dV_g}{||u||_{L^\frac{2m}{m-1}}^2},\notag
		\end{align}
		for all $\beta\geq 0$. Note that, since $2-\frac{1}{q}<0$ and $f_\eps\geq\eps^{2q}$, we have
		\[
		0\leq f_\eps^{2-\frac{1}{q}}\eps^2\leq\eps^{4q}, \ra0\quad\text{ when}\quad\eps\ra0.
		\]
		Therefore, when $\beta=\ol{\beta}$ and $\eps\ra0$, 
		\begin{align*}
			\ol{Y}_g(\ol{\beta})&=
			\frac{\int_M f_\eps \mathcal{L}^{\ol{\beta}}f_\eps\,
				dV_g}{\pa{\int_M f_\eps^{\frac{2m}{m-1}}\,dV_g}^{\frac{m-1}{m}
			}}
			\leq\dfrac{4(2m-1)}{m+1}
			\dfrac{\int_M
				\pa{\frac{R}{m}-\mathcal{B}\abs{{\boc}}^{-2}}f_\eps^{2-\frac{1}{q}}\,
				dV_g}{\pa{\int_M f_\eps^{\frac{2m}{m-1}}\,dV_g}^{\frac{m-1}{m}}}
			\eps^2\ra0,
		\end{align*}
		that implies
		\[\ol{Y}_g(\ol{\beta})\leq0.\]
		Note that $\ol{Y}_g(1)=\ol{Y}^t(M,[g])$ for $t=n$. Moreover, since $M$ is compact $\ol{Y}_g(0)=0$ and, by definition, \eqref{yamabe bc} is the infimum of affine functions of $\beta$, thus it is concave and, for $\beta\in[0,1]$, we have
		\[(1-\beta)\ol{Y}_g(0)+\beta\ol{Y}_g(1)\leq\ol{Y}_g(\beta).\]
		We point out that for $m\geq2$, $\ol{\beta}\leq1$, and, since $\ol{Y}_g(\beta)$ is concave, we deduce
		\[\bar{\beta}\ol{Y}^n(M,[g])\leq\ol{Y}_g(\ol{\beta}),\]
		which implies 
		\[\ol{Y}^n(M,[g])\leq0.\]
		Following the same line of reasoning in \cite{GurskyLeBrun} (i.e., adapting an argument of \cite[Proposition 4.4]{LeeParker}), we know that there exists a unique metric $g'\in[g]$ such that 
		of 
		\[2R'-2m\mathcal{B}'\abs{\boc'}_{g'}^{-2}\]
		is a non-positive constant. In particular $g'=v^{\frac{2}{m-1}}g$, for $0<v\in C^{2,\alpha}(M)$, $\alpha\in(0,1)$. Arguing as in Proposition 3.5 of \cite{Gursky200}, and using the density of smooth metrics in the space of $C^{2,\alpha}$ metrics (with the $C^{2,\alpha}$ norm), there exists a unique $\hat{g}$ smooth such that 
		\begin{align}\label{hat g in}	\mathrm{Vol}_{\hat{g}}(M)^{-\frac{m-1}{m}}\int_M2\hat{R}dV_{\hat{g}}\leq\mathrm{Vol}_{\hat{g}}(M)^{-\frac{m-1}{m}}2m\int_M\hat{\mathcal{B}}\abs{\hat{\boc}}^{-2}_{\hat{g}}dV_{\hat{g}},
		\end{align}	
		where $\hat{R}$, $\hat{\mathcal{B}}$ and $\hat{\boc}$ are relative to the metric $\hat{g}$ .
		Therefore, since $g$ is K\"{a}hler-Einstein it attains the minimum of the usual Yamabe invariant $Y(M,[g])$ and due to the validity of \eqref{hat g in}, we have
		\begin{align*}
			Y(M,[g])&=\mathrm{Vol}_g(M)^{-\frac{m-1}{m}}\int_M2RdV_g\leq\mathrm{Vol}_{\hat{g}}(M)^{-\frac{m-1}{m}}\int_M2\hat{R}dV_{\hat{g}}\\
			&\leq\mathrm{Vol}_{\hat{g}}(M)^{-\frac{m-1}{m}}2m\int_M\hat{\mathcal{B}}\abs{\hat{\boc}}^{-2}_{\hat{g}}dV_{\hat{g}}.
		\end{align*}
		Using H\"{o}lder inequality, we get
		\begin{align*}
			\int_M\hat{\mathcal{B}}\abs{\hat{\boc}}^{-2}_{\hat{g}}dV_{\hat{g}}\leq\mathrm{Vol}(M)^{\frac{m-1}{m}}\pa{\int_M\abs{\hat{\mathcal{B}}}^{m}_{\hat{g}}\abs{\hat{\boc}}_{\hat{g}}^{-2m}dV_{\hat{g}}}^{\frac{1}{m}},
		\end{align*}
		that implies
		\begin{align*}
			Y(M,[g])\leq 2m\pa{\int_M\abs{\hat{\mathcal{B}}}_{\hat{g}}^{m}\abs{\hat{\boc}}^{-2m}_{\hat{g}}dV_{\hat{g}}}^{\frac{1}{m}}.
		\end{align*}
		Since the right-hand side is conformally invariant by definition, we obtain
		\begin{align*}
			Y(M,[g])\leq 2m\pa{\int_M\abs{{\mathcal{B}}}^{m}\abs{\boc}^{-2m}dV_g}^{\frac{1}{m}},
		\end{align*}
		which is \eqref{ineq yamabe and bochner}.\\
		If equality holds, then 
		\begin{align*}
			\mathrm{Vol}_{\hat{g}}(M)^{-\frac{m-1}{m}}\int_M\hat{S}= 2m\pa{\int_M\abs{{\hat{\mathcal{B}}}}^{m}_{\hat{g}}\abs{\hat{\boc}}^{-2m}_{\hat{g}}dV_{\hat{g}}}^{\frac{1}{m}},
		\end{align*}
		all previous inequalities are equalities and by definition $\ol{Y}^n(M,[g])=0$. Furthermore, since
		\[
		Y(M,[g])=\mathrm{Vol}_{\hat{g}}(M)^{-\frac{m-1}{m}}
		\int_M 2\hat{R}\, dV_{\hat{g}},
		\]
		we observe that $\hat{g}$ attains the minimum of the usual Yamabe invariant and,
		therefore,
		it is a solution of the Yamabe problem in $[g]$,
		which implies that $\hat{S}$ is constant.
		Since $g$ is an Einstein
		metric in $[g]$,
		we can exploit a well-known result due to Obata (\cite{obata})
		in order to conclude that $\hat{g}=g$ and, as a consequence,
		$\hat{R}=R$, $\hat{\mathcal{B}}=\mathcal{B}$ and $\hat{\boc}=\boc$.
		This implies that
		\begin{equation} \label{SB}
			R-m\mathcal{B}\abs{\boc}^{-2}=0 \Longra
			\mathcal{B}=\dfrac{R}{m}\abs{\boc}^2;
		\end{equation}
		therefore, integrating the Bochner-Weitzenb\"{o}ck formula, we get
		\[
		0=\int_M\pa{\abs{\nabla\boc}^2+\dfrac{2}{m}R\abs{\boc}^2-2\mathcal{B}\,} dV_g=
		\int_M\abs{\nabla\boc}^2dV_g,
		\]
		which implies that $\abs{\nabla\boc}^2\equiv 0$ on $M$, i.e.
		$(M,g)$ is locally symmetric. The converse is a direct consequence of the Bochner-Weitzenb\"{o}ck formula.
	\end{proof}
	Combining Theorem \ref{thm bochner intro} and \eqref{estimate mathcal B} (for $m\geq3$), we obtain the following isolation result for the Bochner tensor of a K\"{a}hler-Einstein metric, improving the result of Theorem 4.2 in \cite{ChongDongLinRen} for dimension six and recovering it in higher dimensions. In dimension $m=2$ we use a different estimate, which relies on the fact that in this dimension $\boc$ coincides with $\weyl^+$.
	\begin{cor}\label{c-boc}
		Let $(M,g,J)$ be a compact (conformally) K\"{a}hler-Einstein manifold of dimension $n=2m\geq4$ with positive Yamabe invariant. Then either $(M,g,J)$ is biholomorphically homothetic to the complex projective space $\mathbb{CP}^m$endowed with the Fubini-Study metric or, if $\operatorname{B}\neq0$, 
		\begin{align}\label{yamabe and bochner corol}
			Y(M,[g])\leq C(m)||\boc||_{L^{\frac{n}{2}}},
		\end{align}
		where $C(2)=\sqrt{6}$ and
		$$C(m)=\frac{3(m^2-2)}{\sqrt{m^2-1}}$$
		for $m\geq3$.
	\end{cor}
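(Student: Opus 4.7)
The plan is to combine Theorem \ref{thm bochner intro} with the pointwise cubic estimate \eqref{estimate mathcal B}. First, I will invoke Theorem \ref{thm bochner intro}: if $(M,g,J)$ is biholomorphically homothetic to $(\mathbb{CP}^m,g_{FS})$, the conclusion is immediate, so suppose instead that $\boc \not\equiv 0$, whence
\[
Y(M,[g]) \leq n\pa{\int_M |\mathcal{B}|^{n/2}|\boc|^{-n}\,dV_g}^{2/n}.
\]
The bound \eqref{estimate mathcal B} is to be read as a pointwise inequality $|\mathcal{B}| \leq K(m)|\boc|^3$ for an explicit constant $K(m)$ depending only on $m$ (this is natural since $\mathcal{B}$ is cubic in $\boc$, and the inequality is vacuous where $\boc=0$). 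Plugging it into the integrand, three of the $3n/2$ powers of $|\boc|$ coming from $|\mathcal{B}|^{n/2}$ cancel exactly against $|\boc|^{-n}$, so the integral collapses to $\int_M |\boc|^{n/2}\,dV_g$ up to the constant $K(m)^{n/2}$. After extracting the $2/n$-th root, one obtains $Y(M,[g]) \leq n\,K(m)\,\|\boc\|_{L^{n/2}}$, which matches the statement with $C(m)=\frac{3(m^2-2)}{\sqrt{m^2-1}}$ for $m\geq 3$.

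The case $m=2$ (equivalently $n=4$) must be treated separately, because the generic constant does not simplify to the sharp Gursky-LeBrun value there. As noted in the remark preceding Theorem \ref{thm bochner intro}, on a K\"ahler-Einstein $4$-manifold the Bochner tensor coincides with the self-dual part $\weyl^+$ of the Weyl tensor, and moreover its $L^{n/2}$-norm is conformally invariant. Hence Corollary 1.2 of \cite{BCDM} applied to $\weyl^+$, i.e.\ the sharp inequality of Gursky-LeBrun, directly yields the stated constant $C(2)=A(4)=\sqrt{6}$, completing this case.

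The step I expect to require the most care is not analytical but combinatorial: tracking the factor of $n=2m$ from Theorem \ref{thm bochner intro}, the $K(m)$ coming from \eqref{estimate mathcal B}, and the cancellation of $|\boc|$-powers in the integrand so that the final constant in front of $\|\boc\|_{L^{n/2}}$ simplifies precisely to $\frac{3(m^2-2)}{\sqrt{m^2-1}}$ as claimed; there are no further analytical difficulties, since both ingredients (the global integral inequality and the pointwise cubic bound on $\mathcal{B}$) have already been established earlier in the paper.
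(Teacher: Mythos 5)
Your route is exactly the paper's: the paper offers no proof beyond the sentence "Combining Theorem \ref{thm bochner intro} and \eqref{estimate mathcal B}", and your reading of \eqref{estimate mathcal B} as the pointwise bound $\abs{\mathcal{B}}\leq K(m)\abs{\boc}^3$ (the factor $\abs{\boc}^3$ is indeed missing in the printed formula, for dimensional reasons), the cancellation $\abs{\mathcal{B}}^{n/2}\abs{\boc}^{-n}\leq K(m)^{n/2}\abs{\boc}^{n/2}$, and the separate treatment of $m=2$ via $\boc=\weyl^{+}$ and Gursky--LeBrun all match what the authors intend (the paper itself makes the $n=4$ identification in the remark following the corollary). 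The one place where your write-up falls short of a proof is precisely the step you flagged as delicate and then asserted rather than checked: with the constant as printed in \eqref{estimate mathcal B}, namely $K(m)=\frac{3(m^2-2)}{m\sqrt{m^2-1}}$, the combination gives
\begin{align*}
Y(M,[g])\;\leq\; n\,K(m)\,\norm{\boc}_{L^{n/2}}\;=\;2m\cdot\frac{3(m^2-2)}{m\sqrt{m^2-1}}\,\norm{\boc}_{L^{n/2}}\;=\;\frac{6(m^2-2)}{\sqrt{m^2-1}}\,\norm{\boc}_{L^{n/2}},
\end{align*}
which is \emph{twice} the constant $C(m)=\frac{3(m^2-2)}{\sqrt{m^2-1}}$ stated in the corollary. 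So either the cubic estimate actually holds with the halved constant $\frac{3(m^2-2)}{2m\sqrt{m^2-1}}\abs{\boc}^3$ (in which case \eqref{estimate mathcal B} has a second typo besides the missing $\abs{\boc}^3$), or the corollary's constant should be $\frac{6(m^2-2)}{\sqrt{m^2-1}}$; your claim that the constants "simplify precisely" to the stated $C(m)$ is not established by what you wrote. To close the argument you need to derive the sharp admissible $K(m)$ from the definition $\mathcal{B}=4\mathrm{tr}(H^3)-8\mathrm{tr}(K^3)$ together with Huisken's Lemma 2.4 (or the computation in \cite{ChongDongLinRen}), and then record which of the two constants survives; everything else in your proposal is fine.
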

	\noindent
	Note that when $n=4$ the Bochner tensor coincide with the self-dual part of the Weyl tensor, hence the constant $C(2)$ is not obtained by \eqref{estimate mathcal B} but by combining the peculiarities of dimension four (see also equation \eqref{estimate on Q} and \cite{Gursky200,Huisken}) with Theorem \ref{thm bochner intro}. In this way, we recover the optimal pinching result obtained for $\weyl^+$ in dimension $4$ by Gursky and LeBrun in \cite{GurskyLeBrun, Gursky200} and which was recovered by Itoh and Kobayashi in \cite{ItohKobayashi} studying isolation results for the Bochner tensor. On the other hand, $C(m)$ is obtained combining \eqref{estimate mathcal B} with Theorem \ref{thm bochner intro}.
	\subsection{Sasaki-Einstein manifolds}
	\begin{defi}
		We say that $(M,\phi,\xi,\eta,g)$ is a Sasakian manifold of dimension $n=2m+1\geq5$ if $g,\eta,\xi$ and $\phi$ are respectively a Riemannian metric, a $1$-form, a unit Killing vector field (known as Reeb vector field) and a $(1,1)$-tensor field satisfying
		\begin{align*}
			\eta(X)&=g(\xi,X);\\
			\pa{\nabla_X\eta}(Y)&=g(X,\phi Y);\\
			\phi^2X&=-X+\eta(X)\xi;\\
			\pa{\nabla_X\phi}(Y)&=g(X,Y)\xi-\eta(Y)X,
		\end{align*}
	for every $X,Y\in\mathfrak{X}(M)$.\\
	 $(M,\phi,\xi,\eta,g)$ is said to $\eta$-Einstein if the Ricci tensor has the form
	\begin{align*}
		\ric=\lambda g+\mu\eta\otimes\eta,
	\end{align*}
	where
	\begin{align*}
		\lambda=\frac{S}{n-1}-1, \quad\mu=-\frac{S}{n-1}+n,
	\end{align*}
	moreover, when $(M,\phi,\xi,\eta,g)$ is $\eta$-Einstein the scalar curvature is constant.
	\end{defi}
	Similarly to the K\"{a}hlerian case, an analogous of the Weyl tensor has been intriduced also in the Sasakian setting, namely the contact Bochner tensor. 
	\begin{defi}
		Let $(M,\phi,\xi,\eta,g)$ be a Sasaki manifold of dimension $n=2m+1\geq 5$, then the contact Bochner tensor is the $(0,4)$-tensor whose components, with respect to a local orthonormal frame, are
		\begin{align}\label{contact Bochner}
			B_{ijst}=&R_{ijst}-\frac{1}{n+3}[R_{is}\delta_{jt}+R_{jt}\delta_{is}-R_{it}\delta_{js}-R_{js}\delta_{it}\\
			&+R_{ir}\phi^r_s\phi_{jt}+R_{jr}\phi^r_t\phi_{is}-R_{ir}\phi^r_t\phi_{js}-R_{jr}\phi^r_s\phi_{it}+2R_{ir}\phi^r_j\phi_{st}+2\phi_{ij}R_{sr}\phi^r_t\notag\\
			&-R_{is}\eta_j\eta_t-R_{jt}\eta_i\eta_s+R_{it}\eta_j\eta_s+R_{js}\eta_i\eta_t]+\frac{k+n-1}{n+3}[\phi_{is}\phi_{jt}-\phi_{it}\phi_{js}+2\phi_{ij}\phi_{st}]\notag\\
			&\frac{k-4}{n+3}[\delta_{is}\delta_{jt}-\delta_{it}\delta_{js}]-\frac{k}{n+3}[\delta_{is}\eta_j\eta_t+\delta_{jt}\eta_i\eta_s-\delta_{it}\eta_j\eta_s-\delta_{js}\eta_i\eta_t],\notag
		\end{align}
		where $k=\frac{S+n-1}{n+1}$. 
	\end{defi}
	Note that 
	\begin{align*}
		B_{ijst}=&R_{ijst}-\frac{k}{n-1}\sq{\delta_{is}\delta_{jt}-\delta_{it}\delta_{js}}\\
		&-\pa{\frac{k}{n-1}-1}\sq{\phi_{is}\phi_{jt}-\phi_{it}\phi_{js}+2\phi_{ij}\phi_{st}-\delta_{is}\eta_j\eta_t-\delta_{jt}\eta_i\eta_s+\delta_{it}\eta_j\eta_s+\delta_{js}\eta_i\eta_t}
	\end{align*}
	if and only if $(M,\phi,\xi,\eta,g)$ is $\eta$-Einstein.
	Given a Sasakian manifold $(M,\phi,\xi,\eta,g)$, with K\"{a}hler cone metric $\ol{g}=dr^2+r^2g$ we deform $r$ into $r'=r^c$, where $c$ is a positive constant. This deformation transforms the Sasakian structure into a new one defined by
	\begin{align*}
		\phi_c=\phi;\quad\xi_c=c^{-1}\xi;\quad\eta_c=c\eta;\quad g_c=cg+c(c-1)\eta\otimes\eta.
	\end{align*}
	Such a transformation is known as $D$-homothetic deformation. As a consequence the Riemann curvature tensor, the Ricci tensor and the scalar curvature associated to $g_c$ are
	\begin{align*}
		\riem_c&=c\riem+c(c^2-1)(\eta\otimes\eta)\KN g-c(c-1)\Phi;\\
		\ric_c&=\ric-2(c-1)g+(c-1)\sq{(n-1)c+n+1}\eta\otimes\eta;\\
		S_c&=c^{-1}S-c^{-1}(c-1)(n-1),
	\end{align*}
	respectively, where $\Phi_{ijst}=\phi_{is}\phi_{jt}-\phi_{it}\phi_{js}+2\phi_{ij}\phi_{st}.$ Furthermore, the volume form transforms as $dV_{g_c}=c^{\frac{n+1}{2}}dV_{g}$.\\
	It has been proved by Itoh and Kobayashi (see section $3.2$ of \cite{ItohKobayashi}) that a Sasakian $\eta$-Einstein manifold can always be deformed into a Sasaki-Einstein manifold whose contact Bochner tensor coincides with the Weyl tensor of the Einstein metric. As a consequence, we have the validity of Theorem \ref{thm contact Bochner}, which improves the already known result by Itoh and Kobayashi (\cite[Theorem B]{ItohKobayashi}).
	\begin{proof}[Proof of Theorem \ref{thm contact Bochner}]
		Given a Sasaki $\eta$-Einstein manifold of dimension $n=2m+1\geq5$ with scalar curvature $S>-(n-1)$, we can consider the $D$-homothetic transformation $(\phi,\xi,\eta,g)\ra(\phi_{\alpha},\xi_{\alpha},\eta_\alpha,g_\alpha)$, for $$\alpha=\frac{S+n-1}{(n-1)(n+1)},$$
		then, by \cite{ItohKobayashi}, we have that the metric $g_\alpha$ is Einstein with
		\begin{align*}
			\ric_\alpha&=(n-1)g_\alpha;\\
			S_\alpha&=n(n-1);\\
			\operatorname{B}_\alpha&=\weyl_{\alpha};
		\end{align*}
		and
		\begin{align}\label{weyl norm}
			||\weyl_\alpha||_{L^{\frac{n}{2}}(g_{\alpha})}=||\operatorname{B}_\alpha||_{L^{\frac{n}{2}}(g_{\alpha})}=\alpha^{\frac{1}{n}}||\operatorname{B}||_{L^{\frac{n}{2}}(g)}.
		\end{align}
		It was proved in \cite{BCDM} that, given an Einstein manifold of dimension $n\geq5$, then if $n\neq5$ and $\weyl_{\alpha}\neq0$,
		\begin{align}\label{n=5 contact bochner}
			Y(M,[g_{\alpha}])\leq n\pa{\int_M \abs{Q}^{\frac{n}{2}}\abs{\weyl}^{-n}}^{\frac{2}{n}}\leq nC(n)||\weyl_{\alpha}||_{L^\frac{n}{2}(g_{\alpha})},
		\end{align}
		where $\abs{\cdot}$ denotes the norm with respect to the metric $g$ and $C(n)=\pa{2+\frac{1}{2}\frac{n(n-1)-4}{\sqrt{n(n-2)(n-1)(n+1)}}}$.\\
		If $n=5$ and $\weyl_{\alpha}\neq 0$,
		\begin{align}\label{n grater thatn 5}
			Y(M,[g_{\alpha}])< \frac{16}{3}\pa{\int_M \abs{Q}^{\frac{5}{2}}\abs{\weyl}^{-5}}^{\frac{2}{5}}\leq \frac{64}{3\sqrt{10}}||\weyl_{\alpha}||_{L^\frac{5}{2}(g_{\alpha})}.
		\end{align}
		Moreover,
		\[
		Y(M,[g_{\alpha}])=\mathrm{Vol}_{g_\alpha}(M)^{\frac{2}{n}}S_{\alpha}=\mathrm{Vol}_g(M)^{\frac{2}{n}}n(n-1)\alpha^{\frac{n+1}{n}}.
		\]
		Combining \eqref{weyl norm}, \eqref{n=5 contact bochner} and \eqref{n grater thatn 5}, we deduce that when $n=5$
		\begin{align*}
			||\operatorname{B}||_{L^{\frac{5}{2}}(g)}>\mathrm{Vol}_g(M)^{\frac{2}{5}}\frac{15\sqrt{10}}{16}\alpha
		\end{align*}
		and when $n>5$
		\begin{align*}
			||\operatorname{B}||_{L^{\frac{n}{2}}(g)}\geq\mathrm{Vol}_g(M)^{\frac{2}{n}}C(n)^{-1}(n-1)\alpha;
		\end{align*}
		thus for $\alpha=\frac{S+n-1}{(n+1)(n-1)}$, we have the claim.
	\end{proof}

	\bibliographystyle{abbrv}
	\bibliography{bibliography}

\begin{thebibliography}{10}

\bibitem{Aubin}
T.~Aubin.
\newblock {\em Some nonlinear problems in {R}iemannian geometry}.
\newblock Springer Monographs in Mathematics. Springer-Verlag, Berlin, 1998.

\bibitem{Bando}
S.~Bando, A.~Kasue, and H.~Nakajima.
\newblock On a construction of coordinates at infinity on manifolds with fast
  curvature decay and maximal volume growth.
\newblock {\em Invent. Math.}, 97(2):313--349, 1989.

\bibitem{Besse}
A.~L. Besse.
\newblock {\em Einstein manifolds}.
\newblock Classics in Mathematics. Springer-Verlag, Berlin, 2008.
\newblock Reprint of the 1987 edition.

\bibitem{Bochner}
S.~Bochner.
\newblock Curvature and {B}etti numbers. {II}.
\newblock {\em Ann. of Math. (2)}, 50:77--93, 1949.

\bibitem{Bohm}
C.~B\"ohm.
\newblock Unstable {E}instein metrics.
\newblock {\em Math. Z.}, 250(2):279--286, 2005.

\bibitem{BourCarron}
V.~Bour and G.~Carron.
\newblock Optimal integral pinching results.
\newblock {\em Ann. Sci. \'Ec. Norm. Sup\'er. (4)}, 48(1):41--70, 2015.

\bibitem{BCDM}
L.~Branca, G.~Catino, D.~Dameno, and P.~Mastrolia.
\newblock Rigidity of {E}instein manifolds with positive {Y}amabe invariant.
\newblock {\em Ann. Glob. Anal. Geom.}, 67(4):21, 2025.

\bibitem{CaoHe}
H.-D. Cao and C.~He.
\newblock Linear stability of {P}erelman's {$\nu$}-entropy on symmetric spaces
  of compact type.
\newblock {\em J. Reine Angew. Math.}, 709:229--246, 2015.

\bibitem{cartan1}
E.~Cartan.
\newblock Sur une classe remarquable d'espaces de {R}iemann. {I}.
\newblock {\em Bull. Soc. Math. France}, 54:214--216, 1926.

\bibitem{cartan2}
E.~Cartan.
\newblock Sur une classe remarquable d'espaces de {R}iemann. {II}.
\newblock {\em Bull. Soc. Math. France}, 55:114--134, 1927.

\bibitem{CMbook}
G.~Catino and P.~Mastrolia.
\newblock {\em A perspective on canonical {R}iemannian metrics}, volume 336 of
  {\em Progress in Mathematics}.
\newblock Birkh\"auser/Springer, Cham, [2020] \copyright 2020.

\bibitem{CatinoNdiaye}
G.~Catino and C.~B. Ndiaye.
\newblock Integral pinching results for manifolds with boundary.
\newblock {\em Ann. Sc. Norm. Super. Pisa Cl. Sci. (5)}, 9(4):785--813, 2010.

\bibitem{Chern}
S.~Chern.
\newblock On the curvatura integra in a {R}iemannian manifold.
\newblock {\em Ann. of Math. (2)}, 46:674--684, 1945.

\bibitem{ChongDongLinRen}
T.~Chong, Y.~Dong, H.~Lin, and Y.~Ren.
\newblock Rigidity theorems of complete {K}\"ahler-{E}instein manifolds and
  complex space forms.
\newblock {\em Acta Math. Sci. Ser. B (Engl. Ed.)}, 39(2):339--356, 2019.

\bibitem{DaiWangWei}
X.~Dai, X.~Wang, and G.~Wei.
\newblock On the stability of {R}iemannian manifold with parallel spinors.
\newblock {\em Invent. Math.}, 161(1):151--176, 2005.

\bibitem{DaiWangWei2007}
X.~Dai, X.~Wang, and G.~Wei.
\newblock On the variational stability of {K}\"ahler-{E}instein metrics.
\newblock {\em Comm. Anal. Geom.}, 15(4):669--693, 2007.

\bibitem{DaiWei}
X.~Dai and G.~Wei.
\newblock Hitchin-{T}horpe inequality for noncompact {E}instein 4-manifolds.
\newblock {\em Adv. Math.}, 214(2):551--570, 2007.

\bibitem{EguchiHanson}
T.~Eguchi and A.~J. Hanson.
\newblock Self-dual solutions to {E}uclidean gravity.
\newblock {\em Ann. Phys.}, 120(1):82--106, 1979.

\bibitem{Federer}
H.~Federer.
\newblock {\em Geometric measure theory}, volume Band 153 of {\em Die
  Grundlehren der mathematischen Wissenschaften}.
\newblock Springer-Verlag New York, Inc., New York, 1969.

\bibitem{FX17}
H.-P. Fu and L.-Q. Xiao.
\newblock Einstein manifolds with finite {$L^p$}-norm of the {W}eyl curvature.
\newblock {\em Differential Geom. Appl.}, 53:293--305, 2017.

\bibitem{GibbonsHartnoll}
G.~Gibbons and S.~A. Hartnoll.
\newblock Gravitational instability in higher dimensions.
\newblock {\em Phys. Rev. D (3)}, 66(6):064024, 17, 2002.

\bibitem{GibbonsHartnollPope}
G.~W. Gibbons, S.~A. Hartnoll, and C.~N. Pope.
\newblock Bohm and {E}instein-{S}asaki metrics, black holes, and cosmological
  event horizons.
\newblock {\em Phys. Rev. D (3)}, 67(8):084024, 24, 2003.

\bibitem{GrahamLee}
C.~R. Graham and J.~M. Lee.
\newblock Einstein metrics with prescribed conformal infinity on the ball.
\newblock {\em Adv. Math.}, 87(2):186--225, 1991.

\bibitem{Gross}
D.~J. Gross, M.~J. Perry, and L.~G. Yaffe.
\newblock Instability of flat space at finite temperature.
\newblock {\em Phys. Rev. D (3)}, 25(2):330--355, 1982.

\bibitem{GubserMitra}
S.~S. Gubser and I.~Mitra.
\newblock Some interesting violations of the {B}reitenlohner-{F}reedman bound.
\newblock {\em J. High Energy Phys.}, (7):044, 15, 2002.

\bibitem{Gursky98}
M.~J. Gursky.
\newblock The {W}eyl functional, de {R}ham cohomology, and
  {K}\"ahler-{E}instein metrics.
\newblock {\em Ann. of Math. (2)}, 148(1):315--337, 1998.

\bibitem{Gursky99}
M.~J. Gursky.
\newblock Conformal vector fields on four-manifolds with negative scalar
  curvature.
\newblock {\em Math. Z.}, 232(2):265--273, 1999.

\bibitem{Gursky200}
M.~J. Gursky.
\newblock Four-manifolds with {$\delta W^+=0$} and {E}instein constants of the
  sphere.
\newblock {\em Math. Ann.}, 318(3):417--431, 2000.

\bibitem{GurskyLeBrun}
M.~J. Gursky and C.~Lebrun.
\newblock On {E}instein manifolds of positive sectional curvature.
\newblock {\em Ann. Glob. Anal. Geom.}, 17(4):315--328, 1999.

\bibitem{HallHaslhoferSiepmann}
S.~Hall, R.~Haslhofer, and M.~Siepmann.
\newblock The stability inequality for {R}icci-flat cones.
\newblock {\em J. Geom. Anal.}, 24(1):472--494, 2014.

\bibitem{HebeySob}
E.~Hebey.
\newblock {\em Sobolev spaces on {R}iemannian manifolds}, volume 1635 of {\em
  Lecture Notes in Mathematics}.
\newblock Springer-Verlag, Berlin, 1996.

\bibitem{HebeyVaugon}
E.~Hebey and M.~Vaugon.
\newblock Effective {$L_p$} pinching for the concircular curvature.
\newblock {\em J. Geom. Anal.}, 6(4):531--553, 1996.

\bibitem{Huisken}
G.~Huisken.
\newblock Ricci deformation of the metric on a {R}iemannian manifold.
\newblock {\em J. Diff. Geom.}, 21(1):47--62, 1985.

\bibitem{Itoh}
M.~Itoh.
\newblock Self-duality of {K}\"ahler surfaces.
\newblock {\em Compos. Math.}, 51(2):265--273, 1984.

\bibitem{ItohKobayashi}
M.~Itoh and D.~Kobayashi.
\newblock Isolation theorems of the {B}ochner curvature type tensors.
\newblock {\em Tokyo J. Math.}, 27(1):227--237, 2004.

\bibitem{KobayashyConf}
O.~Kobayashi.
\newblock On a conformally invariant functional of the space of {R}iemannian
  metrics.
\newblock {\em J. Math. Soc. Japan}, 37(3):373--389, 1985.

\bibitem{Koiso78}
N.~Koiso.
\newblock Non-deformability of {E}instein metrics.
\newblock {\em Osaka Math. J.}, 15(2):419--433, 1978.

\bibitem{Koiso80}
N.~Koiso.
\newblock Rigidity and stability of {E}instein metrics---the case of compact
  symmetric spaces.
\newblock {\em Osaka Math. J.}, 17(1):51--73, 1980.

\bibitem{SharpSobolev}
A.~Krist\'aly.
\newblock Sharp {S}obolev inequalities on noncompact {R}iemannian manifolds
  with {${\rm Ric}\ge 0$} via optimal transport theory.
\newblock {\em Calc. Var. Part. Differ. Equ.}, 63(8):Paper No. 200, 25, 2024.

\bibitem{Kroncke15}
K.~Kr\"oncke.
\newblock On the stability of {E}instein manifolds.
\newblock {\em Ann. Glob. Anal. Geom.}, 47(1):81--98, 2015.

\bibitem{Kroncke17}
K.~Kr\"oncke.
\newblock Stable and unstable {E}instein warped products.
\newblock {\em Trans. Amer. Math. Soc.}, 369(9):6537--6563, 2017.

\bibitem{KronckeSzabo}
K.~Kr\"oncke and A.~Szab\'o.
\newblock Optimal coordinates for {R}icci-flat conifolds.
\newblock {\em Calc. Var. Part. Differ. Equ.}, 63(7):Paper No. 188, 41, 2024.

\bibitem{Kronheimer}
P.~B. Kronheimer.
\newblock The construction of {ALE} spaces as hyper-{K}\"ahler quotients.
\newblock {\em J. Diff. Geom.}, 29(3):665--683, 1989.

\bibitem{kroencke2024stabilityscalarcurvaturerigidity}
K.~Kröncke and U.~Semmelmann.
\newblock On stability and scalar curvature rigidity of quaternion-kähler
  manifolds.
\newblock {\em Ann. Sc. Norm. Super. Pisa Cl. Sci.}, 2025.

\bibitem{LeBrun2015}
C.~LeBrun.
\newblock Weyl curvature, {D}el {P}ezzo surfaces, and almost-{K}\"ahler
  geometry.
\newblock {\em J. Geom. Anal.}, 25(3):1744--1772, 2015.

\bibitem{LeeSpectrum}
J.~M. Lee.
\newblock The spectrum of an asymptotically hyperbolic {E}instein manifold.
\newblock {\em Comm. Anal. Geom.}, 3(1-2):253--271, 1995.

\bibitem{Lee1}
J.~M. Lee.
\newblock Fredholm operators and {E}instein metrics on conformally compact
  manifolds.
\newblock {\em Mem. Amer. Math. Soc.}, 183(864):vi+83, 2006.

\bibitem{LeeParker}
J.~M. Lee and T.~H. Parker.
\newblock The {Y}amabe problem.
\newblock {\em Bull. Amer. Math. Soc. (N.S.)}, 17(1):37--91, 1987.

\bibitem{Mazzeo}
R.~Mazzeo.
\newblock Unique continuation at infinity and embedded eigenvalues for
  asymptotically hyperbolic manifolds.
\newblock {\em Amer. J. Math.}, 113(1):25--45, 1991.

\bibitem{Muto}
Y.~Muto.
\newblock On {E}instein metrics.
\newblock {\em J. Diff. Geom.}, 9:521--530, 1974.

\bibitem{Nakajima}
H.~Nakajima.
\newblock Self-duality of {ALE} {R}icci-flat {$4$}-manifolds and positive mass
  theorem.
\newblock In {\em Recent topics in differential and analytic geometry}, volume
  18-{\rm I} of {\em Adv. Stud. Pure Math.}, pages 385--396. Academic Press,
  Boston, MA, 1990.

\bibitem{obata}
M.~Obata.
\newblock The conjectures on conformal transformations of {R}iemannian
  manifolds.
\newblock {\em J. Diff. Geom.}, 6:247--258, 1971/72.

\bibitem{PagePope1}
D.~N. Page and C.~N. Pope.
\newblock Stability analysis of compactifications of {$D=11$}\ supergravity
  with {${\rm SU}(3)\times {\rm SU}(2)\times {\rm U}(1)$}\ symmetry.
\newblock {\em Phys. Lett. B}, 145(5-6):337--341, 1984.

\bibitem{PagePope2}
D.~N. Page and C.~N. Pope.
\newblock Which compactifications of {$D=11$}\ supergravity are stable?
\newblock {\em Phys. Lett. B}, 144(5-6):346--350, 1984.

\bibitem{Rosenberg}
S.~Rosenberg.
\newblock On the {G}auss-{B}onnet theorem for complete manifolds.
\newblock {\em Trans. Amer. Math. Soc.}, 287(2):745--753, 1985.

\bibitem{Singer}
M.~A. Singer.
\newblock Positive {E}instein metrics with small {$L^{n/2}$}-norm of the {W}eyl
  tensor.
\newblock {\em Diff. Geom. Appl.}, 2(3):269--274, 1992.

\bibitem{Tachibana}
S.~Tachibana.
\newblock On the {B}ochner curvature tensor.
\newblock {\em Natur. Sci. Rep. Ochanomizu Univ.}, 18:15--19, 1967.

\bibitem{tachibana2}
S.~Tachibana.
\newblock A theorem on {R}iemannian manifolds of positive curvature operator.
\newblock {\em Proc. Japan Acad.}, 50:301--302, 1974.

\bibitem{Tran}
H.~Tran.
\newblock On closed manifolds with harmonic {W}eyl curvature.
\newblock {\em Adv. Math.}, 322:861--891, 2017.

\bibitem{TricerriVanhecke}
F.~Tricerri and L.~Vanhecke.
\newblock Curvature tensors on almost {H}ermitian manifolds.
\newblock {\em Trans. Amer. Math. Soc.}, 267(2):365--397, 1981.

\bibitem{Wang91}
M.~Y. Wang.
\newblock Preserving parallel spinors under metric deformations.
\newblock {\em Indiana Univ. Math. J.}, 40(3):815--844, 1991.

\bibitem{Warnick}
C.~Warnick.
\newblock Semi-classical stability of {A}d{S} {NUT} instantons.
\newblock {\em Class. Quantum Grav.}, 23(11):3801--3817, 2006.

\bibitem{Zhu}
C.~Zhu.
\newblock The gauss-bonnet theorem and its applications.
\newblock {\em University of California, Berkeley, USA}, 2000.

\end{thebibliography}
\end{document}